\documentclass[11pt,reqno]{amsart}
\usepackage{amsmath,amssymb,amsthm,mathrsfs}
\usepackage{tikz}
\usepackage{enumitem}

\usetikzlibrary{arrows.meta}
\usetikzlibrary{decorations.pathreplacing}

\usepackage[margin=1.1in]{geometry}
\usepackage[hidelinks]{hyperref}
\hypersetup{
  pdftitle={Height of minimal surfaces: Sharp thresholds},
  pdfauthor={Tobias Holck Colding and William P. Minicozzi II},
  pdfsubject={Minimal surfaces and proper holomorphic curves in cones}
}

\newtheorem{theorem}{Theorem}[section]
\newtheorem{proposition}[theorem]{Proposition}
\newtheorem{lemma}[theorem]{Lemma}
\newtheorem{corollary}[theorem]{Corollary}
\theoremstyle{definition}
\newtheorem{remark}[theorem]{Remark}

\newtheorem{definition}[theorem]{Definition}

\newcommand{\cA}{\mathcal{A}}
\newcommand{\R}{\mathbb{R}}
\newcommand{\C}{\mathbb{C}}
\newcommand{\CP}{\mathbb{CP}}
\newcommand{\N}{\mathbb{N}}
\newcommand{\D}{\mathbb{D}}

\newcommand{\cH}{\mathcal{H}}
\DeclareMathOperator{\Rot}{Rot}
\DeclareMathOperator{\spt}{spt}
\newcommand{\Pin}{\Pi}
\newcommand{\Pout}{\Pi^{\perp}}
\DeclareMathOperator{\Area}{Area}
\newcommand{\Vol}{\operatorname{Vol}}

\DeclareMathOperator{\e}{e}
\newcommand{\Tr}{{\text{Tr}}}
\DeclareMathOperator{\Real}{Re}

\newcommand{\ddc}{d\hspace{0.5pt}d^{c}}

\newcommand{\Pbot}{P^{\perp}}
\newcommand{\Cone}[1]{C_{#1}(\Pi)}
\newcommand{\Slab}[2]{S_{#1,#2}(\Pi)}

\DeclareMathOperator{\arcosh}{arcosh}

\begin{document}

\title[Height of minimal surfaces]{Height of minimal surfaces: sharp thresholds}

\author{Tobias Holck Colding}
\address{Department of Mathematics, MIT,
77 Massachusetts Avenue, Cambridge, MA 02139}
\email{colding@math.mit.edu and minicozz@math.mit.edu}

\author{William P. Minicozzi II}

\thanks{The authors were partially supported by NSF DMS Grants 2405393 and 2304684.}
\subjclass[2020]{Primary 53A10; Secondary 32H02, 32C25}
\keywords{Minimal surface, tilt-excess, proper holomorphic curve, linear cone, K\"ahler angle, area growth,
affine algebraic curve, stationary varifold}

\begin{abstract}
The confinement theorem of \cite{CM1} gives Euclidean volume growth for a proper minimal submanifold whose height over a plane is sublinear.   Such volume growth places strong restrictions on the geometry.  This leaves linear height as the natural borderline and raises the question of which, if any, linearly growing cones still force Euclidean  growth.

Here we determine this borderline sharply for proper holomorphic curves in $\C^2$.  There are two sharp thresholds for the aperture depending on the K\"ahler angle $\theta$ of the two-plane $\Pin$: $\tan(\theta/2)$ and $\cot(\theta/2)$. No proper holomorphic curve exists below the first, every such curve is algebraic below the second, and superquadratic proper holomorphic disks exist above the second.   
These conclusions extend to holomorphic submanifolds of arbitrary dimension and codimension.  

For stationary integral varifolds there are again three regimes: sublinear height
forces Euclidean growth, narrow cones of aperture $\varepsilon$ force polynomial
growth with exponent $n+O(\varepsilon^2)$, and wide cones allow arbitrarily fast
growth.  The order $\varepsilon^2$ is sharp: already in $\R^3$, for every small
$\varepsilon>0$ there are examples in a cone of aperture $\varepsilon$ whose
volume grows at least like $r^{2+\frac45\varepsilon^2}$.

\end{abstract}

\maketitle
\enlargethispage{2pt}

\setcounter{section}{-1}
\section{Introduction}

Solutions of elliptic equations tend to spread out evenly, and confining them
imposes strong restrictions.  Two classical linear examples illustrate this
particularly well.  The first is Liouville's theorem: a positive harmonic
function on $\R^n$ is constant, and the Harnack inequality is its local
quantitative form.  The second is the dichotomy for holomorphic functions: an
entire function of polynomial growth is a polynomial and takes each value
finitely often, while a transcendental one takes every value, with at most one
exception, infinitely often.  Equivalently, an entire function is a polynomial
exactly when its graph in $\C^2$ has Euclidean area growth, which is the
simplest case of the Bishop--Stoll theorem discussed below.

The same principle appears, more subtly, for nonlinear equations.  On a minimal
surface the coordinate functions are harmonic, but for the induced metric, which
is itself unknown.  The Hoffman-Meeks strong halfspace theorem \cite{HM} is a nonlinear
Liouville theorem: a properly immersed minimal surface in $\R^3$ on which the
height $x_3$ is positive must be a plane.  For embedded surfaces, a local
quantitative form, playing the role of the Harnack inequality, is the one-sided
curvature estimate \cite{CMA1,CMA2,CMA3,CMA4}: an embedded minimal disk that lies on one
side of a plane and comes close to it has bounded curvature near the plane.
This estimate is a key tool in the theory of embedded minimal surfaces in
$\R^3$, which is by now quite well understood, 
  \cite{BB1,BB2,CMbook,CMA4,MP,P2}.

Unlike Liouville's theorem, however, halfspace rigidity fails in higher
dimension and codimension: already in codimension one, the higher-dimensional
catenoid and many other examples are nonflat complete properly embedded minimal
hypersurfaces contained in a slab \cite{Blair}.  What survives is quantitative:
by \cite{CM1}, a properly immersed minimal submanifold, of any dimension and
codimension, whose height over a plane grows sublinearly has Euclidean volume
growth.  This growth bound has strong geometric consequences.  At large scales,
compactness and monotonicity imply that every sequence of blow-downs has a
subsequence converging to a minimal cone, thereby producing a tangent cone at
infinity.  In $\R^3$ the consequences are even more rigid: among properly
embedded minimal surfaces of finite genus, quadratic area growth is equivalent
to finite total curvature, and the latter class has long been understood through
the classical theory \cite{JM,Os}.  The first case not covered by \cite{CM1} is
linear height, that is, confinement to a cone around the plane.  For holomorphic
curves in $\C^2$ the two classical examples meet: they are minimal
surfaces,\footnote{Holomorphic submanifolds of $\C^m$ are area-minimizing by
Wirtinger \cite{W,HL}, and thus minimal and stable.} and by Bishop--Stoll
\cite{Bishop,S} they are algebraic exactly when their area growth is Euclidean,
so for them the question becomes which cones force algebraicity.

For proper holomorphic curves in $\R^4=\C^2$ we answer this sharply: there are
two thresholds for the cone aperture, depending on the K\"ahler angle of the
plane.  Below the first, no such curve exists; below the second, every such curve
is algebraic, with quadratic area growth; and above the second, examples with
superquadratic area growth exist.

Holomorphic curves in $\C^2$ are an exception to the failure of halfspace
rigidity in higher codimension: Theorem~2 of \cite{SW} implies that a
nonconstant proper holomorphic curve contained in a real halfspace has affine
complex-line image.  In particular, no proper holomorphic disk can be contained
in a real halfspace.  This conclusion is special to $\C^2$: if $G:\D\to\C^2$ is
any proper holomorphic map from the disk $\D=\{z\in\C:|z|<1\}$, then
\[
   \zeta\longmapsto\bigl(G(\zeta),1+\zeta\bigr)
\]
is a proper holomorphic disk in $\C^3$ contained in the halfspace
$\{\operatorname{Re}z_3>0\}$.  The examples below instead rely essentially on the
fact that the cones $C_\varepsilon(\Pin)\subset\C^2$ are nonconvex.  A new proof
of the halfspace theorem in \cite{CMdistance} makes clear that its rigidity is
closely tied to the parabolicity of the two-dimensional plane, and this
distinction reappears here: the holomorphic curves in the rigidity range are
parabolic.

 For stationary integral varifolds, the weak solutions of the minimal surface
equation, there are again three regimes, now in every dimension and codimension
(Theorem~\ref{t:realsetting}).  Sublinear height forces Euclidean volume growth
\cite{CM1}; in a sufficiently narrow cone of aperture $\varepsilon$ the volume
grows at most like $r^{n+c_n\varepsilon^2}$, which approaches the Euclidean rate
as the cone closes up; and in wide cones the growth can be arbitrarily fast.  Two families of examples show what cannot be improved.  Sums of
scaled catenoids with large multiplicities lie in the cone of aperture
$\max_{t\geq0}t/\cosh t\approx0.66$ and grow arbitrarily fast
(Theorem~\ref{t:catenoid-sum}).  At the other end, and unlike for holomorphic curves, linear
height never forces Euclidean growth: already in $\R^3$, for every small
$\varepsilon>0$ there is a stationary integral $2$-varifold, made of flat pieces
and pieces of catenoids that meet along circles, which lies in a cone of
aperture $\varepsilon$ and has volume growth at least
$r^{2+\frac45\varepsilon^2}$ (Theorem~\ref{t:varifold} and
Figure~\ref{f:varifold3d}).  Thus in the middle regime the growth can be
genuinely faster than Euclidean, and the dependence of the exponent on
$\varepsilon^2$ is sharp.

\subsection{Holomorphic threshold}  The implications of confinement depend on the type of plane. 
Fix a constant orthogonal complex structure $J$ on $\R^4$ and identify $(\R^4,J)$ with $\C^2$.
A two-plane $\Pin$ is \emph{complex} if $J \Pin = \Pin$; $\Pin$ is 
 \emph{Lagrangian} if $J\Pin=\Pin^\perp$ or, equivalently, the K\"ahler form
$\omega(x,y)=\langle Jx,y\rangle$ vanishes on $\Pin$. 
 The \emph{K\"ahler
angle} $\theta=\theta(\Pin)\in[0,\pi/2]$ is determined by
$\cos\theta=|\omega(e_1,e_2)|$ for an orthonormal basis of $\Pin$.  Thus $\theta=0$  when $\Pin$ 
is a complex line, while $\theta=\pi/2$  for a Lagrangian plane.

 Let $P$ and $P^\perp$ denote the
orthogonal projections onto $\Pin$ and $\Pin^\perp$, respectively.  For $\varepsilon,K\geq0$, set (see Figure \ref{f:bounded-error-cone})
\[
 S_{\varepsilon,K}(\Pin)
 =\{x\in\R^4:|P^\perp x|\leq\varepsilon|Px|+K\},
 \qquad C_\varepsilon(\Pin)=S_{\varepsilon,0}(\Pin).
\]
We will refer to $\varepsilon$ as the aperture of the cone.
\begin{figure}[ht]
\centering
\begin{tikzpicture}[
   x=1.15cm,
   y=1.15cm,
   scale=.7,
   every node/.style={font=\small},
   line cap=round,
   line join=round
]
   \coordinate (O) at (0,0);
   \def\xmax{3.45}
   \def\slope{0.42}
   \def\offset{0.68}

   \fill[black!10]
      (-\xmax,{\slope*\xmax+\offset})
      --(0,\offset)
      --(\xmax,{\slope*\xmax+\offset})
      --(\xmax,{-\slope*\xmax-\offset})
      --(0,-\offset)
      --(-\xmax,{-\slope*\xmax-\offset})
      --cycle;

   \draw[->,thick] (-3.72,0)--(3.78,0)
      node[below right] {$\Pin$};
   \draw[->,thick] (0,-2.30)--(0,2.36)
      node[above] {$\Pin^\perp$};

   \draw[thick]
      (-\xmax,{\slope*\xmax+\offset})
      --(0,\offset)
      --(\xmax,{\slope*\xmax+\offset});
   \draw[thick]
      (-\xmax,{-\slope*\xmax-\offset})
      --(0,-\offset)
      --(\xmax,{-\slope*\xmax-\offset});

   \draw[densely dashed,black!65]
      (-\xmax,{\slope*\xmax})--(O)--(\xmax,{\slope*\xmax});
   \draw[densely dashed,black!65]
      (-\xmax,{-\slope*\xmax})--(O)--(\xmax,{-\slope*\xmax});

   \draw[<->,thin] (.25,.04)--(.25,{\offset-.04})
      node[midway,right=2pt] {$K$};

   \node at (2.25,1.58) {$S_{\varepsilon,K}(\Pin)$};
   \node at (2.45,.43) {$C_\varepsilon(\Pin)$};
   \fill (O) circle (1.2pt);
\end{tikzpicture}
\caption{The dashed lines bound the cone
$C_\varepsilon(\Pin)$ with aperture $\varepsilon$.  The cone is contained in $S_{\varepsilon,K}(\Pin)$ where the constant $K$ permits a bounded
transverse error.}
\label{f:bounded-error-cone}
\end{figure}
 For a map
$F:\Sigma\to\R^n$, write
\[
 A_F(R)=\Area\bigl(F^{-1}(B_R(0))\bigr),
\]
counted
with multiplicity; when the map is clear, we abbreviate this to $A(R)$.  Recall that an \emph{affine
algebraic curve} in $\C^2$ is the zero set of a nonconstant
polynomial in two complex variables.

\vskip1mm

The next theorem gives three regimes for nonconstant proper
holomorphic curves.

\begin{figure}[ht]
\centering
\begin{tikzpicture}[
    line width=0.6pt,
    >={Stealth[length=2.6mm,width=2mm]},
    every node/.style={font=\small}
  ]

  \def\angHi{35}      
  \def\angLo{19}      
  \def\Lhi{7.4}       
  \def\Llo{6.6}       

  \fill[gray!12] (0,0) -- (\angHi:\Lhi) -- (0, {\Lhi*sin(\angHi)}) -- cycle;      
  \fill[gray!25] (0,0) -- (\angHi:\Lhi) -- (\angLo:\Llo) -- cycle;             
  \fill[gray!6]  (0,0) -- (\angLo:\Llo) -- ({\Llo*cos(\angLo)},0) -- cycle;      
  
  \draw[->] (0,0) -- (8.0,0) node[pos=0.88, below=5pt] {$\Pi$};
  \draw[->] (0,0) -- (0,5.4)  node[pos=0.92, left=5pt]  {$\Pi^{\perp}$};
  
  \draw (0,0) -- (\angHi:\Lhi) node[right, xshift=2pt] {$\cot\dfrac{\theta}{2}$};
  \draw (0,0) -- (\angLo:\Llo) node[right, xshift=2pt] {$\tan\dfrac{\theta}{2}$};

  \path (0,0) -- (\angHi:\Lhi)
        node[sloped, above, pos=0.52] {Rapid growth examples};
  \path (0,0) -- (\angLo:\Llo)
        node[sloped, above, pos=0.70] {Algebraic, quadratic growth};
  \path (0,0) -- (6.2,0)
        node[above, pos=0.62] {Nonexistence};

\end{tikzpicture}
\caption{The three \emph{holomorphic} aperture regimes.  The thresholds are determined by the K\"ahler angle $\theta$ of the two-plane $\Pi$.}
\label{fig:kahler-angle-regions}
\end{figure}

\begin{theorem}[Holomorphic thresholds]	\label{t:angleversion}
We have the three regimes (see Figure \ref{fig:kahler-angle-regions}):
\begin{enumerate}
\item[(i)] If $\varepsilon<\tan(\theta/2)$, then no holomorphic curve is contained in
$S_{\varepsilon,K}(\Pin)$ for any $K<\infty$.
\item[(ii)] If $\tan(\theta/2)\leq\varepsilon<\cot(\theta/2)$, then such curves exist, but every
one has affine algebraic image and quadratic area growth, and its domain is parabolic.
\item[(iii)] If $\varepsilon>\cot(\theta/2)$, then there are complete proper holomorphic
immersions of the disk into $C_\varepsilon(\Pin)$ whose area grows arbitrarily fast.
\end{enumerate}
\end{theorem}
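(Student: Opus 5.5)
The first step is a normal form.  After a unitary change of coordinates $(u,v)$ on $\C^2$, a plane of K\"ahler angle $\theta$ can be written as $\Pin=\{(z,t\bar z):z\in\C\}$ with $t=\tan(\theta/2)\in[0,1]$.  On $\Pin$ we have $\omega=\tfrac{1-t^2}{1+t^2}\,dA$, so $\cos\theta=\tfrac{1-t^2}{1+t^2}$.  The complement is $\Pin^\perp=\{(-t\bar w,w)\}$, and one computes $Px\leftrightarrow (u+t\bar v)/(1+t^2)$ and $P^\perp x\leftrightarrow (v-t\bar u)/(1+t^2)$, with equal normalizing factors.  Thus $x=(u,v)\in S_{\varepsilon,K}(\Pin)$ becomes
\[
|v-t\bar u|\le \varepsilon\,|u+t\bar v|+K' .
\]
The triangle inequality turns this into two one-sided comparisons.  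The first is $(1-\varepsilon t)|v|\le(t+\varepsilon)|u|+K'$, which is useful when $\varepsilon<1/t=\cot(\theta/2)$.  The second is $(t-\varepsilon)|u|\le(1+\varepsilon t)|v|+K'$, which is useful when $\varepsilon<t$.

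For (ii), assume $\varepsilon<\cot(\theta/2)$ and let $F=(u,v):\Sigma\to\C^2$ be a proper holomorphic curve in $S_{\varepsilon,K}(\Pin)$.  Then $|v|\le C|u|+C$ on $\Sigma$, so $|F|\to\infty$ forces $|u|\to\infty$ and $u:\Sigma\to\C$ is proper.  A proper nonconstant holomorphic map between Riemann surfaces is a finite branched cover, say of degree $d$.  The elementary symmetric functions of the values of $v$ over the fibers of $u$ are entire functions of $u$, and they grow at most polynomially (like $|u|^k$).  By Liouville they are polynomials, so $F(\Sigma)$ lies in the affine algebraic curve $\prod_j(v-v_j(u))=0$.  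By Bishop--Stoll, or directly by Wirtinger since the area is $\int F^*\omega$ over a $d$-sheeted cover of a graph of linear growth, the area growth is quadratic.  The domain is a compact Riemann surface minus finitely many points, the poles of $u$, and is therefore parabolic.  Existence for $\varepsilon\ge t$ is witnessed by the complex line $\{v=0\}$: there the condition reads $t|u|\le\varepsilon|u|$.

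For (i), assume $\varepsilon<t$.  Both comparisons now hold, so by the previous paragraph the curve is algebraic.  At each puncture we can write $u\sim a s^{-p}$ and $v\sim b s^{-q}$, and comparability of $|u|$ and $|v|$ forces $p=q$, so $v=cu+o(|u|)$ with $c\ne0$ or $c=0$.  Along such an end $u$ takes all arguments.  Dividing by $|u|$ and letting $|u|\to\infty$, the constraint becomes
\[
|c\,e^{2i\alpha}-t|\le\varepsilon\,|1+t\bar c\,e^{-2i\alpha}|\qquad\text{for all }\alpha .
\]
Evaluating at the $\alpha$ that maximizes the left side gives $|c|+t\le \varepsilon(1+t|c|)$, which is impossible when $\varepsilon<t\le1$ (the case $c=0$ gives directly $t\le\varepsilon$).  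Hence no curve exists.  The main care needed is in the bookkeeping of the lower-order terms and of the constant $K$, but these vanish after dividing by $|u|$.

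For (iii), $\varepsilon>\cot(\theta/2)$ means $\varepsilon t>1$, so the interior of $C_\varepsilon(\Pin)$ contains a solid complex cone $\{|u|<\delta|v|\}$ around the complex line $\{u=0\}$.  That cone alone cannot contain a proper disk, since $v$ would then be a proper map $\D\to\C$.  The construction must therefore genuinely use the nonconvex part of the cone where $|u|$ is comparable to or larger than $|v|$, with the phase relation between $v$ and $\bar u$ keeping $|v-t\bar u|$ small.  My plan is an inductive Runge/Mergelyan-type (Forstneri\v{c}--Globevnik style) exhaustion.  I would build an increasing sequence of holomorphic disks that stay inside $C_\varepsilon(\Pin)$ and push the boundary outward through the cone, alternating between the $v$-direction region and the nonconvex wedges.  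At each stage I would add large spiraling or highly branched pieces to force the area in $B_{R_k}$ above any prescribed function, while controlling the error so that the limit remains proper, complete and inside the cone.  This step is the main obstacle: keeping the approximations inside a nonconvex, non-pseudoconvex-looking conical region while ensuring properness.
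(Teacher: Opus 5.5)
Your arguments for (i) and (ii) are correct, but they take a different, more classical route than the paper. In your normal form $\Pin=\{(z,t\bar z)\}$ with $t=\tan\frac\theta2$, the coordinate $u$ is the paper's $z_-$. You then argue as follows:
\begin{itemize}
\item algebraicity comes from the finite branched cover $u$, via symmetric functions and Liouville (essentially Rudin's criterion);
\item parabolicity comes from $\Sigma$ being a compact surface minus finitely many points;
\item nonexistence below $\tan\frac\theta2$ comes from the asymptotics $v=cu+o(|u|)$ at the punctures.
\end{itemize}
The paper instead proves (i) from the pointwise tilt-excess bound $E_\Pin\ge\sin^2\frac\theta2$ via the minimum principle for $\rho_\kappa$ (Proposition~\ref{p:tangent-obstruction}). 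It proves (ii) through the local, scale-by-scale doubling Theorem~\ref{t:doubling} (constant flux of $\log|z_-\circ F|$ plus a Caccioppoli bound), followed by Khas'minskii and Stoll. Your route is global and uses only the easy direction of Bishop--Stoll; the paper's gives a quantitative doubling estimate.

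Two small repairs are needed. First, at the maximizing $\alpha$ the right side is really $\varepsilon\,|1-t|c||\le\varepsilon(1+t|c|)$, so your conclusion stands. Second, since $A_F$ counts multiplicity, note that $F$ is at most $d$-to-one onto its image (because $u$ factors through $F$) before quoting Bishop--Stoll.

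The genuine gap is (iii). You describe an exhaustion scheme but give no mechanism that keeps the disks in $C_\varepsilon(\Pin)$ while their boundaries go to infinity. Runge/Mergelyan approximation alone cannot supply one: by (ii) any such scheme must fail for $\varepsilon<\cot\frac\theta2$, so the mechanism has to detect the threshold.

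The missing idea is that $\varepsilon>\cot\frac\theta2$ is exactly the condition for $\rho=|Px|^2-\varepsilon^{-2}|P^\perp x|^2$ to be \emph{strictly plurisubharmonic}; the cone is the superlevel set $\{\rho\ge0\}$ of this function. In your own coordinates
\[
(1+t^2)\rho=(1-t^2\varepsilon^{-2})|u|^2+(t^2-\varepsilon^{-2})|v|^2+2t(1+\varepsilon^{-2})\operatorname{Re}(uv),
\]
and the last term is pluriharmonic. Hence $\rho$ is strictly psh iff $\varepsilon t>1$ (compare Corollary~\ref{l:bochner}).

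This is precisely what the Forstneri\v{c}--Globevnik Riemann--Hilbert method needs. The boundary of the disk is lifted along small attached holomorphic disks so that $\rho$ rises by a definite factor on $\partial\D$, while inside it drops by at most a summable amount. The result is a proper disk with $\rho\circ F\to\infty$ at $\partial\D$ and $\rho\circ F$ bounded below (Theorem~\ref{t:FG}). The paper applies this for a Lagrangian plane with $\varepsilon>1$ and then moves to general $\theta$ with the complex-linear map of Lemma~\ref{l:transport}.

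Your sketch also leaves the immersion and growth claims open. The paper builds the disk $G$ in a slightly narrower cone and uses $G+\zeta\xi$, with $|\xi|$ small and $-\xi\notin G'(\D)$; this is an immersion that still lies in $C_\varepsilon(\Pin)$. For arbitrarily fast growth it inserts at each stage a packet $\delta\zeta^Na$ (Lemma~\ref{l:packet}). This packet is small in $C^0(\overline\D)$ and in $C^1$ on the previous inner disk, yet $\|q_N'\|^2_{L^2(D_{1-1/N})}=\pi\delta^2N(1-\frac1N)^{2N}\to\infty$. The ``highly branched pieces'' you suggest would work against the immersion requirement.
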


When $\theta=0$ and the plane is complex, the upper threshold is infinite and the third regime is absent.
When $\theta = \frac{\pi}{2}$, then $\cot \frac{\theta}{2} = \tan \frac{\theta}{2}$ and the two thresholds coincide.

The same picture extends to arbitrary complex dimension and codimension.  Suppose that $X$ has complex
dimension $q$ and $F:X\to\C^N$ is a proper holomorphic map of generic rank $q$.  Let
$\Pin\subset\C^N$ be a real $2q$-plane with K\"ahler angles
$0\leq\theta_1\leq\cdots\leq\theta_q\leq\pi/2$.  Nonexistence holds below
$\tan(\theta_1/2)$, while confinement below $\cot(\theta_q/2)$ forces the image to be affine
algebraic with Euclidean volume growth.  When $\theta_1>0$, super-Euclidean 
holomorphic immersions of $\D^q$ exist. 

The examples from part  (iii) in Theorem \ref{t:angleversion}
 lie at the borderline of the confinement results of \cite{CM1}.
Sublinearly growing height forces Euclidean volume growth in every dimension and codimension by 
\cite{CM1}, but (iii) in Theorem \ref{t:angleversion} shows that linear height need not do so.  
The broader dichotomy between confinement and volume growth is discussed in \cite{CM2}. 
  Related Liouville and Bernstein rigidity for
properly immersed minimal disks with quadratic area growth is proved in \cite{CM3}.  Proper
minimal surfaces with exponential or Gaussian area growth, without the present cone confinement,
are constructed in \cite{CMM}.

There is an additional reason why Euclidean volume growth is especially natural
in the holomorphic setting: it is closely tied to algebraicity.  Complex
analytic subvarieties are   area minimizing, and
the bridge between analysis and algebra is anchored by two foundational
results.  Chow's theorem of 1949 \cite{Chow} states that every closed complex
analytic subset of complex projective space $\CP^n$ is algebraic, and hence is
globally defined by a system of homogeneous polynomials.  In affine space
$\C^n$, however, compactness is lost, and closed complex analytic varieties can
exhibit highly transcendental behavior at infinity, as illustrated by the
graph $\{w=e^z\}\subset\C^2$.  The Bishop--Stoll theorem of 1964
\cite{Bishop,S} identifies the precise analytic replacement for compactness: a
closed complex analytic variety in $\C^n$ is algebraic if and only if it has
Euclidean volume growth.

\subsection{Holomorphic doubling}
The motivating narrow-cone question is whether a complete properly immersed minimal surface
contained in $S_{\varepsilon,K}(\Pin)$ for some $K<\infty$ and sufficiently small $\varepsilon$
must have quadratic area growth.  Part (ii) in Theorem \ref{t:angleversion} answers this sharply for
holomorphic curves, and in fact gives algebraicity.

The mechanism behind the quadratic-growth regime parallels the volume estimates in \cite{CM1}. The key input is a doubling theorem that converts geometric confinement into quantitative control: if, at a given scale, a holomorphic curve is confined to a cone whose aperture lies below the superquadratic threshold, then its volume at twice that scale is bounded by a controlled multiple of its volume at the original scale. We state this theorem next.  

Let $F:\Sigma\to(\R^4,J)=\C^2$ be a   holomorphic map from a  
Riemann surface $\Sigma$ and  $\Pin$  a two-plane with K\"ahler angle $\theta$. 

\begin{theorem}[Holomorphic doubling]	\label{t:doubling}
Given $\varepsilon  < \cot \frac{\theta}{2}$ and $K$, there exist $R_1$, $\Omega$, $\omega$ and $C$ so that if  $R > 4\, R_1$, $F^{-1} (B_{\Omega \, R})$ is compact and disjoint from $\partial \Sigma$, and
$$
	B_{\Omega \, R} \cap F(\Sigma) \subset S_{\varepsilon , K} (\Pin) \, , 
$$
then $A_F (R) \le C \, R^2 \, A_F (\omega \, R_1)$.
\end{theorem}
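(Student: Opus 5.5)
The plan is to turn confinement in the cone into a differential inequality for area, using a quadratic form on $\R^4$ that is subharmonic on every holomorphic curve. For a real quadratic form $Q(x)=\langle Ax,x\rangle$ and a holomorphic curve $\Sigma$, we have $\Delta_\Sigma Q=2\,\Tr_{T_x\Sigma}A$. The trace of $A$ over a complex line $L$ depends only on the $J$-invariant part $\frac12(A-JAJ)$. I would take
\[
  w(x)=|Px|^2-\lambda|P^\perp x|^2 .
\]
The first step is linear algebra. Use a unitary normal form for $\Pin$, namely $\Pin=\mathrm{span}\{e_1,\cos\theta\,Je_1+\sin\theta\,e_2\}$ with $e_1,e_2$ a unitary basis. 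Then compute $\min_L \Tr_L A$ over complex lines $L$ as an explicit function of $\lambda$ and $\theta$. I expect this minimum to be $\ge 2\delta>0$ exactly when $\lambda<\cot^2(\theta/2)$. The two eigenvalues of the $J$-average involve $\frac{1\pm\cos\theta}{2}$, and $\frac{1+\cos\theta}{1-\cos\theta}=\cot^2(\theta/2)$. The same computation with the roles of $P$ and $P^\perp$ exchanged should also give the threshold $\tan(\theta/2)$ in part (i) of Theorem~\ref{t:angleversion}.

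Given $\varepsilon<\cot(\theta/2)$, I would fix $\lambda$ with $\varepsilon^2<\lambda<\cot^2(\theta/2)$. On $\Sigma$ this gives $2\delta\le \Delta_\Sigma w\le 2\Lambda$ and $|\nabla_\Sigma w|^2\le C|x|^2$. On $S_{\varepsilon,K}(\Pin)$ we also have
\[
  c_0|x|^2-C_K\le w\le |x|^2
\]
with $c_0=(1-\lambda\varepsilon^2)/(1+\varepsilon^2)>0$. Consequently, inside $B_{\Omega R}$ the sublevel sets $\{w<s\}$ are comparable to Euclidean balls: $B_{\sqrt s}\cap\Sigma\subset\{w<s\}$, and $\{w<s\}\cap\Sigma\subset B_{\sqrt{(s+C_K)/c_0}}$ once $s\ge R_1^2$. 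This comparability is where $\Omega$ and $\omega$ come from, namely $\Omega\approx c_0^{-1/2}$ up to a constant factor. Compactness of $F^{-1}(B_{\Omega R})$, together with its disjointness from $\partial\Sigma$, makes the sublevel sets with $s\le c_0\Omega^2R^2/2$ compact and boundary-free, so the divergence theorem applies to them.

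The core step is an integral identity on $\Sigma_s=\{w<s\}$. Write $A(s)=\Area(\Sigma_s)$ and $I(s)=\int_{\Sigma_s}|\nabla w|^2$. The divergence theorem and coarea formula give $2\delta A(s)\le\int_{\{w=s\}}|\nabla w|=I'(s)$. Integrating $\mathrm{div}((s-w)\nabla w)$ gives $I(s)=\int_{\Sigma_s}(s-w)\Delta w$, which is at most $2\Lambda\int_{\Sigma_s}(s-w)$. To get an upper bound rather than the lower bound these two facts alone give, I would combine them with the second function $|x|^2$, which satisfies $\Delta_\Sigma|x|^2=4$, and with the cone bound $|\nabla w|^2\le C w+C$. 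The aim is a closed inequality of the form $\frac{d}{ds}\log I\le \frac{C}{s}$ for $s\ge R_1^2$. Integrating from $s=\omega^2R_1^2$ to $s\sim R^2$ then gives $A(R)\le C(R/R_1)^{2\beta}A(\omega R_1)$. The last step is to check that $\beta\le 1$ can be arranged by the choice of $\lambda$, so that the bound reads $A(R)\le CR^2A(\omega R_1)$ with $R_1$ depending only on $K$ and $\varepsilon$.

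I expect this last ODE step to be the main obstacle. The naive monotonicity computation naturally bounds area from below, and getting an upper bound requires exploiting that $\Delta w$ is bounded on both sides by multiples of $\Delta|x|^2$ on complex lines. It also requires tracking the boundary flux of $|x|^2$ across $\{w=s\}$ against that of $w$. If the direct ODE does not close, I would fall back on the cutoff argument of \cite{CM1}. There one tests $\Delta w\ge2\delta$ against $\phi(|x|)$, with $\phi$ equal to $1$ on $B_R$ and supported in $B_{\Omega R}$. One absorbs the $\int w\,\Delta\phi$ error on the annulus using the cone bound on $w$, and controls the small-scale contribution by $A(\omega R_1)$. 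The factor $R^2$ arises from $\sup w\le\Omega^2R^2$ against $\inf\Delta w$.
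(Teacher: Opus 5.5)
Your argument has a genuine gap at the step you flagged, and the ingredients you list cannot close it.

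First, the linear algebra is off. By Lemma \ref{l:minimal-bochner} and Lemma \ref{l:mu}, $\Delta_\Sigma w=4\bigl[1-(1+\lambda)E_\Pin\bigr]$ with $\sin^2\frac\theta2\le E_\Pin\le\cos^2\frac\theta2$. So $w$ is strictly subharmonic on every holomorphic curve if and only if $\lambda<\tan^2\frac\theta2$; the value $\cot^2\frac\theta2$ is the superharmonic threshold (Corollary \ref{l:bochner}). For instance, when $\theta=0$ the complex line $\Pin^\perp$ gives $\Delta_\Sigma w=-4\lambda<0$. Your condition $\varepsilon^2<\lambda$ also forces $c_0<0$ once $\varepsilon\ge1$.

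More fundamentally, correcting $\lambda$ does not help. Every property you plan to use is already satisfied by $w=|x|^2$ on every holomorphic curve, with no cone at all: $2\delta\le\Delta_\Sigma w\le2\Lambda$, $|\nabla_\Sigma w|^2\le C|x|^2$, two-sided comparability of $w$ with $|x|^2$, and comparison with $\Delta_\Sigma|x|^2=4$. In particular they hold on the disks of Theorem \ref{t:rapid}, which lie in $C_\varepsilon(\Pin)$ with $\varepsilon>\cot\frac\theta2$ and have no polynomial area bound. So no ODE built from these inequalities can yield $A_F(R)\le CR^2A_F(\omega R_1)$; the threshold $\cot\frac\theta2$ is invisible to the argument.

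The fallback fails for the same reason. Testing $\Delta_\Sigma w\ge2\delta$ against $\phi(|x|)$ gives $2\delta A(R)\le\int w\,\Delta_\Sigma\phi\le C\,A(\Omega R)$. This bounds the small scale by the large scale, which is the wrong direction, and nothing brings in $A(\omega R_1)$.

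The missing idea is a holomorphic exhaustion instead of a real quadratic one. By Corollary \ref{c:lines}(iii), for $\varepsilon<\varepsilon'<\cot\frac\theta2$ the cone $C_{\varepsilon'}(\Pin)$ meets the complex line $L_+$ only at $0$. This is precisely what fails above the threshold, by Corollary \ref{c:lines}(iv). Hence the complex-linear coordinate $z_-$, which vanishes exactly on $L_+$, satisfies $|z_-|\ge c|x|$ on the cone (Lemma \ref{l:zminus}).

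The paper's proof then runs as follows.
\begin{enumerate}
\item $h=z_-\circ F$ is holomorphic and $\log|h|$ is harmonic, so the flux $I_1=\int_{\{|h|=r\}}|\nabla\log|h||$ is independent of $r$ (it counts the degree of $h$).
\item The co-area formula gives $\int_{\{R_1\le|h|\le R\}}|\nabla|h||^2=\frac{I_1}{2}(R^2-R_1^2)$, with $\frac32R_1^2I_1\le A_F(\omega R_1)$ for $\omega=2/c$.
\item Each $x_i\circ F$ is harmonic with $|x_i\circ F|\le C(1+|h|)$, so the Caccioppoli estimate of Lemma \ref{l:twofcts} transfers this bound to the area.
\end{enumerate}

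This exact conservation is what your closed ODE would need. For $u=|h|^2$, holomorphicity gives the identity $|\nabla u|^2=u\,\Delta u$, which makes $\int_{\{u<s\}}\Delta u$ exactly linear in $s$. No such identity holds for $|Px|^2-\lambda|P^\perp x|^2$.
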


\subsection{Minimal surface thresholds}

The techniques here are flexible and apply also to weak solutions of the minimal surface equation known as stationary varifolds. 
For stationary varifolds of arbitrary dimension and codimension, there are again three regimes depending on the growth of the height.   In this case,  $\Pin$ is an $n$-plane in $\R^{n+k}$, while
 $P$ and $P^\perp$ are the corresponding orthogonal projections, $S_{\varepsilon,K}(\Pin)
 =\{x\in\R^{n+k}:|P^\perp x|\leq\varepsilon|Px|+K\}$ and $\varepsilon$ is the aperture.    For a stationary integral varifold $\Sigma^n\subset \R^{n+k}$, set $V(R)=\text{Vol} (B_R \cap \Sigma)$.  
 
\begin{theorem}[Varifold  thresholds; Figure \ref{f:real-growth-regimes}] 	\label{t:realsetting}
The three regimes for
 stationary integral varifolds:
\begin{enumerate}
\item[(i)] If the height of $\Sigma$ over $\Pi$ grows sublinearly, then $V (r)$ has Euclidean growth.
\item[(ii)] There exist $\delta_n , \delta_n'> 0$ so that if $\Sigma \subset S_{\varepsilon,K}(\Pin)$ and  $\varepsilon < \delta_n$, then 
$V(r)$ grows polynomially;  if $\varepsilon < \delta_n' $, then $V(r) \leq c\, (1+r)^{ n+c_n\,\varepsilon^2}$ for $c=c(\Sigma)>0$.  
\item[(iii)]  If $\varepsilon > \delta_n$, then there are examples in $S_{\varepsilon,K}(\Pin)$ with super-polynomial growth.
\end{enumerate}
\end{theorem}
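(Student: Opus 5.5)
Part (i) is the confinement theorem of \cite{CM1}, which we quote directly: sublinear height over $\Pin$ forces Euclidean volume growth for stationary integral varifolds in every dimension and codimension. The content lies in (ii) and (iii), and the plan is to prove (ii) by a monotonicity-type differential inequality with an $\varepsilon^2$ error term, and (iii) by the catenoid sums of Theorem~\ref{t:catenoid-sum}.

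For (ii), test the first variation with a vector field adapted to the cone rather than the position vector. The position field gives $\operatorname{div}_\Sigma x = n$ and the usual monotonicity. The planar field $Px$ gives $\operatorname{div}_\Sigma Px = n - |P^\perp \nabla^\Sigma|^2$-type terms, that is, $n$ minus the tilt-excess $|P^\perp\circ P_{T\Sigma}|^2$. Combining the two with cutoffs in $|Px|$, as in \cite{CM1}, produces two estimates. The first is a Caccioppoli-type bound: the tilt-excess on $B_R$ is controlled by $R^{-2}\int_{B_{2R}}|P^\perp x|^2 \le C\varepsilon^2 V(2R)+CK^2R^{-2}V(2R)$, using $|P^\perp x|\le \varepsilon|Px|+K$. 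The second is a modified monotonicity for the cylindrical quantity $r^{-n}\Vol(\{|Px|<r\}\cap\Sigma)$. Its derivative is bounded below by $-C r^{-n-1}\int |P^\perp x|\,|P^\perp\nabla|x|\,|$, which in turn is bounded by the tilt term, giving
\[
\frac{d}{dr}\log \tilde V(r) \le \frac{n + c_n\varepsilon^2}{r} + \frac{CK^2}{r^3}.
\]
Since $|x|\le (1+\varepsilon)|Px|+K$, cylinders and balls are comparable. Integrating then gives $V(r)\le c(1+r)^{n+c_n\varepsilon^2}$ once $\varepsilon<\delta_n'$, where the constant is needed to absorb the ratio $V(2R)/V(R)$ appearing on the right of the Caccioppoli step.

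For the weaker polynomial statement with $\varepsilon<\delta_n$, a cruder doubling suffices, parallel to Theorem~\ref{t:doubling}. Confinement at scale $\Omega R$ should give $V(R)\le C\,V(\omega R)$ uniformly in $R$, and iterating this yields polynomial growth with some exponent depending on $C$. The argument would be a compactness/contradiction scheme. Rescale a sequence violating doubling, extract a stationary limit in the closed cone, and use that a stationary varifold in a thin cone cannot concentrate mass away from the origin. This last fact would come from a barrier argument with catenoidal comparison, which is where the threshold $\delta_n$ is defined: as the supremum of apertures where this works.

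For (iii), take $\delta_n$ to match the aperture $\max_t t/\cosh t$ (or its $n$-dimensional analogue) of Theorem~\ref{t:catenoid-sum}. Superpose scaled catenoids $\lambda_j\,\mathrm{Cat}$ with multiplicities $m_j\to\infty$ chosen so that $V(r)$ outgrows every polynomial. A locally finite sum of stationary varifolds is stationary, and each scaled catenoid lies in the fixed cone of that aperture. Strictly, this needs $\delta_n$ in (ii) to equal the catenoid aperture, or else (iii) should be read with $\delta_n$ replaced by that constant.

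The main obstacle is the Caccioppoli step. The term $V(2R)$ on the right couples scales, so the differential inequality is not closed. I would first try to close it with a Gronwall argument over dyadic scales, using the polynomial bound from the doubling step as an a priori input, and then bootstrap the exponent down to $n+c_n\varepsilon^2$.
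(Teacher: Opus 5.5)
Your handling of (i), and of the catenoid sums in (iii), matches the paper. The argument for the exponent $n+c_n\varepsilon^2$ in (ii), however, breaks at its central step, where the inequality runs the wrong way. You bound the derivative of $r^{-n}\Vol(\{|Px|<r\}\cap\Sigma)$ \emph{from below} by a tilt error, and then conclude an \emph{upper} bound on $\frac{d}{dr}\log\tilde V$. A lower bound on the derivative of a volume ratio says only that the ratio almost increases. Like ordinary monotonicity, it controls growth from below and can never give $V(r)\le c\,(1+r)^{n+c_n\varepsilon^2}$. What is needed is a reverse inequality, bounding the volume at a large scale by the volume at a small scale plus a tilt error. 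The paper imports this from Lemma~2.5 of \cite{CM1} in the form \eqref{e:cm1A}: $V(\omega)\le(1+n^22^n\omega^n)V(1)+n^2\omega^n\int_{\Sigma_{1,\omega}}E\,s^{-n}$. The mechanism is that $\operatorname{div}_\Sigma(Px)=n(1-E)$ for $s=|Px|$. So for a plane the cylindrical volume ratio is exactly constant, and its deviation in \emph{either} direction is governed by the tilt excess. In differential form, $U(r)=\int_{\{s<r\}}|\nabla^\Sigma s|^2$ satisfies $\bigl|\frac{d}{dr}(r^{-n}U)\bigr|\le n^2r^{-n-1}\int_{\{s<r\}}E$ and $0\le V-U\le n\int_{\{s<r\}}E$.

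Closing the argument also needs more than an a priori polynomial bound. The ratio $V(2R)/V(R)$ produced by the Caccioppoli step multiplies $\varepsilon^2$, so it ends up in $c_n$ and must be bounded by a constant depending only on $n$. Polynomial growth does not give this at every scale, whereas the doubling theorem of \cite{CM1} (Theorem~0.5 there) does. The paper needs that theorem only once. Monotonicity, now used in its correct direction, dominates every dyadic shell below the top scale by the top-scale volume: $V(2^{i+2})\le(1+\varepsilon^2)^{n/2}V(2\omega)\,(2^{i+2}/2\omega)^n$. With the Caccioppoli bound \eqref{e:cm2B}, the error in \eqref{e:cm1A} is then at most a constant times $\varepsilon^2\log\omega\,V(2\omega)$. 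One doubling yields Lemma~\ref{l:cm1D}: $V(\omega)\le c\,\omega^nV(1)+c'\varepsilon^2\log\omega\,V(\omega)$, with dimensional constants $c>1$ and $c'$. Iterating this at a fixed scale $\omega$ would only give the exponent $n+\log c/\log\omega$, which is not $O(\varepsilon^2)$. Instead one takes $\Omega$ with $c'\varepsilon^2\log\Omega=\frac12$ and absorbs the last term. This gives $V(\Omega)\le2c\,\Omega^nV(1)=\Omega^{n+\delta}V(1)$ with $\delta=\log(2c)/\log\Omega=2c'\log(2c)\,\varepsilon^2$, and one iterates over powers of $\Omega$.

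Finally, your compactness and barrier scheme for the polynomial regime is unnecessary. Theorem~0.5 of \cite{CM1} gives polynomial growth below some $\kappa_n>0$. Define $\delta_n$ as the supremum of apertures that force polynomial growth; this is a monotone condition, since the regions $S_{\varepsilon,K}(\Pin)$ increase with $\varepsilon$. Then the first assertion of (ii) and all of (iii) hold by definition, with $\kappa_n\le\delta_n\le\varepsilon_*$ by Theorem~\ref{t:catenoid-sum}. This is how the paper removes the mismatch you noticed in (iii).
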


\vskip1mm
\noindent
Theorem~\ref{t:catenoid-sum} shows that $\delta_n\le\max_{t\ge0}t/\cosh t\approx0.66$
in every codimension: sums of scaled catenoids, with large multiplicities, give
examples with arbitrarily fast growth.
Theorem \ref{t:varifold} shows that the dependence on $\varepsilon$ in (ii) is sharp. Given $\varepsilon \in (0,1/10)$:
\begin{enumerate}
\item[(iv)] There is a
stationary integral $2$-varifold in $S_{\varepsilon,1}(\Pi) \subset \R^3$ with
 $ V(r) \ge r^{2+\frac{4}{5} \, \varepsilon^2}$ for $r > R_0$.
\end{enumerate}

  In particular,  the confinement results of
\cite{CM1,CM2} cannot be improved from sublinear to linear height without further hypotheses.  
Figure~\ref{f:varifold3d} shows half of the example in (iv).

\begin{figure}[ht]
\centering
\begin{tikzpicture}[
    x=1.76cm,
    y=1.15cm,
    scale=.82,
    every node/.style={font=\small},
    line cap=round,
    line join=round
]
    \coordinate (O) at (0,0);

    \fill[black!12]
       (O)--(1.4,.56)--plot[domain=1.4:5.25,samples=100]
          (\x,{.56+.3394*(sqrt(\x-1.22)-sqrt(.18))})--(5.25,0)--(O);
    \shade[bottom color=black!45,top color=white]
       (O)--(5.25,2.75)--(5.25,4.70)--(0,4.70)--cycle;

    \draw[->,thick]
       (-.85,0)--(5.28,0);
    \node[below right] at (4.85,-.03) {$\Pi$};
    \draw[->,thick]
       (0,-1.05)--(0,4.78) node[above] {$\Pi^\perp$};

    \draw[thick,black!55]
       (O)--(1.4,.56)
       plot[domain=1.4:5.25,samples=100]
          (\x,{.56+.3394*(sqrt(\x-1.22)-sqrt(.18))});
    \draw[thick,black!75] (O)--(5.25,2.75);
    \node[rotate=20,above=2pt] at (5.55,2.8) {$\delta_n$};
    \fill (O) circle (1.3pt);

    \node[align=center] at (3.7,.42)
       {Euclidean growth};
    \node[align=center,rotate=12,font=\footnotesize] at (3.7,1.50)
       {Polynomial growth};
    \node[align=center] at (2.95,3.05)
       {Super polynomial growth};

    \draw[decorate,
          decoration={brace,amplitude=5pt,mirror},
          thick]
       (5.43,.04)--(5.43,1.05)
       node[midway,right=8pt,align=left]
       {sublinear\\[-1pt] height};

    \draw[decorate,
          decoration={brace,amplitude=5pt,mirror},
          thick]
       (5.43,1.18)--(5.43,2.68)
       node[midway,right=8pt,align=left]
       {narrow\\[-1pt] cones};
\end{tikzpicture}
\caption{Three regimes of volume-growth for \emph{stationary varifolds}.}
\label{f:real-growth-regimes}
\end{figure}

\begin{figure}[ht]
\centering
\includegraphics[width=0.5\textwidth]{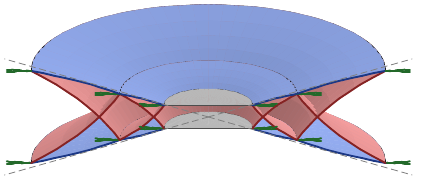}
\caption{Half of the stationary varifold in (iv), cut along a vertical plane
through the $x_3$-axis.  It consists of two flat disks (gray) and pieces of
catenoids (blue and red) that meet along circles and stay in the cone
$|x_3|\le\varepsilon r$ (dashed); the catenoidal ends are only indicated, in
green, on the cut.  See Section~\ref{s:varifolds}.}
\label{f:varifold3d}
\end{figure}

\vskip1mm
\subsection{Organization}
  Section \ref{s:tangent}  proves a lower bound
for the aperture of a  cone containing a minimal surface in terms of 
   its tilt-excess.  Section \ref{s:kahler} proves
    optimal lower and upper bounds for the tilt-excess of a holomorphic curve in terms of
     the K\"ahler-angle.  Section \ref{s:examples} first constructs superquadratic Lagrangian disks  and then obtains the angle-dependent existence theorem by
transporting them with a suitable complex-linear isomorphism.  Section \ref{s:rapid} modifies this construction to get arbitrarily fast area growth.
Section
\ref{s:algebraicity} proves the angle-dependent algebraicity theorem and deduces part (ii) in Theorem
\ref{t:angleversion}. 
  Section \ref{s:narrow} gives the
polynomial bound for narrow real cones,  the catenoid examples to establish Theorem \ref{t:realsetting}, and a holomorphic analog of catenoids.
Section \ref{s:varifolds}
proves (iv) stated after Theorem \ref{t:realsetting}.
Section
\ref{s:higher-domains} extends the holomorphic results to arbitrary complex dimension and
codimension.

\section{A  lower bound for tilt excess}\label{s:tangent}

The purpose of this section is to turn a uniform lower bound on the tilt-excess of a minimal surface into a lower bound
for the aperture of any cone containing it.  Proposition
\ref{p:tangent-obstruction} gives the precise bound in $\R^n$, using only
minimality and no complex or holomorphic structure.

Let $\Pin\subset\R^n$ be a two-plane, and write $P,\,P^\perp$ for the orthogonal projections
onto $\Pin$ and $\Pout$, respectively.

\begin{definition}[Average vertical
fraction  of $V$]\label{d:mu}
For a two-plane $V\subset\R^n$ and an orthonormal basis $v_1,v_2$ of $V$, set
\[
 \mu_\Pin(V)= \frac{1}{2} \, \Tr_V \, \left( v\mapsto|P^\perp v|^2 \right)
 =\tfrac12\bigl(|P^\perp v_1|^2+|P^\perp v_2|^2\bigr) \, .
\]
\end{definition}

Since $\mu_{\Pin}$ is defined as a trace, it is independent of the choice of orthonormal basis.
In applications below, the plane $\Pi$ will be fixed, while the plane $V$ will be the tangent plane at a point to a holomorphic curve, and thus $V$ will vary with the particular point.
 
 \vskip1mm
 When $\Sigma$ is a submanifold, the average vertical fraction $\mu_{\Pi}$ of its tangent plane $T\Sigma$ at a point is   the 
 tilt-excess\footnote{Our normalization differs by a factor from
\cite{CM1}.  In the literature, the term \emph{tilt excess} refers to
the integral over a ball of the pointwise quantity defined here.  We use only
the pointwise quantity and therefore adopt the same term for it. If $\Sigma^m$ is $m$-dimensional, then $E_{\Pi}$ here will be $\frac{1}{m}$ of the quantity in \cite{CM1}.}; cf. \cite{Al, Be, CM1, dL}.  The tilt-excess $E_{\Pi}$ at a point $x$ 
 measures
  the deviation of $T_x\Sigma$ from $\Pi$.  If $e_3 , \dots , e_n$ is an orthonormal basis for $\Pout$, then 
 \begin{align}
 	E_{\Pin} = \frac{1}{2} \,  \sum_{j=3}^n \left| e_j^T \right|^2  
 \end{align}
 where $e_j^T$ is the projection of $e_j$ onto the tangent plane $T_x\Sigma$.
 
 \begin{lemma}	\label{l:Emu}
 If $\Sigma$ is a submanifold, then $E_{\Pin} = \mu_{\Pin}(T\Sigma)$.
 \end{lemma}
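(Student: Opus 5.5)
The identity is linear algebra at a single point $x$, so I fix $x$, write $V=T_x\Sigma$, and compare two traces of one operator. Let $Q$ denote the orthogonal projection of $\R^n$ onto $V$, and choose an orthonormal basis $v_1,v_2$ of $V$ and an orthonormal basis $e_3,\dots,e_n$ of $\Pout$. The plan is to write both sides of the lemma as $\frac12\Tr(P^\perp Q P^\perp)$, or equivalently as $\frac12\Tr(QP^\perp Q)$, and then use the cyclic invariance of the trace.

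For the left side, $e_j^T=Qe_j$, so
\[
E_\Pin=\tfrac12\sum_{j\ge3}|Qe_j|^2=\tfrac12\sum_{j\ge 3}\sum_{i=1}^2\langle e_j,v_i\rangle^2 .
\]
For the right side, $P^\perp v_i=\sum_{j\ge3}\langle v_i,e_j\rangle e_j$, so
\[
\mu_\Pin(V)=\tfrac12\sum_{i=1}^2|P^\perp v_i|^2=\tfrac12\sum_{i=1}^2\sum_{j\ge3}\langle v_i,e_j\rangle^2 .
\]

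These are the same finite double sum of the quantities $\langle v_i,e_j\rangle^2$, with the order of summation exchanged. This proves the lemma.

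Two remarks complete the argument. First, the quantity $\sum_{j}|Qe_j|^2$ does not depend on the choice of orthonormal basis of $\Pout$, since it equals $\Tr(QP^\perp)$. So $E_\Pin$ is well defined, just as the paper notes for $\mu_\Pin$. Second, the identity is purely pointwise, so it holds at every point of the submanifold.

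There is no real obstacle here. The only care needed is to check that the factor $\tfrac12$ appears in both definitions.
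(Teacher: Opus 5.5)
Your proof is correct and is essentially the paper's own argument: both sides are written as the double sum $\tfrac12\sum_{j\ge3}\sum_{i=1}^2\langle e_j,v_i\rangle^2$ and the order of summation is exchanged. The trace framing and the remark on basis-independence ($\sum_j|Qe_j|^2=\Tr(QP^\perp)$) are accurate extras that the paper does not need.
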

 
 \begin{proof}
 Let $e_3, \dots , e_n$ be an orthonormal frame for $\Pout$ and let $v_1 , v_2$ be an orthonormal basis for the tangent space to $\Sigma$ at a point.
 We have that
 \begin{align}
 	2\, E_{\Pin} =   \sum_{j=3}^n \left| e_j^T \right|^2  =   \sum_{j=3}^n \sum_{m=1}^2 \langle e_j , v_m \rangle^2  
	=   \sum_{m=1}^2  \sum_{j=3}^n \langle e_j , v_m \rangle^2  =   \sum_{m=1}^2  \left| P^{\perp} v_m \right|^2  = 2\, \mu_{\Pin}(T\Sigma) \, .
 \end{align}
 \end{proof}

\vskip1mm

For $\varepsilon,K\geq0$ use the same notation as before,
\[
 S_{\varepsilon,K}(\Pin)
 =\{x\in\R^n:|P^\perp x|\leq\varepsilon|Px|+K\}.
\]
For $\kappa\geq0$ define, in analogy with \eqref{e:rho},
\[
 \rho^\Pin_\kappa(x)=|Px|^2-\kappa|P^\perp x|^2 .
\]

The next lemma should be compared with Section $1$ in \cite{CM1}.

\begin{lemma} \label{l:minimal-bochner}
Let $X:\Sigma\to\R^n$ be a conformal harmonic map from a Riemann surface.  In isothermal coordinates
 $\zeta=(\xi, \eta)$, put $\lambda=|X_\xi|=|X_\eta|$.  At every regular
point, we have
\begin{equation}\label{e:minimal-bochner}
\Delta_{\Sigma} \bigl(\rho^\Pin_\kappa\circ X\bigr)
 =4\lambda^2\Bigl[ 1- (1+\kappa) \, E_\Pin   \Bigr].
\end{equation}
At a branch point, $\Delta_{\Sigma}  (\rho^\Pin_\kappa\circ X)=0$.
\end{lemma}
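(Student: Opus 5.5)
The plan is a direct computation in isothermal coordinates, using that $X$ is harmonic and conformal. Write $\Delta_{\Sigma}=\lambda^{-2}(\partial_\xi^2+\partial_\eta^2)$ at regular points. Put $\Delta_0=\partial_\xi^2+\partial_\eta^2$ for the flat Laplacian in the coordinate chart, and set $Q(x)=|Px|^2-\kappa|P^\perp x|^2$. Then $Q$ is a quadratic form on $\R^n$ with associated symmetric bilinear form $B(x,y)=\langle Px,Py\rangle-\kappa\langle P^\perp x,P^\perp y\rangle$.

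The first step is the chain rule for $Q\circ X$. Since $Q$ is quadratic,
\[
\Delta_0 (Q\circ X)=2\bigl(B(X_\xi,X_\xi)+B(X_\eta,X_\eta)\bigr)+2B(X,\Delta_0 X).
\]
The last term vanishes because harmonicity of $X$ (conformal harmonic in the coordinates) means $\Delta_0X=0$.

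The second step evaluates the remaining term. At a regular point, $v_1=X_\xi/\lambda$ and $v_2=X_\eta/\lambda$ form an orthonormal basis of the tangent plane $V=dX(T\Sigma)$. For a unit vector $v$ we have $B(v,v)=|Pv|^2-\kappa|P^\perp v|^2=1-(1+\kappa)|P^\perp v|^2$. Summing over $v_1,v_2$ and using Definition~\ref{d:mu} together with Lemma~\ref{l:Emu} gives
\[
B(X_\xi,X_\xi)+B(X_\eta,X_\eta)=\lambda^2\bigl(2-2(1+\kappa)E_\Pin\bigr).
\]
Hence $\Delta_0(Q\circ X)=4\lambda^2\bigl[1-(1+\kappa)E_\Pin\bigr]$.

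The only real subtlety is normalization: whether $\Delta_\Sigma$ in \eqref{e:minimal-bochner} means the flat Laplacian in the conformal coordinate or the intrinsic Laplace--Beltrami operator. With the intrinsic operator, dividing by $\lambda^2$ would give $4[1-(1+\kappa)E_\Pin]$, with no $\lambda^2$. The stated factor $4\lambda^2$ therefore matches $\Delta_0$. I would read $\Delta_\Sigma$ in the lemma as the coordinate Laplacian $\partial_\xi^2+\partial_\eta^2$, which is consistent with the branch-point clause, and note this convention explicitly.

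At a branch point, $\lambda=0$, so $X_\xi=X_\eta=0$. The formula for $\Delta_0(Q\circ X)$ from the first step gives $0$ directly, since $\Delta_0 X=0$ holds everywhere. This holds even though $E_\Pin$ is undefined there, which proves the last assertion.
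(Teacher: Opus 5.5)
Your computation is correct and is essentially the paper's argument. The paper normalizes $\lambda=1$, writes $\rho^\Pin_\kappa=|x|^2-(1+\kappa)\sum_{j\ge 3}\langle x,e_j\rangle^2$, uses harmonicity of the coordinates to get $\Delta_\Sigma|x|^2=4$ and $\Delta_\Sigma\langle x,e_j\rangle^2=2|e_j^T|^2$, and then applies Lemma~\ref{l:Emu}; you instead differentiate the quadratic form directly on the frame $X_\xi/\lambda,\,X_\eta/\lambda$, which treats the conformal factor and branch points explicitly rather than leaving them to ``easy modifications.''

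Your normalization remark is also right. The factor $4\lambda^2$ corresponds to the flat coordinate Laplacian, while the Laplace--Beltrami operator gives $4[1-(1+\kappa)E_\Pin]$, as in the paper's $\lambda=1$ computation and Corollary~\ref{l:bochner}; only the sign is used downstream.
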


\begin{proof}
We will assume that $X$ is a minimal isometric immersion, i.e., $\lambda = 1$; the general conformal and harmonic case follows 
with easy modifications.   Let $e_1 , e_2$ be an orthonormal basis for $\Pin$ and $e_3 , \dots , e_n$ be an orthonormal basis for $\Pout$.
As above, let $v_1,v_2$ be an orthonormal basis for $\Sigma$ at a point and let $e^T = \langle e , v_1 \rangle \, v_1 + \langle e , v_2 \rangle \, v_2$
be tangential projection of a vector $e$.

Since the coordinate functions are harmonic and $\Delta_{\Sigma} |x|^2 = 4$ for a minimal surface, we have
\begin{align}
	\Delta_{\Sigma} \,  \rho^\Pin_\kappa &= 4 - (1+\kappa) \, \Delta_{\Sigma} \, \sum_{j=3}^n \langle x , e_j \rangle^2 =
	 4 - 2\,  (1+\kappa) \,   \sum_{j=3}^n |e_j^T|^2  =   4 - 4\,  (1+\kappa) \, E_{\Pi}   \, ,  
\end{align}
giving the claim.
\end{proof}

The lemma applies to every oriented minimal immersion after the domain is given its induced
conformal structure, and it also allows branch points.

\begin{proposition} \label{p:tangent-obstruction}
Let $X:\Sigma\to\R^n$ be a nonconstant proper conformal harmonic map from an open
Riemann surface, and suppose that at every regular point
$
 E_\Pin \geq\mu_0
$
for some $0<\mu_0 < 1$.  If
$X(\Sigma)\subset S_{\varepsilon,K}(\Pin)$ for some $\varepsilon,K\geq0$,
 then
\[
 \varepsilon\geq\sqrt{\frac{\mu_0}{1-\mu_0}} \, .
\]
\end{proposition}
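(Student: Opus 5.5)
Argue by contradiction with Lemma \ref{l:minimal-bochner}. Suppose $\varepsilon<\sqrt{\mu_0/(1-\mu_0)}$, i.e.\ $\varepsilon^2(1-\mu_0)<\mu_0$. Then choose $\kappa$ with
\[
 \frac{1-\mu_0}{\mu_0}<1+\kappa \quad\text{and}\quad \kappa\,\varepsilon^2>1 ,
\]
for instance $\kappa$ slightly larger than $\varepsilon^{-2}$. This is possible because $1+\varepsilon^{-2}>(1-\mu_0)/\mu_0$ is equivalent to $\varepsilon^{-2}>(1-2\mu_0)/\mu_0$, which holds whenever $\varepsilon^2<\mu_0/(1-\mu_0)$, and trivially when $\mu_0\geq 1/2$.

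With this choice $1-(1+\kappa)E_\Pin\le 1-(1+\kappa)\mu_0<0$ at every regular point. By Lemma \ref{l:minimal-bochner}, $u=\rho^\Pin_\kappa\circ X$ is therefore superharmonic on $\Sigma$: it has $\Delta u\le 0$ with strict inequality at regular points, and branch points are isolated with $\Delta u=0$ there. In particular $u$ is a strictly superharmonic, smooth function on the open surface.

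Next, confinement bounds $u$ from above. On $X(\Sigma)\subset S_{\varepsilon,K}(\Pin)$ we have $|P^\perp x|\le \varepsilon|Px|+K$. Since $\kappa\varepsilon^2>1$, for $|Px|$ large this gives
\[
 u=|Px|^2-\kappa|P^\perp x|^2\ \ge\ |Px|^2-\kappa\bigl(\varepsilon|Px|+K\bigr)^2\quad\text{only as a lower bound,}
\]
which is the wrong direction. The useful estimate goes the other way: one needs $|Px|\lesssim |P^\perp x|$, which the cone does not give. So the sign choice must be reversed. Use instead $-u$, or better the function $w=\kappa|P^\perp x|^2-|Px|^2$ with $\kappa$ as above. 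This $w$ is subharmonic, since $\Delta w=-4\lambda^2[1-(1+\kappa)E_\Pin]>0$ at regular points. On the slab, $w\le \kappa(\varepsilon|Px|+K)^2-|Px|^2$, and because $\kappa\varepsilon^2>1$ this is not bounded above either. Thus one needs $\kappa\varepsilon^2<1$ together with $(1+\kappa)\mu_0>1$. These are compatible precisely when $1+\varepsilon^{-2}>1/\mu_0$, i.e.\ $\varepsilon^2<\mu_0/(1-\mu_0)$, which is exactly the contradiction hypothesis. So take $\kappa$ slightly below $\varepsilon^{-2}$. Then $w\le \kappa(\varepsilon|Px|+K)^2-|Px|^2\le C(K,\kappa,\varepsilon)$, uniformly bounded above on $\Sigma$, while $w$ is subharmonic and nonconstant.

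The final step is the main obstacle: a bounded-above subharmonic function gives no contradiction on a hyperbolic surface. Properness must be used through the fact that $w\to-\infty$ at infinity. Indeed $w\le \kappa(\varepsilon|Px|+K)^2-|Px|^2\to-\infty$ as $|Px|\to\infty$, because $\kappa\varepsilon^2<1$, and on $S_{\varepsilon,K}$ the condition $|x|\to\infty$ forces $|Px|\to\infty$. Since $X$ is proper, $w\circ X\to-\infty$ at the ideal boundary of $\Sigma$. Hence $\{w\ge c\}$ is compact for every $c$, so $w$ attains an interior maximum on the open surface $\Sigma$. By the strong maximum principle $w$ is then constant. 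This contradicts $\Delta w>0$ at regular points, which exist because $X$ is nonconstant. (In the degenerate case $\varepsilon=0$ any $\kappa>(1-\mu_0)/\mu_0$ works directly.) This yields $\varepsilon\ge\sqrt{\mu_0/(1-\mu_0)}$.
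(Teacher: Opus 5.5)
Your final argument is correct and is the paper's proof up to a sign. The paper takes $\kappa=\varepsilon_0^{-2}$ with $\varepsilon<\varepsilon_0<\sqrt{\mu_0/(1-\mu_0)}$, so that $\rho^\Pin_\kappa\circ X=-w$ is superharmonic and tends to $+\infty$ at infinity, and it concludes with the weak minimum principle on an exhaustion; you instead use the interior maximum of $w$ together with the strong maximum principle.

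You should cut the false start. Once $\kappa<\varepsilon^{-2}$, the lower bound $u\ge|Px|^2-\kappa(\varepsilon|Px|+K)^2$ that you dismissed as the wrong direction is exactly what is needed. The choice $\kappa\varepsilon^2>1$ in your first paragraph is never used.
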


\begin{proof}
Suppose that
$\varepsilon<\sqrt{\mu_0/(1-\mu_0)}$. Choose $\varepsilon_0$ with
\[
 \varepsilon<\varepsilon_0<\sqrt{\frac{\mu_0}{1-\mu_0}}
 \qquad\text{and set}\qquad \kappa=\varepsilon_0^{-2},
\]
so that  $\kappa>(1-\mu_0)/\mu_0$.  Thus, at every
regular point,  we have
\[
 1-(1+\kappa) E_{\Pin}
 \leq1-(1+\kappa)\mu_0<0.
\]
Lemma \ref{l:minimal-bochner} therefore shows that
$\rho:=\rho^\Pin_\kappa\circ X$ is superharmonic on all of $\Sigma$.  The cone bound
$|P^\perp X|\leq\varepsilon|PX|+K$ gives
\begin{align}	\label{e:CB1}
 \rho \geq\Bigl(1-\bigl(\tfrac{\varepsilon}{\varepsilon_0}\bigr)^2\Bigr)|PX|^2
 -\frac{2\varepsilon K}{\varepsilon_0^2}|PX|-\frac{K^2}{\varepsilon_0^2},
\end{align}
while
\begin{align}	\label{e:CB2}
 |X|^2\leq(1+2\varepsilon^2)|PX|^2+2K^2.
\end{align}
Using the absorbing inequality $2ab \leq \delta \, a^2 + \delta^{-1} \, b^2$ with 
$$\delta = \frac{1}{2} \, \Bigl(1-\bigl(\tfrac{\varepsilon}{\varepsilon_0}\bigr)^2\Bigr) , \, a = |PX| , {\text{ and }}
b = \frac{\varepsilon \, K }{\varepsilon_0^2}
\, ,$$
 (\ref{e:CB1}) gives that
\begin{align}	\label{e:CB3}
 \rho \geq \frac{1}{2} \, \Bigl(1-\bigl(\tfrac{\varepsilon}{\varepsilon_0}\bigr)^2\Bigr)|PX|^2
 -\frac{K^2}{\varepsilon_0^2} - 2\, \Bigl(1-\bigl(\tfrac{\varepsilon}{\varepsilon_0}\bigr)^2\Bigr)^{-1} \, 
  \frac{\varepsilon^2 \, K^2 }{\varepsilon_0^4}
 \, .
\end{align}
Along every divergent sequence in $\Sigma$, properness forces $|X|\to\infty$; hence 
$|PX|\to\infty$ by (\ref{e:CB2}), and thus $\rho \to\infty$ by (\ref{e:CB3}).  

Exhaust $\Sigma$ by relatively compact domains
$\Omega_j$ with smooth boundary, chosen so that $\partial\Omega_j$ leaves every compact subset
of $\Sigma$ as $j\to \infty$.  In particular, the minimum of $\rho$ on $\partial \Omega_j$ goes to infinity as $j \to \infty$. Thus, 
for any point $p\in\Omega_1$, the minimum principle for superharmonic functions gives
\[
 \rho (p)\geq\min_{\partial\Omega_j}\rho \longrightarrow\infty,
\]
a contradiction.
\end{proof}

\section{Tilt-excess for  holomorphic curves}  \label{s:kahler}

 The purpose of this section
is to determine the optimal lower and upper bounds for the tilt-excess of a holomorphic curve relative to a fixed
two-plane $\Pin$.  These bounds depend only on the K\"ahler angle of $\Pin$ and
are attained by explicit complex lines.  The bounds are established in Lemma
\ref{l:mu}, and Corollary \ref{c:lines} translates them into sharp statements
about complex lines contained in, or missing, a cone.

Fix a constant orthogonal complex structure
$J$ on $\R^4$, thereby identifying $(\R^4,J)$ with $\C^2$. 
Let $\omega(x,y)=\langle Jx,y\rangle$ be the associated K\"ahler form.  A
\emph{complex line} is a $J$-invariant two-plane through $0$.  The  K\"ahler angle 
$\theta=\theta(\Pin)\in[0,\pi/2]$   of $\Pin$ is defined by $\cos \theta = |\omega (e_1 , e_2)|$ for an orthonormal basis $\{ e_1 , e_2 \}$ of
$\Pin$.  It is easy to check that $\theta$ does not depend on the choice of basis and that $\theta = 0$  for a complex line.

\subsection{The vertical fraction of a complex line}
This subsection will develop the linear algebra relating the average vertical fraction for a complex line 
to the K\"ahler angle of the plane $\Pin$.

\vskip1mm
We will need the following elementary normal form for a basis.

\begin{lemma}\label{l:normal}
Let $\Pin\subset\R^4$ be a two-plane with K\"ahler angle $\theta$.  Then there is an orthonormal
basis $f_1,Jf_1,f_2,Jf_2$ of $\R^4$ such that
\[
\Pin=\mathrm{span}\{e_1,e_2\},\quad e_1=f_1,\ \ e_2= \cos \theta \,Jf_1+\sin \theta \,f_2,
\]
\[
\Pout=\mathrm{span}\{e_3,e_4\},\quad e_3=\sin \theta \,Jf_1- \cos \theta \,f_2,\ \ e_4=Jf_2 .
\]
Moreover $\theta(\Pout)=\theta(\Pin)$.
\end{lemma}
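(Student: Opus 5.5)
We construct the basis directly from $\Pin$.  Let $e_1,e_2$ be any orthonormal basis of $\Pin$, reordered or with the sign of $e_2$ flipped so that $\omega(e_1,e_2)=\langle Je_1,e_2\rangle=\cos\theta\geq0$.  Set $f_1=e_1$; then $Jf_1$ is a unit vector orthogonal to $f_1$, since $\langle Jx,x\rangle=0$.  Decompose $e_2$ along $Jf_1$:
\[
 e_2=\langle e_2,Jf_1\rangle Jf_1+w=\cos\theta\, Jf_1+w,
\]
where $w\perp f_1$ (because $e_2\perp e_1$) and $w\perp Jf_1$.  Hence $|w|^2=1-\cos^2\theta=\sin^2\theta$.

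For the choice of $f_2$ there are two cases.

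\emph{Case $\theta>0$.}  Set $f_2=w/\sin\theta$, so that $e_2=\cos\theta\,Jf_1+\sin\theta\,f_2$.  The complex line $L=\mathrm{span}\{f_1,Jf_1\}$ is $J$-invariant, and $J$ is orthogonal, so $L^\perp$ is also $J$-invariant.  Since $f_2\in L^\perp$ is a unit vector, $Jf_2\in L^\perp$ is a unit vector orthogonal to $f_2$.  Thus $f_1,Jf_1,f_2,Jf_2$ is an orthonormal basis of $\R^4$.

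\emph{Case $\theta=0$.}  Take $f_2$ to be any unit vector in $L^\perp$; the same argument applies.

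Next, with $e_3=\sin\theta\,Jf_1-\cos\theta\,f_2$ and $e_4=Jf_2$, a direct check of inner products in the orthonormal basis shows that $e_1,e_2,e_3,e_4$ is orthonormal.  Indeed:
\begin{itemize}
\item $\langle e_2,e_3\rangle=\cos\theta\sin\theta-\sin\theta\cos\theta=0$;
\item $e_3$ and $e_4$ involve no $f_1$ component, so they are orthogonal to $e_1$;
\item $e_4$ is orthogonal to $e_2$ and to $e_3$, since neither has a $Jf_2$ component.
\end{itemize}
Hence $\mathrm{span}\{e_3,e_4\}=\Pout$.

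Finally, we compute $\omega(e_3,e_4)=\langle Je_3,e_4\rangle$.  Using $J^2=-1$,
\[
 Je_3=-\sin\theta\, f_1-\cos\theta\, Jf_2,
\]
so $\langle Je_3,e_4\rangle=-\cos\theta$.  Therefore $|\omega(e_3,e_4)|=\cos\theta$, which gives $\theta(\Pout)=\theta(\Pin)$.

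We do not expect a real obstacle.  The only points needing care are the sign normalization of $e_2$ and the degenerate case $\theta=0$, where $w=0$ and $f_2$ must be chosen freely in $L^\perp$.
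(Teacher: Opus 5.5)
Your proof is correct and follows essentially the same route as the paper. You normalize so that $\omega(e_1,e_2)=\cos\theta\ge 0$, set $f_1=e_1$, take $f_2$ from the component of $e_2$ orthogonal to $Jf_1$, and then check orthonormality directly. You are slightly more complete than the paper: you handle the degenerate case $\theta=0$ explicitly, and you verify $\theta(\Pout)=\theta(\Pin)$ by computing $\omega(e_3,e_4)=-\cos\theta$, which the paper leaves implicit.
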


\begin{proof}
We will assume that $0 < \cos \theta < 1$; the edge cases follow easily.
Choose an orthonormal basis $e_1,e_2$ of $\Pin$ with 
\begin{align}	\label{e:thetaf}
	\omega(e_1,e_2)=\langle Je_1,e_2\rangle=\cos \theta \, ;
\end{align}
replacing $e_2$ by $-e_2$ if necessary, this is possible.  Set $f_1=e_1$.  
Since
$e_1$ and $e_2$ are orthonormal, (\ref{e:thetaf}) uniquely determines a unit vector $f_2$ perpendicular to both $e_1 = f_1$ and $Jf_1$ 
with
\begin{align}
	e_2= \cos \theta \,Jf_1+ \sin \theta \, f_2 \,  .
\end{align}
The lemma follows by using these properties of $f_1$ and $f_2$ to establish that
\begin{itemize}
\item The unit vectors $f_1 , f_2 , Jf_1 , J f_2$ are orthogonal.
\item The unit vectors $\sin \theta \,Jf_1- \cos \theta \,f_2$ and $J \, f_2$ are orthogonal.
\item The vectors $\sin \theta \,Jf_1- \cos \theta \,f_2$ and $J \, f_2$ are orthogonal to   $f_1$ and 
$ \cos \theta \,Jf_1+\sin \theta \,f_2$. 
\end{itemize}	 
\end{proof}

If $L$ is a complex line and $v\in L$ is a unit vector, Definition \ref{d:mu} becomes
\[
 \mu_\Pin(L)=\tfrac12\bigl(|P^\perp v|^2+|P^\perp Jv|^2\bigr).
\]
Given $\Pi$ with K\"ahler angle $\theta$, let $f_1 , f_2$ be from Lemma \ref{l:normal} and define unit  
\begin{align}	\label{e:vmin}
	v_-  =\cos\frac\theta2\,Jf_1+\sin\frac\theta2\,f_2  {\text{ and }}
	v_+  =\cos\frac\theta2\,f_2-\sin\frac\theta2\,J \, f_1 \, .
\end{align}
Let $L_{\pm}$ be the complex lines generated by $v_{\pm}$, so $\{ v_- , J\, v_-\}$ is a basis for $L_-$ and similarly for $L_+$.

\begin{lemma}\label{l:mu}
 If $\Pi$, $v_{\pm}$ and $L_{\pm}$ are as above, then 
\begin{enumerate}
\item[(i)] $L_-\perp L_+$, $\R^4=L_-\oplus L_+$, and  
\[
|P^\perp x|^2= \sin^2\frac\theta2\,|x|^2 {\text{ for $x \in L_-$ while }}
|P^\perp x|^2= \cos^2\frac\theta2\,|x|^2 {\text{ for }} x \in L_+.
\]
\item[(ii)] For \emph{every} complex line $L$,
\[
\sin^2\tfrac\theta2\ \le\ \mu_\Pin(L)\ \le\ \cos^2\tfrac\theta2,
\]
with equality on the left exactly for $L=L_-$ and on the right exactly for $L=L_+$ (when
$\theta<\pi/2$; for $\theta=\pi/2$ all complex lines have $\mu_\Pin\equiv\frac12$).
\end{enumerate}
\end{lemma}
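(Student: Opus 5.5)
The plan is to work entirely in the normal-form basis of Lemma~\ref{l:normal} and reduce everything to a quadratic form on $\R^4$ that commutes with $J$ on the relevant lines.

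\emph{Part (i).} First, $L_-\perp L_+$ follows by checking that $v_+$ and $Jv_+$ are orthogonal to $v_-$ and $Jv_-$. Here
\[
Jv_-=-\cos\tfrac\theta2 f_1+\sin\tfrac\theta2 Jf_2, \qquad Jv_+=\cos\tfrac\theta2 Jf_2+\sin\tfrac\theta2 f_1,
\]
and each pairwise inner product vanishes by orthonormality of $f_1,Jf_1,f_2,Jf_2$. Dimension count then gives $\R^4=L_-\oplus L_+$.

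For the projections, I will compute $P^\perp$ on the four vectors $v_\pm, Jv_\pm$ using $e_3,e_4$:
\[
\langle v_-,e_3\rangle=\cos\tfrac\theta2\sin\theta-\sin\tfrac\theta2\cos\theta=\sin\tfrac\theta2, \qquad \langle v_-,e_4\rangle=0,
\]
\[
\langle Jv_-,e_3\rangle=0, \qquad \langle Jv_-,e_4\rangle=\sin\tfrac\theta2 .
\]
So $P^\perp v_-=\sin\tfrac\theta2\, e_3$ and $P^\perp Jv_-=\sin\tfrac\theta2\, e_4$, which are orthogonal. Hence for $x=av_-+bJv_-$ we get $|P^\perp x|^2=\sin^2\tfrac\theta2(a^2+b^2)$. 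The same computation for $v_+$ gives $P^\perp v_+=-\cos\tfrac\theta2\,e_3$ and $P^\perp Jv_+=\cos\tfrac\theta2\,e_4$, which yields the $\cos^2$ identity.

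\emph{Part (ii).} Let $Q(x)=|P^\perp x|^2$ and define the $J$-averaged form
\[
\bar Q(x)=\tfrac12\bigl(Q(x)+Q(Jx)\bigr),
\]
so that $\mu_\Pin(L)=\bar Q(v)$ for any unit $v\in L$. By (i), $\bar Q=\sin^2\tfrac\theta2$ on unit vectors of $L_-$ and $\cos^2\tfrac\theta2$ on unit vectors of $L_+$.

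The key step, which I expect to be the main obstacle, is showing that $\bar Q$ has no cross terms between $L_-$ and $L_+$, i.e.\ that $\bar Q$ is diagonal in this splitting. The symmetric operator of $\bar Q$ is $\tfrac12(P^\perp - JP^\perp J)$, which commutes with $J$. Thus it suffices to check that the bilinear pairing of $\bar Q$ between $v_-$ and $v_+$, and between $v_-$ and $Jv_+$, vanishes; $J$-invariance then handles the remaining pairs. From the computed projections,
\[
\langle P^\perp v_-,P^\perp v_+\rangle=-\sin\tfrac\theta2\cos\tfrac\theta2,
\]
while
\[
\langle P^\perp Jv_-,P^\perp Jv_+\rangle=+\sin\tfrac\theta2\cos\tfrac\theta2 .
\]
These cancel in the average. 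Likewise the pairings $\langle P^\perp v_-,P^\perp Jv_+\rangle$ and $\langle P^\perp Jv_-,P^\perp J(Jv_+)\rangle$ both vanish, since each pairs an $e_3$-multiple with an $e_4$-multiple.

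Hence $\bar Q$ has eigenvalue $\sin^2\tfrac\theta2$ on $L_-$ and $\cos^2\tfrac\theta2$ on $L_+$. Write a unit $v=x_-+x_+$ with $x_\pm\in L_\pm$ and $|x_-|^2+|x_+|^2=1$. Then
\[
\mu_\Pin(L)=\sin^2\tfrac\theta2\,|x_-|^2+\cos^2\tfrac\theta2\,|x_+|^2,
\]
which is a convex combination, so it lies between the two values.

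For the equality cases: when $\theta<\pi/2$ we have $\sin^2\tfrac\theta2<\cos^2\tfrac\theta2$. The lower bound is attained iff $x_+=0$, i.e.\ $v\in L_-$, and the complex line through $v$ is then $L_-$; symmetrically, the upper bound is attained iff $L=L_+$. When $\theta=\pi/2$ both eigenvalues equal $\tfrac12$, so $\mu_\Pin\equiv\tfrac12$. The edge case $\theta=0$ is covered by the same formulas, with $e_3=-f_2$.
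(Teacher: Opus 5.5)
Your proof is correct and takes essentially the paper's route: both use the normal form of Lemma~\ref{l:normal} to get $P^\perp v_-=\sin\tfrac\theta2\,e_3$, $P^\perp Jv_-=\sin\tfrac\theta2\,e_4$, $P^\perp v_+=-\cos\tfrac\theta2\,e_3$, $P^\perp Jv_+=\cos\tfrac\theta2\,e_4$, and then arrive at $\mu_\Pin=\sin^2\tfrac\theta2\,|x_-|^2+\cos^2\tfrac\theta2\,|x_+|^2$. The differences are cosmetic: the paper expands $P^\perp v$ and $P^\perp Jv$ directly for a general $v=a_1v_-+a_2Jv_-+a_3v_++a_4Jv_+$, while you check that the cross terms of the $J$-averaged form vanish and use its $J$-equivariance to reduce the number of cases; you also spell out the equality cases, which the paper leaves implicit.
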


\begin{proof}
The vectors $v_- , v_+, J \, v_-$ and $Jv_+$ are easily seen to be orthonormal, giving the first part of (i).
 Using Lemma \ref{l:normal},
it is easy to check that
\begin{align}
	v_- &= \sin \frac{\theta}{2} \, e_3 + \cos \frac{\theta}{2} \, e_2 {\text{ and }}
	J\, v_-  = \sin \frac{\theta}{2} \, e_4 - \cos \frac{\theta}{2} \, e_1 \, , \\
	v_+ &= - \cos \frac{\theta}{2} \, e_3 + \sin \frac{\theta}{2} \, e_2  {\text{ and }}
	J\, v_+  =  \cos \frac{\theta}{2} \, e_4 + \sin \frac{\theta}{2} \, e_1 \, .
\end{align}
The second part of claim (i) follows immediately from this.  Moreover, 
 we see for a vector 
$$
	v = a_1 \, v_- + a_2 \, J \, v_- + a_3 \, v_+ + a_4 \, J \, v_+ 
$$
that 
\begin{align}
	 P^{\perp} v &= \Bigl( a_1 \, \sin \frac{\theta}{2} - a_3 \, \cos \frac{\theta}{2} \Bigr) \, e_3 +
	  \Bigl( a_2 \, \sin \frac{\theta}{2} + a_4 \, \cos \frac{\theta}{2} \Bigr) \, e_4 \, , \\
	  P^{\perp} \, J \, v &= \Bigl( -a_2 \, \sin \frac{\theta}{2} + a_4 \, \cos \frac{\theta}{2} \Bigr) \, e_3 +
	  \Bigl( a_1 \, \sin \frac{\theta}{2} + a_3 \, \cos \frac{\theta}{2} \Bigr) \, e_4 \, .
\end{align}
From this, we see that
 \begin{align}
 	\frac{1}{2} \, \Bigl( \left|  P^{\perp} v \right|^2 +   \left|  P^{\perp} \, J \, v \right|^2 \Bigr) =
	(a_1^2 + a_2^2) \, \sin^2 \frac{\theta}{2}+ (a_3^2 +a_4^2) \, \cos^2 \frac{\theta}{2} \, .
 \end{align}
 Claim (ii) follows immediately from this.
\end{proof}

\begin{corollary}\label{c:lines}
If $\Pin$ has K\"ahler angle $\theta$ and $\varepsilon\ge0$, then:
\begin{enumerate}
\item[(i)] If  
$\varepsilon <\tan\frac\theta2$, then $C_\varepsilon(\Pin)$  does not contain any complex lines.
\item[(ii)]  If $\varepsilon\ge\tan\frac\theta2$, then  $C_\varepsilon(\Pin)$ contains $L_-$.
\item[(iii)] If $\varepsilon<\cot\frac\theta2$, then $L_+ \cap C_\varepsilon(\Pin)=\{0\}$.
\item[(iv)]   If  
$\varepsilon \ge \cot\frac\theta2$, then every complex line $L$ intersects 
$ C_\varepsilon(\Pin) \setminus \{0\}$.
\end{enumerate}
\end{corollary}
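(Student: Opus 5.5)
The plan is to reduce everything to Lemma \ref{l:mu}.  First observe how a cone condition reads on a single vector.  For a vector $x$, membership in $C_\varepsilon(\Pin)$ means $|P^\perp x|\le\varepsilon|Px|$.  Since $|Px|^2=|x|^2-|P^\perp x|^2$, this is equivalent to
\[
(1+\varepsilon^2)\,|P^\perp x|^2\le\varepsilon^2|x|^2 .
\]
So the question is, for each complex line $L$, what values the ratio $|P^\perp x|^2/|x|^2$ takes on $L\setminus\{0\}$.  A line $L$ lies in $C_\varepsilon(\Pin)$ iff this ratio is at most $\varepsilon^2/(1+\varepsilon^2)$ everywhere on $L\setminus\{0\}$.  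Likewise, $L$ meets $C_\varepsilon(\Pin)\setminus\{0\}$ iff the ratio is at most that value somewhere.  Note that $\tan^2(\theta/2)/(1+\tan^2(\theta/2))=\sin^2(\theta/2)$ and $\cot^2(\theta/2)/(1+\cot^2(\theta/2))=\cos^2(\theta/2)$.  Hence the thresholds translate exactly into comparing $\varepsilon^2/(1+\varepsilon^2)$ with $\sin^2(\theta/2)$ and with $\cos^2(\theta/2)$.

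For (ii) and (iii), Lemma \ref{l:mu}(i) gives the ratio directly.  On $L_-$ it is identically $\sin^2(\theta/2)$, which is at most $\varepsilon^2/(1+\varepsilon^2)$ when $\varepsilon\ge\tan(\theta/2)$; so $L_-\subset C_\varepsilon(\Pin)$.  On $L_+$ it is identically $\cos^2(\theta/2)$, which exceeds $\varepsilon^2/(1+\varepsilon^2)$ when $\varepsilon<\cot(\theta/2)$; so only $0$ survives.

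For (i) and (iv) I use averaging together with Lemma \ref{l:mu}(ii).  For a unit vector $v\in L$, the quantity $\mu_\Pin(L)$ is the average of the ratio at $v$ and at $Jv$.  Hence some unit vector of $L$ has ratio at least $\mu_\Pin(L)\ge\sin^2(\theta/2)$, and some unit vector has ratio at most $\mu_\Pin(L)\le\cos^2(\theta/2)$.

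In case (i), $\varepsilon<\tan(\theta/2)$ makes $\varepsilon^2/(1+\varepsilon^2)<\sin^2(\theta/2)$.  The vector of large ratio therefore lies outside the cone, so $L\not\subset C_\varepsilon(\Pin)$.  In case (iv), $\varepsilon\ge\cot(\theta/2)$ gives $\varepsilon^2/(1+\varepsilon^2)\ge\cos^2(\theta/2)$.  The vector of small ratio then lies in $C_\varepsilon(\Pin)\setminus\{0\}$.

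The only point needing care is the equality case in (iv).  There I use the non-strict inequality together with the closedness of the cone, which is built into the definition with $\le$.  The degenerate angles $\theta=0$ and $\theta=\pi/2$ are covered by the same computation, since Lemma \ref{l:mu} holds for them.  In particular, for $\theta=0$ we have $\cot(\theta/2)=\infty$, so (iv) is vacuous.
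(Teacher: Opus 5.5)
Your proof is correct and is essentially the paper's argument: you reformulate the cone condition on unit vectors as $|P^\perp x|^2\le\varepsilon^2/(1+\varepsilon^2)$, use Lemma~\ref{l:mu}(i) for (ii) and (iii), and average over $v, Jv$ with Lemma~\ref{l:mu}(ii) for (i) and (iv). The only difference is cosmetic: you directly pick whichever of $v$, $Jv$ has the larger (resp.\ smaller) vertical fraction, while the paper phrases the same averaging step contrapositively.
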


\begin{proof}
For a unit vector $x$, $|Px|^2+|P^\perp x|^2=1$, so
\[
x\in C_\varepsilon(\Pin)\iff |P^\perp x|^2\le\frac{\varepsilon^2}{1+\varepsilon^2}.
\]
Claims (ii) and (iii) follow immediately from part (i) in Lemma \ref{l:mu}.
If  $L\subset C_\varepsilon(\Pin)$, then 
$\mu_\Pin(L)\le\varepsilon^2/(1+\varepsilon^2)$; by Lemma \ref{l:mu}(ii),
$\sin^2\frac\theta2\le\varepsilon^2/(1+\varepsilon^2)$, i.e.\ $\varepsilon\ge\tan\frac\theta2$. This gives (i).
  If $L\cap C_\varepsilon(\Pin)=\{0\}$ then $|P^\perp x|^2>\varepsilon^2/(1+\varepsilon^2)$ for
all unit $x\in L$.  Averaging this strict pointwise inequality gives
$\mu_\Pin(L)>\varepsilon^2/(1+\varepsilon^2)$.  Hence
$\varepsilon^2/(1+\varepsilon^2)<\cos^2\frac\theta2$ by Lemma \ref{l:mu}(ii), i.e.\
$\varepsilon<\cot\frac\theta2$.  This gives (iv).
\end{proof}

\subsection{Sharp holomorphic  thresholds} 
We will now apply the linear algebra of the previous subsection to the tangent plane for a holomorphic curve and combine this with 
the results from Section \ref{s:tangent}.

The next corollary shows that  the quadratic
cone function $\rho^\Pin_\kappa$ is subharmonic for any holomorphic curve if
$\kappa\leq\tan^2(\theta/2)$ and superharmonic if
$\kappa\geq\cot^2(\theta/2)$, with both ranges sharp.

\begin{corollary}\label{l:bochner}
Let   $X:\Sigma\to\C^2$ a nonconstant holomorphic map.  Then
\[
\rho^\Pin_\kappa\circ X\ \text{ is subharmonic if }\ \kappa\le\tan^2\tfrac\theta2,
\qquad
\rho^\Pin_\kappa\circ X\ \text{ is superharmonic if }\ \kappa\ge\cot^2\tfrac\theta2 \, .
\]
\end{corollary}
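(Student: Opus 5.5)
The plan is to combine Lemma~\ref{l:minimal-bochner} with the pointwise bounds on the tilt-excess from Lemma~\ref{l:mu}. A nonconstant holomorphic map $X:\Sigma\to\C^2$ is conformal and harmonic, since holomorphic maps are conformal and their real coordinate functions are harmonic. Its critical points are isolated branch points. At a regular point $p$ the image $dX(T_p\Sigma)$ is a $J$-invariant two-plane, i.e.\ a complex line $L_p$. By Lemma~\ref{l:Emu}, $E_\Pin(p)=\mu_\Pin(L_p)$. Lemma~\ref{l:mu}(ii) then gives, at every regular point,
\[
\sin^2\tfrac\theta2\le E_\Pin\le\cos^2\tfrac\theta2 .
\]

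Next I plug these bounds into \eqref{e:minimal-bochner}, which reads $\Delta_\Sigma(\rho^\Pin_\kappa\circ X)=4\lambda^2[1-(1+\kappa)E_\Pin]$, with $\lambda^2\ge 0$.

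For subharmonicity, take $\kappa\le\tan^2\frac\theta2$. Then $1+\kappa\le\sec^2\frac\theta2$, so
\[
(1+\kappa)E_\Pin\le \sec^2\tfrac\theta2\cos^2\tfrac\theta2=1 .
\]
Hence the Laplacian is $\ge0$ at regular points.

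For superharmonicity, take $\kappa\ge\cot^2\frac\theta2$. Then $1+\kappa\ge\csc^2\frac\theta2$, so $(1+\kappa)E_\Pin\ge1$ and the Laplacian is $\le 0$. In the case $\theta=0$ we have $\cot^2\frac\theta2=\infty$, so this part is vacuous.

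The one point needing care is the branch points. Lemma~\ref{l:minimal-bochner} says the Laplacian vanishes there. Equivalently, $\rho^\Pin_\kappa\circ X$ is smooth on $\Sigma$ and its Laplacian in isothermal coordinates carries the factor $\lambda^2$, which vanishes at branch points. So the sign holds everywhere and no removable-singularity argument is needed. To be safe I would note directly that $\rho^\Pin_\kappa$ is a quadratic form, so the local-coordinate Laplacian of its composition with a harmonic map is $2\sum Q(X_\xi)+2\sum Q(X_\eta)$, a continuous expression that is $0$ when $X_\xi=X_\eta=0$.

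Sharpness, mentioned before the statement, also follows from Lemma~\ref{l:mu}. Linear maps onto $L_+$ realize $E_\Pin=\cos^2\frac\theta2$, and linear maps onto $L_-$ realize $E_\Pin=\sin^2\frac\theta2$. On these maps the inequalities flip as soon as $\kappa$ crosses the stated thresholds. I expect no real obstacle; the main step is simply identifying tangent planes with complex lines so that Lemma~\ref{l:mu} applies.
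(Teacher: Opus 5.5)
Your proposal is correct and follows the paper's proof essentially verbatim: it applies Lemma~\ref{l:minimal-bochner} together with the two-sided bound $\sin^2\frac\theta2\le E_\Pin\le\cos^2\frac\theta2$ from Lemma~\ref{l:mu}(ii) at each complex tangent line. Your handling of branch points is slightly more explicit than the paper's ``easy modifications'': you use the smoothness of $\rho^\Pin_\kappa\circ X$ and the vanishing factor $\lambda^2$, where the paper appeals to the conformal invariance of the sign of the Laplacian. It is the same argument.
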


\begin{proof}
Since conformal changes preserve the sign of the Laplacian in dimension two, we can assume that the map is an isometry (the case of branch points follows with easy modifications).  Lemma \ref{l:minimal-bochner}
gives that
\begin{equation} 
\Delta_{\Sigma} \bigl(\rho^\Pin_\kappa\circ X\bigr)
 =4 \, \Bigl[ 1- (1+\kappa) \, E_\Pin   \Bigr].
\end{equation}
If $ \kappa\le\tan^2\tfrac\theta2$, then  Lemma \ref{l:mu}(ii) gives that
\begin{align}
	1- (1+\kappa) \, E_\Pin \geq 1- \bigl(1+\tan^2 \frac{\theta}{2} \bigr) \, \cos^2 \frac{\theta}{2} = 0 \, .
\end{align}
If $ \kappa\ge\cot^2\tfrac\theta2$, then  Lemma \ref{l:mu}(ii) gives that
\begin{align}
	1- (1+\kappa) \, E_\Pin \leq 1- \bigl(1+\cot^2 \frac{\theta}{2} \bigr) \, \sin^2 \frac{\theta}{2} = 0 \, .
\end{align}
\end{proof}

\vskip1mm
We now prove nonexistence of holomorphic curves for $\varepsilon$ below the threshold $\tan \frac{\theta}{2}$
  (this addresses (i) in the introduction).

\begin{corollary}\label{p:nonexist}
Let $\Sigma$ be an open Riemann surface, $F:\Sigma\to\C^2$ a nonconstant proper
holomorphic map, and $\Pin$ a two-plane of K\"ahler angle $\theta$.  If
$F(\Sigma)\subset S_{\varepsilon,K}(\Pin)$ for some $K<\infty$, then
$\varepsilon\ge\tan\frac\theta2$. \end{corollary}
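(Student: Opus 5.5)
The plan is to combine Proposition~\ref{p:tangent-obstruction} with the pointwise lower bound on the tilt-excess of a holomorphic curve from Lemma~\ref{l:mu}. A holomorphic map $F:\Sigma\to\C^2$ is conformal and harmonic, since each complex coordinate is holomorphic and hence its real and imaginary parts are harmonic, and conformality follows from $J$-linearity of $dF$. At every regular point the tangent plane $dF(T_p\Sigma)$ is $J$-invariant, so it is a complex line $L$. Lemma~\ref{l:Emu} identifies $E_\Pin$ with $\mu_\Pin(L)$, and Lemma~\ref{l:mu}(ii) then gives
\[
 E_\Pin\ \ge\ \sin^2\tfrac\theta2=:\mu_0 \quad\text{at every regular point.}
\]

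First dispose of the edge case $\theta=0$. There the claimed conclusion $\varepsilon\ge\tan 0=0$ is vacuous, so there is nothing to prove. When $\theta>0$ we have $0<\mu_0\le\tfrac12<1$, so the hypotheses of Proposition~\ref{p:tangent-obstruction} hold: $F$ is nonconstant, proper, and conformal harmonic on an open Riemann surface, with $E_\Pin\ge\mu_0$ at all regular points and $F(\Sigma)\subset S_{\varepsilon,K}(\Pin)$. The proposition yields
\[
 \varepsilon\ \ge\ \sqrt{\frac{\mu_0}{1-\mu_0}}=\sqrt{\frac{\sin^2(\theta/2)}{\cos^2(\theta/2)}}=\tan\tfrac\theta2 .
\]

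There is no real obstacle here. The only point needing care is that branch points, where $dF=0$, are allowed; Proposition~\ref{p:tangent-obstruction} only requires the bound at regular points, and Lemma~\ref{l:minimal-bochner} handles branch points. The one check to make is that the tangent plane of a holomorphic curve at a regular point is indeed a complex line, so that Lemma~\ref{l:mu}(ii) applies. This is immediate, because $dF\circ j=J\circ dF$.
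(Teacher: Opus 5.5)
Your proposal is correct and follows the paper's proof: Lemma~\ref{l:mu}(ii) gives $E_\Pin\ge\sin^2\frac\theta2$ at every regular point, and feeding $\mu_0=\sin^2\frac\theta2$ into Proposition~\ref{p:tangent-obstruction} yields exactly $\varepsilon\ge\tan\frac\theta2$. You also treat $\theta=0$ separately, where the proposition's hypothesis $\mu_0>0$ fails but the conclusion is vacuous; the paper leaves this case implicit.
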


\vskip1mm
 In particular, for $\theta=\pi/2$ there is no proper holomorphic
curve contained in $S_{\varepsilon,K}(\Pin)$ with $\varepsilon<1$ and $K<\infty$ over a
Lagrangian plane.
\vskip1mm

\begin{proof}
The corollary follows by
arguing as in the first part of Corollary \ref{l:bochner} and applying
 Proposition \ref{p:tangent-obstruction}.
\end{proof}

The threshold in  the corollary is sharp since there is a complex line on the boundary of the critical cone.

\section{Superquadratic holomorphic disks}\label{s:examples}

In this section, we will prove a weak version of (iii) in Theorem \ref{t:angleversion}: Once we get above the threshold $\cot \frac{\theta}{2}$, there are 
holomorphic immersions with faster than quadratic area growth.  We will later show that the area growth can be arbitrarily fast.

We begin with the Lagrangian case.

\begin{theorem}\label{t:main}
Let $\Pin\subset(\R^4,J)$ be Lagrangian.  For every $\varepsilon>1$ there is a proper
$J$-holomorphic immersion $X:\D\to\R^4$ such that $X(\D)\subset C_\varepsilon(\Pin)$ and
\begin{equation}\label{e:growth}
 \lim_{R\to\infty}\frac{A_X(R)}{R^2\log R}=\infty .
\end{equation}
The induced metric is complete and the surface is 
area minimizing.
\end{theorem}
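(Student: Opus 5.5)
The plan is to reduce the cone condition to a condition on a single holomorphic function, and then to build the disk by an exhaustion argument.

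\textbf{Normalization.} After a unitary change of coordinates we may take $\Pin=\R^2\subset\C^2$, so that $\Pout=i\R^2$. Writing $z=x+iy$, the quadratic $q(z)=z_1^2+z_2^2$ satisfies $\Real q=|x|^2-|y|^2$. This makes $\rho^\Pin_1$ pluriharmonic, which matches the equality case of Corollary~\ref{l:bochner} at $\theta=\pi/2$.

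\textbf{Reduction to a sector.} Introduce the complex-linear coordinates $u=z_1+iz_2$ and $v=z_1-iz_2$. Then $q=uv$ and $|z|^2=\tfrac12(|u|^2+|v|^2)$. Since $|y|^2\le\varepsilon^2|x|^2$ is equivalent to
\[
\Real(uv)\ \ge\ -c_\varepsilon\,|z|^2,\qquad c_\varepsilon=\frac{\varepsilon^2-1}{\varepsilon^2+1}\in(0,1),
\]
and since $|uv|\le|z|^2$, it suffices that $uv$ takes values in the closed sector $\Sigma_\alpha=\{|\arg w|\le\pi/2+\alpha\}$ with $\sin\alpha\le c_\varepsilon$. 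The key point is that this sector has opening strictly larger than $\pi$. At $\varepsilon=1$ the opening would be exactly $\pi$, and then part~(ii) of Theorem~\ref{t:angleversion} forces algebraicity, so the room $\varepsilon>1$ is exactly what makes the construction possible.

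\textbf{Ansatz.} Fix a conformal map $W:\D\to\Sigma_\alpha^\circ$ and seek the curve in the form
\[
u=e^{h},\qquad v=W\,e^{-h},
\]
with $h$ holomorphic on $\D$. Then $uv=W\in\Sigma_\alpha$ automatically, so $X(\D)\subset C_\varepsilon(\Pin)$. Moreover $(u,v)$ is an immersion wherever $(h',W')\neq 0$, and $W'\neq0$ everywhere, so the map is always an immersion. Everything therefore reduces to choosing $h$ so that two things hold.
\begin{enumerate}
\item[(a)] Properness: $|u|^2+|v|^2=e^{2\Real h}+|W|^2e^{-2\Real h}\to\infty$ at $\partial\D$. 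It suffices that $\bigl|\Real h-\tfrac12\log|W|\bigr|\to\infty$ at the boundary (note that $|W|\to\infty$ only at one boundary point). Equivalently, we need a harmonic function $\phi=\Real h$ with $|\phi-\psi|\to\infty$ at $\partial\D$, where $\psi=\tfrac12\log|W|$ is harmonic off the preimage of $0$.
\item[(b)] Superquadratic growth: $A_X(R)/(R^2\log R)\to\infty$.
\end{enumerate}

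\textbf{Construction of $h$.} I will build $h$ inductively on an exhaustion of $\D$ by closed disks $\overline{D}_{r_j}$, with $r_j\uparrow1$. At step $j$ I use Runge's theorem to add a correction, small on $\overline{D}_{r_{j-1}}$, that makes $|\Real h-\psi|\ge j$ on the annulus $r_j\le|\zeta|\le r_{j+1}$.

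The main obstacle is this step: prescribing the sign of $\Real h-\psi$ near the boundary in a way compatible with harmonicity. A harmonic function cannot be $+\infty$ on the whole circle. I will therefore split each annulus into arcs on which $\Real h-\psi$ is pushed alternately to $+\infty$ and to $-\infty$, so that $|u|$ is large on some arcs and $|v|$ is large on the others. The transition regions between arcs are handled by noting that there $W$ itself can be made large. Concretely, I would first try to compose $W$ with a proper holomorphic function, so that $\psi$ already blows up along curves separating the arcs, and then use Mergelyan approximation on a labyrinth of arcs in the style of Jones and Alarcón--Forstneri\v{c}.

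The same inductive freedom gives (b). At each step, on the next annulus, we add a high-frequency term $\delta_j\zeta^{N_j}$ with $N_j$ large. This multiplies the area of $X$ inside the shell $B_{2R_j}\setminus B_{R_j}$ by an arbitrarily large factor while preserving the estimates from the earlier steps, and a diagonal choice of these factors forces \eqref{e:growth}.

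\textbf{Completeness and minimality.} Completeness of the induced metric follows from properness: a divergent path has infinite Euclidean length and hence infinite intrinsic length. Area minimization follows from Wirtinger's theorem, since $X$ is $J$-holomorphic.
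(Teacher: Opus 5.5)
\paragraph{Verdict.} There is a genuine gap: the proposal never shows that the map it builds is proper, and that step is the whole content of the theorem.

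\paragraph{What holds, and what is missing.} Your reduction is correct as far as it goes. With $u=z_1+iz_2$ and $v=z_1-iz_2$ one has $uv=z_1^2+z_2^2$ and $|uv|\le|z|^2$. So $uv\in\Sigma_\alpha$ with $\sin\alpha\le c_\varepsilon$ does place the curve in $C_\varepsilon(\Pin)$, and $(e^h,We^{-h})$ is automatically an immersion. But producing a \emph{proper} map is left at ``I would first try to\ldots''. The plan you sketch cannot be right as stated, because it never uses $\alpha>0$. Nothing in it changes if the open sector is replaced by the open right half-plane: not the Runge steps, not the alternating arcs, not making $|W|$ large on the transition set, not the Mergelyan approximation on a labyrinth. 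With the half-plane it would produce a proper holomorphic disc with $\rho_1\circ F=\Real(z_1^2+z_2^2)=\Real(uv)>0$. The converse half of Theorem~\ref{t:FG} forbids exactly this. So the transition step is where an opening $\pi+2\alpha>\pi$ must enter quantitatively, and that mechanism is what is missing; it is the substance of the Forstneri\v c--Globevnik construction.

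Relatedly, at $\varepsilon=1$ part (ii) of Theorem~\ref{t:angleversion} says nothing for a Lagrangian plane, since $\tan\frac\pi4=\cot\frac\pi4$. The obstruction there is the Forstneri\v c--Globevnik converse, not algebraicity.

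\paragraph{How the paper avoids this.} The paper takes the proper disc $G$ from \cite[Theorem~1.2]{FG} in a narrower cone $C_{\varepsilon_0}(\Pin)$, with $1<\varepsilon_0<\varepsilon$ and $\rho^\Pin_{\kappa_0}\circ G\ge\frac12$. It uses the room between $\varepsilon_0$ and $\varepsilon$ to absorb the perturbation $G+\zeta\xi_i$ that makes $G$ an immersion. It gets \eqref{e:growth} for free from Lemma~\ref{l:nonparabolic-growth}, since $\D$ is nonparabolic. Your high-frequency terms are therefore unnecessary for (b), and they would have to be added to $h$, not to $X$, to keep $uv=W$.

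\paragraph{Condition (a) is misstated.} Since $W$ takes values in the open sector it has no zeros, so $\psi=\frac12\log|W|$ is harmonic on all of $\D$. Then $|\Real h-\psi|\to\infty$ at $\partial\D$ is impossible: on a thin collar the harmonic function $\Real h-\psi$ would have a fixed sign, contradicting the maximum principle. The correct requirement is $\psi+|\Real h-\psi|\to\infty$, that is, $\max(|u|,|v|)\to\infty$.

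This requirement is delicate for two reasons:
\begin{itemize}
\item A harmonic function cannot tend to $+\infty$ at every point of a boundary arc (local Fatou theorem).
\item $\log|W|=\Real\log W$ has finite nontangential limits almost everywhere, because $\mathrm{Im}\log W$ is bounded.
\end{itemize}
Consequently, $\Real h-\psi$ must have radial limits $\pm\infty$ almost everywhere while being unbounded above and below near every boundary arc. Its zero set, along which $|W|$ must blow up, has to reach every boundary point while approaching almost every one only tangentially. Building such an $h$ together with a compatible $W$ is a genuine labyrinth construction, not a routine Runge or Mergelyan step, and your proposal does not carry it out.
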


The proper holomorphic disk in the cone is provided by \cite{FG}.  The additional
points here are that, by first working in a slightly narrower cone, the disk can
be perturbed to an immersion without leaving the prescribed cone, and that the
resulting proper immersion satisfies the area lower bound \eqref{e:growth}.

\subsection{An area lower bound}\label{s:lower}

 A manifold is parabolic if every positive superharmonic
function is constant; equivalently, it admits no positive Green's function
\cite{A}.  Otherwise, it is nonparabolic.

\begin{lemma}[Nonparabolic area growth]\label{l:nonparabolic-growth}
Let $X:\Sigma^2\to\R^n$ be a proper minimal immersion without boundary.  If  $\Sigma$ is
nonparabolic, then, for every $p\in\R^n$,
\[
 \lim_{R\to\infty}
 \frac{\Area\bigl(X^{-1}(B_R(p))\bigr)}{R^2\log R}=\infty .
\]
\end{lemma}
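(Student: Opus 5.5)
We argue by contradiction, using the classical criterion that a Riemannian surface with $\int^\infty \frac{dr}{A(r)}\, r\,dr$-type divergence is parabolic. More precisely, we use the extrinsic version: parabolicity follows from a capacity estimate, and the capacity of a condenser can be bounded above using extrinsic test functions.

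Fix $p$ and write $A(R)=\Area(X^{-1}(B_R(p)))$. Suppose the conclusion fails. Then there are $C_0<\infty$ and a sequence $R_j\to\infty$ with $A(R_j)\le C_0 R_j^2\log R_j$. The monotonicity formula for minimal surfaces, applied at points of the image, gives $A(R)/R^2$ nondecreasing. Hence $A(R)\le A(R_j)\,R^2/R_j^2$ for $R\le R_j$. This only bounds $A(R)$ by $C_0R^2\log R_j$ on $[1,R_j]$, which by itself is not enough. So the plan has two steps:
\begin{enumerate}
\item[(a)] Show that if $\Sigma$ is nonparabolic, the capacity of $X^{-1}(B_1(p))$ relative to $\Sigma$ is bounded below by some $c_0>0$. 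This holds: nonparabolicity means the capacity of a compact set with nonempty interior is positive, and $X^{-1}(\overline B_1)$ is compact by properness. (If it is empty, we translate $p$ or enlarge the radius.)
\item[(b)] Bound that capacity from above by $o(1)$ along $R_j$, using the test function $u=\phi(|X-p|)$ with logarithmic profile.
\end{enumerate}

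For step (b), take $\phi(r)=1$ for $r\le1$, $\phi(r)=0$ for $r\ge R$, and in between a profile $\phi=\psi(\log r)$. The gradient satisfies $|\nabla u|\le|\phi'(|X-p|)|$, since the tangential gradient of $|X-p|$ has norm at most $1$. Therefore
\[
\mathrm{Cap}\le\int_\Sigma|\phi'|^2
=\int_1^R\phi'(r)^2\,dA(r),
\]
which after integration by parts is at most $\int_1^R A(r)\,d(-(\phi')^2)$ plus a boundary term. Writing $t=\log r$ and $a(t)=A(e^t)e^{-2t}$, which is nondecreasing by monotonicity, we get $\int|\phi'|^2\lesssim\int_0^{T}\psi'(t)^2\,a(t)\,dt$ up to controllable terms, where $T=\log R$. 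Minimizing over $\psi$ with $\psi(0)=1$ and $\psi(T)=0$ gives $\bigl(\int_0^T dt/a(t)\bigr)^{-1}$. So parabolicity follows as soon as $\int^\infty dt/a(t)=\infty$. If $a(t)\le C_0 t$ for all large $t$, this integral diverges like $\log T$, contradicting (a).

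The main obstacle is that the assumed failure only gives $a(t_j)\le C_0t_j$ along a sequence, not for all $t$. Here monotonicity of $a$ helps: on $[0,t_j]$ we have $a\le C_0t_j$, so $\int_0^{t_j}dt/a\ge 1/C_0$. That is bounded, not divergent, so we need more care. We refine by passing to a sparse subsequence $t_{j_k}$ with $t_{j_{k+1}}\ge t_{j_k}^2$. On the interval $[t_{j_k},t_{j_{k+1}}]$ we have $a\le C_0t_{j_{k+1}}$, so each such block contributes $\ge (t_{j_{k+1}}-t_{j_k})/(C_0t_{j_{k+1}})\ge 1/(2C_0)$. Summing over infinitely many blocks gives divergence, and parabolicity follows.

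To make the integration by parts rigorous, we rather use the layer-cake form with the coarea inequality: $\int|\nabla u|^2\le\sum_k$ over dyadic annuli in $t$, with $\psi$ piecewise linear in $t$ on the blocks. The resulting upper bound is $\bigl(\sum_k 1/(2C_0)\bigr)^{-1}\to0$. This contradicts the positive capacity from (a), so the liminf, and hence the limit, is $\infty$.
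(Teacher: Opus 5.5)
Your proof is correct. Its second half is the paper's argument: you pass to a sparse subsequence with $t_{j_{k+1}}\ge 2t_{j_k}$ and use monotonicity of $a(t)=A(e^t)e^{-2t}$, so that each block contributes at least $1/(2C_0)$ to $\int dt/a$.

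The difference is in how you turn nonparabolicity into the integrability $\int^\infty dt/a(t)<\infty$. The paper uses completeness and cites Varopoulos' criterion for intrinsic balls. It then reduces to $p\in X(\Sigma)$ and uses $B^\Sigma_r(p)\subset X^{-1}(B_r(X(p)))$ to pass from intrinsic to extrinsic area. You instead reprove the criterion directly with the extrinsic test functions $\phi(|X-p|)$, which are admissible because $|X-p|$ is $1$-Lipschitz on $\Sigma$ and proper. You then build the sparse blocks into the test function itself.

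The rigorous version you sketch at the end does work. Let $\psi$ drop linearly by $1/m$ on each of $m$ blocks $[t_{j_k},t_{j_{k+1}}]$, of length $\ell_k\ge t_{j_{k+1}}/2\ge 1$. Integrating by parts,
\[
\int_{e^{t_{j_k}}}^{e^{t_{j_{k+1}}}} r^{-2}\,dA(r)\le a(t_{j_{k+1}})+2\int_{t_{j_k}}^{t_{j_{k+1}}} a\,dt\le(1+2\ell_k)\,a(t_{j_{k+1}}).
\]
So the energy on that block is at most $(m\ell_k)^{-2}\cdot 3\ell_k\cdot 2C_0\ell_k=6C_0/m^2$. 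The capacity of $X^{-1}(\overline{B}_{r_0}(p))$ is therefore at most $6C_0/m\to0$. The heuristic optimization over $\psi$ and the ``controllable terms'' are not needed.

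Your route is more self-contained: it works for any center $p$ without the reduction step or an explicit appeal to completeness. The cost is relying on the capacity characterization of parabolicity rather than the superharmonic/Green's function definition the paper uses, and doing by hand what the paper gets by citing Varopoulos.
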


\begin{proof}
Properness and the absence of boundary imply that the induced metric on $\Sigma$ is complete.
It suffices to assume that 
  $p\in \Sigma$.  Let $\cA_p(r)$ be the area of the intrinsic ball $B_r^{\Sigma}(p)$.  Set
\[
 A_p(R)=\Area\{q\in \Sigma:|X(q)-X(p)|<R\}.
\]
Euclidean distance is no greater than intrinsic distance, so
$B_r^{\Sigma}(p)\subset X^{-1}(B_r(X(p)))$ and hence $\cA_p(r)\leq A_p(r)$.  A result of Varopoulos \cite{Varopoulos} says that a complete nonparabolic surface has 
$\int_1^\infty r\,dr/\cA_p(r)<\infty$.  The preceding
comparison gives
\begin{equation}\label{e:vol}
 \int_1^\infty\frac{r\,dr}{A_p(r)}<\infty.
\end{equation}

By the monotonicity formula (see, for example, \cite[Corollary~1.13]{CMbook}),
$\Theta_p(R)=A_p(R)/R^2$ is nondecreasing.  Substituting $R=e^t$ in \eqref{e:vol} shows that the
nondecreasing function $\Phi(t)=\Theta_p(e^t)$ satisfies
\begin{equation}\label{e:vol3}
 \int_0^\infty\frac{dt}{\Phi(t)}<\infty.
\end{equation}
We claim that $\Phi(t)/t\to\infty$.  Otherwise there are $C<\infty$ and $t_j\to\infty$ such that,
after passing to a subsequence, $t_{j+1}\geq2t_j$ and $\Phi(t_j)\leq Ct_j$.  Monotonicity gives
\[
 \int_{t_j/2}^{t_j}\frac{dt}{\Phi(t)}
 \geq\frac{t_j/2}{\Phi(t_j)}\geq\frac1{2C}.
\]
The intervals $[t_j/2,t_j]$ are pairwise disjoint, contradicting \eqref{e:vol3}.  The
claim follows.
\end{proof}

\subsection{The Forstneri\v c--Globevnik input}   \label{s:cone}

Identify $\C^2$ with $\R^4$ by writing
\begin{equation}\label{e:ident}
(z_1,z_2)=(x_1+ix_3,\ x_2+ix_4)
\end{equation}
and define the complex structure $J$ by
\[
 J\partial_1=\partial_3,\qquad J\partial_2=\partial_4,\qquad
 J\partial_3=-\partial_1,\qquad J\partial_4=-\partial_2.
\]
The linear map $J$ is orthogonal and $J^2$ is minus the identity.
Note that under \eqref{e:ident} the plane $\Pin=\{x_3=x_4=0\}$ and its
orthogonal complement $\Pout=\{x_1=x_2=0\}$ are Lagrangian, i.e.\ they have K\"ahler angle $\pi/2$, and
$J\Pin=\Pout$.  For $\varepsilon>0$ define
\begin{equation}\label{e:rho}
\rho_\varepsilon(z_1,z_2)
=x_1^2+x_2^2-\varepsilon^{-2}\bigl(x_3^2+x_4^2\bigr),
\end{equation}
and set
\[
  \mathring C_\varepsilon(\Pin) =\{z\in\C^2:\rho_\varepsilon(z)>0\}.
\]
Thus $\mathring C_\varepsilon(\Pin)$ is exactly the interior of $C_\varepsilon(\Pin)$.

We use the following form of \cite[Theorem~1.2]{FG}, adapted to our application.

\begin{theorem}[Forstneri\v c--Globevnik]\label{t:FG}
Let $\varepsilon>1$, let $p\in \mathring C_\varepsilon(\Pin)$, and fix $0<r<1$ and $\tau>0$.  There is a
proper holomorphic map $F:\D\to\C^2$ such that
\[
\sup_{|\zeta|\le r}|F(\zeta)-p|<\tau
\]
and
\begin{equation}\label{e:FG}
\rho_\varepsilon\circ F>\rho_\varepsilon(p)-\tau \ \text{ on }\D,
\qquad
\rho_\varepsilon(F(\zeta))\longrightarrow\infty\ \text{ as }|\zeta|\to1 .
\end{equation}
Conversely, if $0<\varepsilon\leq1$, then $\rho_\varepsilon\circ F$ is unbounded below for every
proper holomorphic map $F:\D\to\C^2$.
\end{theorem}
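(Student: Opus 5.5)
The statement has two parts, and I would handle them separately. The existence part is essentially \cite[Theorem~1.2]{FG} in our notation. The converse is a short subharmonicity and maximum-principle argument, which reuses the computation from Corollary~\ref{l:bochner}.

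\emph{Existence for $\varepsilon>1$.} The function $\rho_\varepsilon$ is a real quadratic form on $\C^2$. Its Levi form is the Hermitian part of its complex Hessian. Writing $x_j=\Real z_j$ and $x_{j+2}=\operatorname{Im} z_j$, one has $\partial_{z_j}\partial_{\bar z_j}x_j^2=\tfrac12$ and $\partial_{z_j}\partial_{\bar z_j}x_{j+2}^2=\tfrac12$, with no mixed terms. Hence
\[
i\partial\bar\partial\rho_\varepsilon=\tfrac12(1-\varepsilon^{-2})\,\textstyle\sum_j i\,dz_j\wedge d\bar z_j .
\]
This is strictly positive exactly when $\varepsilon>1$, so $\rho_\varepsilon$ is strictly plurisubharmonic, and the sublevel-complement $\mathring C_\varepsilon(\Pin)$ is a pseudoconvex domain exhausted by the strictly psh function $\rho_\varepsilon$ (or by a convex function of it). This is exactly the setting of \cite[Theorem~1.2]{FG}. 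For a strictly psh exhaustion-type function $\rho$ on $\C^2$, a point $p$ and a compact disk $\{|\zeta|\le r\}$, that theorem produces a proper holomorphic disk $F$ that approximates the constant map $p$ on $\{|\zeta|\le r\}$, satisfies $\rho\circ F>\rho(p)-\tau$ everywhere, and has $\rho\circ F\to\infty$ at $\partial\D$.

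The work here is to check that the hypotheses match. First, the critical set of $\rho_\varepsilon$ is only the origin, and it is nondegenerate. Second, the Riemann–Hilbert/exposing-boundary-point scheme of \cite{FG} lifts $\rho$ along the boundary while only dropping it by $\tau$ in the interior. I would write $\rho_\varepsilon(p)>0$ and note that one may start from the constant disk at $p$. The main obstacle in this direction is the passage through the degenerate critical point at the origin and the indefinite (non-exhausting) nature of $\rho_\varepsilon$. It is not proper on $\C^2$, since it tends to $-\infty$ in the vertical directions. I would deal with this by quoting the version in \cite{FG} formulated for strongly psh functions whose sublevel behaviour is controlled on $\{\rho>c\}$, with $c=\rho_\varepsilon(p)-\tau>0$ if $\tau$ is small. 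There the origin is excluded and $\rho_\varepsilon$ has no critical points. Properness of $F$ then follows because $\rho_\varepsilon\circ F\to\infty$ forces $|F|\to\infty$.

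\emph{Converse for $\varepsilon\le1$.} The Levi form above is now $\le0$, so $\rho_\varepsilon$ is plurisuperharmonic. Therefore $u=\rho_\varepsilon\circ F$ is superharmonic on $\D$; equivalently, apply Lemma~\ref{l:minimal-bochner} with $\kappa=\varepsilon^{-2}\ge1=\cot^2(\pi/4)$ as in Corollary~\ref{l:bochner}. Suppose $u\ge -M$ on $\D$. On $\{u>0\}$ we have $|F|^2\le (1+\varepsilon^2)\,x_{\Pin}^2$, so the sets where $|F|$ is large and $u$ is bounded above lie in a region controlled by the lower bound.

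More directly, I would apply the argument of Proposition~\ref{p:tangent-obstruction}. The tilt-excess of any complex line relative to a Lagrangian plane equals $\tfrac12$ by Lemma~\ref{l:mu}(ii). So if $u$ were bounded below, then $F(\D)$ would lie in $\{|P^\perp x|^2\le\varepsilon^2|Px|^2+\varepsilon^2 M\}\subset S_{\varepsilon,K}(\Pin)$. Choosing $\varepsilon_0$ slightly larger than $\varepsilon$ gives $\kappa=\varepsilon_0^{-2}$, which is $<1$ when $\varepsilon<1$. Hence $\rho^\Pin_{\kappa}\circ F$ is superharmonic but not subharmonic, so the proposition does not apply directly. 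Instead I use $u$ itself for $\varepsilon\le1$. The function $v=-u$ is subharmonic and bounded above by $M$, and it tends to $-\infty$ along divergent sequences where $|Px|\to\infty$. The delicate case is $\varepsilon=1$, where $u$ is harmonic and the proposition's strictness fails. There I would combine this with Corollary~\ref{p:nonexist}: properness gives $|F|\to\infty$, and the bound $u\ge-M$ puts $F$ in $S_{1,\sqrt M}$. Then the minimum principle, applied on an exhaustion to the superharmonic $u-\delta|F|^2$ for small $\delta>0$, forces a contradiction as $\delta\to0$. I expect this borderline $\varepsilon=1$ case to be the only real subtlety of the converse.
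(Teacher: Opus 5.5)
Your existence half matches the paper: it is a direct application of \cite[Theorem~1.2]{FG} with $c=\varepsilon^{-2}<1$, a level $M$ satisfying $\rho_\varepsilon(p)-\tau\le M<\rho_\varepsilon(p)$, and the constant disk $h\equiv p$. As the paper uses it, that theorem applies to $\rho_\varepsilon$ itself, so your discussion of critical points and non-exhaustion is unnecessary.

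The problem is the converse. The paper also takes it from \cite{FG}; you try to prove it, and the argument does not work.

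For $0<\varepsilon<1$ there is a sign slip. If $\varepsilon<\varepsilon_0<1$, then $\kappa=\varepsilon_0^{-2}>1$, not $<1$. Hence $1-(1+\kappa)\cdot\frac12<0$. Proposition~\ref{p:tangent-obstruction} with $\mu_0=\frac12$ (equivalently, Corollary~\ref{p:nonexist} with $\theta=\pi/2$) then applies verbatim to $F(\D)\subset S_{\varepsilon,\varepsilon\sqrt{M}}(\Pin)$ and gives the contradiction. The substitute you adopt instead is wrong. The bounds $-u\le M$ and $|PF|\to\infty$ do not force $-u\to-\infty$: the lower bound on $u$ only confines $F$ to $\{\rho_\varepsilon\ge -M\}$, where $\rho_\varepsilon$ can stay bounded as $|x|\to\infty$ (it vanishes on $\partial C_\varepsilon(\Pin)$). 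Moreover, a subharmonic function bounded above on $\D$ gives no contradiction at all, since $\D$ is not parabolic.

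The genuine gap is $\varepsilon=1$, which you flag but do not resolve. Here $\rho_1=\Real(z_1^2+z_2^2)$ is pluriharmonic and every complex line has $E_\Pin=\frac12$. So Proposition~\ref{p:tangent-obstruction} gives exactly $\varepsilon\ge 1$, and there is no room to widen the cone. Your function $u-\delta|F|^2=(1-\delta)|PF|^2-(1+\delta)|P^\perp F|^2$ is indeed superharmonic. However, $\min_{\partial\Omega_j}(u-\delta|F|^2)\to+\infty$ only if $F$ eventually lies in a cone of aperture $\sqrt{(1-\delta)/(1+\delta)}<1$, which is the case already excluded. Under $u\ge -M$ alone this minimum may tend to $-\infty$, and then the minimum principle says nothing, for any $\delta$.

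What is missing is information about boundary behaviour, beyond the maximum principle. One route:
\begin{itemize}
\item Put $a=z_1+iz_2$ and $b=z_1-iz_2$, so that $ab=z_1^2+z_2^2$.
\item If $\Real(ab\circ F)\ge -M$, then $1/(ab\circ F+M+1)$ is bounded by $1$. Hence $ab\circ F$ has finite angular limits at a.e.\ point of $\partial\D$.
\item Since $|a|^2+|b|^2=2|F|^2\to\infty$ uniformly and truncated Stolz angles are connected, $a\circ F$ has angular limit $0$ or $\infty$ at each such point.
\item The Lusin--Privalov uniqueness theorem, applied to $a\circ F$ and to $1/(a\circ F)$, then forces $a\circ F\equiv 0$.
\item In that case $b\circ F=2F_1$ would be a proper holomorphic function $\D\to\C$, which is impossible.
\end{itemize}
Alternatively, simply cite the converse half of \cite[Theorem~1.2]{FG}, as the paper does.
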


To see why Theorem \ref{t:FG} follows from \cite{FG}, set $c=\varepsilon^{-2}<1$, choose
$M<\rho_\varepsilon(p)$, and take the constant disk $h\equiv p$.  The cited result
\cite[Theorem~1.2]{FG} asserts that conclusions (i)--(iii) of
\cite[Theorem~1.1]{FG} hold in this situation; these give,
respectively, the boundary divergence, the global lower bound, and the approximation on the
smaller disk.
In particular, the first two conclusions do not follow merely from approximation on compact
subsets.  Taking $p=(1,0)$ and, for example, $\tau=\frac12$, the resulting map satisfies
\begin{equation}\label{e:FGpositive}
\rho_\varepsilon\circ F\ge\frac{1}{2} \quad\text{on }\D\, .
\end{equation}

\subsection{Proof of Theorem \ref{t:main}}

\begin{proof}[Proof of Theorem \ref{t:main}]
Since $\varepsilon > 1$, we can choose $\varepsilon_0$ with $1< \varepsilon_0 < \varepsilon$.   Set 
$\kappa = \varepsilon^{-2}$  and $\kappa_0 = \varepsilon_0^{-2}$. 
Apply Theorem \ref{t:FG} with aperture $\varepsilon_0$, $p=(1,0)$ in $\Pi$, and $\tau = \frac{1}{2}$ to get
a proper holomorphic map $G:\D \to \C^2$ with 
\begin{align}	\label{e:FG1}
	\rho_{\kappa_0}^{\Pin} \circ G &\geq \rho_{\kappa_0}^{\Pin} (1,0) - \frac{1}{2} = \frac{1}{2} \, , \\
	\rho_{\kappa_0}^{\Pin} \circ G (\zeta) &\to \infty {\text{ as }} |\zeta| \to 1 \, . \label{e:FG2}
\end{align}
The latter implies that $\left| P\, G(\zeta) \right| \to \infty$ as $|\zeta| \to 1$. 

The cone $C_{\varepsilon_0}(\Pin)$ is  the set  $\{ \rho_{\kappa_0}^{\Pin} \geq 0\}$. Thus, the proper holomorphic
 map $G$ is contained in the cone $C_{\varepsilon_0}(\Pin) \subset C_{\varepsilon}(\Pin)$.  However, $G$ might not be an immersion since there may be points $\zeta \in \D$ where $G'(\zeta) = 0$.  We will make a small linear perturbation of $G$ to eliminate any zeros, giving an immersion, 
 and show that this perturbation is in the larger cone $ C_{\varepsilon}(\Pin)$.
 
 The image of the map $G':\D \to \C^2$ is a countable union of smooth images of compact two-dimensional sets, so it has four dimensional Lebesgue measure zero.  Thus, it cannot contain any neighborhood of $0 \in \C^2$, and we can choose a sequence $\xi_i \in \C^2$ so that
 $$
 	|\xi_i| \to 0 {\text{ and }} -\xi_i \notin G'(\D) \, .
 $$
 It follows that the maps $G_i = G + \zeta \, \xi_i$ are holomorphic immersions.  Since $|\zeta \, \xi_i| \to 0$, these maps are also proper. 
 
  It remains to show that $\rho_{\kappa}^{\Pin} \circ G_i > 0$ for $i$ sufficiently large. Using the absorbing inequality, we have
  for any $\delta > 0$ that 
  \begin{align}
  	\rho_{\kappa}^{\Pin} \circ G_i   &= \left| P \, G_i  \right|^2 - \kappa \,  \left| P^{\perp} \, G_i  \right|^2 \notag \\
	&\geq  (1-\delta) \, \left| P \, G \right|^2 - \frac{|\xi_i|^2}{\delta} 
	- \kappa \,  (1+\delta) \, \left| P^{\perp} \, G  \right|^2 - \kappa \, \left( |\xi_i|^2 + \frac{|\xi_i|^2}{\delta} \right) \\
	&\geq (1-\delta) \, \left| P \, G \right|^2 - \frac{|\xi_i|^2}{\delta} 
	- \frac{\kappa}{\kappa_0}\,  (1+\delta) \, \left| P \, G  \right|^2 - \kappa \, \left( |\xi_i|^2 + \frac{|\xi_i|^2}{\delta} \right) \, , \notag
  \end{align}
  where the last inequality used that $	\rho_{\kappa_0}^{\Pin} \circ G \geq 0$.  Since $\kappa < \kappa_0$, we can choose $\delta > 0$ sufficiently small to get a constant $\beta > 0$ so that 
   \begin{align}
  	\rho_{\kappa}^{\Pin} \circ G_i   \geq \beta\, \left| P \, G \right|^2 - \frac{|\xi_i|^2}{\beta}  \, .
  \end{align}
  Since we have the lower bound $ \left| P \, G \right|^2 \geq \frac{1}{2}$ and $\xi_i \to 0$, this completes the existence proof.
  
  The area growth (\ref{e:growth})
  follows from Lemma  \ref{l:nonparabolic-growth}
since the surface is conformal to a disk and thus nonparabolic.  It is area-minimizing by Wirtinger's inequality.  Finally, it is complete since it is a proper immersion.
\end{proof}

\subsection{Super-quadratic growth for arbitrary K\"ahler angle}

Using the next lemma, the previous argument generalizes to give examples  in $S_{\varepsilon,K}$ for any $\varepsilon  > \cot \frac{\theta}{2}$ for arbitrary K\"ahler angle $\theta$.

\begin{lemma}[Transport from the Lagrangian case]\label{l:transport}
Let $\Pin_0=\R^2\subset\C^2$ be the standard Lagrangian plane, and let
$\Pin\subset\C^2$ be a two-plane of K\"ahler angle
$0<\theta\leq\pi/2$.  If $\eta>0$ and $\eta\cos\theta<1$, then there is
a complex-linear isomorphism $T\in GL(2,\C)$ such that
\[
 T(\Pin_0)=\Pin
 \qquad\text{and}\qquad
 T\bigl(C_\eta(\Pin_0)\bigr)
 \subset C_{\nu_\theta(\eta)}(\Pin),
\]
where
\[
 \nu_\theta(\eta)
 =\frac{\eta\sin\theta}{1-\eta\cos\theta}.
\]
Moreover, if $f:\Sigma\to\C^2$ is a complete proper holomorphic
immersion, then $T\circ f$ has the same properties, and
\[
 \frac{A_f(R)}{R^2\log R}\longrightarrow\infty
 \quad\Longrightarrow\quad
 \frac{A_{T\circ f}(R)}{R^2\log R}\longrightarrow\infty.
\]
\end{lemma}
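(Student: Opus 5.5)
The plan is to write down $T$ explicitly using the normal form of Lemma~\ref{l:normal}, and then to read off the cone inclusion from the resulting formulas for $P\,Tx$ and $P^\perp Tx$. Let $f_1,f_2$ and $e_1,\dots,e_4$ be as in Lemma~\ref{l:normal} for $\Pin$, and let $\partial_1,\partial_2$ be the standard basis of $\Pin_0=\R^2$, so that $J\partial_1,J\partial_2$ span $\Pin_0^\perp=J\Pin_0$. Define $T$ to be the unique complex-linear map with
\[
T\partial_1=e_1=f_1,\qquad T\partial_2=e_2=\cos\theta\,Jf_1+\sin\theta\,f_2 .
\]
Since $\sin\theta>0$, the vectors $e_1,e_2$ are $\C$-linearly independent, because $e_2\notin\mathrm{span}_\C\{f_1\}=\mathrm{span}_\R\{f_1,Jf_1\}$. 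Hence $T\in GL(2,\C)$. Moreover $T$ maps $\Pin_0$ isometrically onto $\Pin$.

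Next I compute the images of the vertical directions. We have $TJ\partial_1=Jf_1$ and $TJ\partial_2=Je_2=-\cos\theta\,f_1+\sin\theta\,Jf_2$. Inverting the formulas of Lemma~\ref{l:normal} gives
\[
Jf_1=\cos\theta\,e_2+\sin\theta\,e_3,\qquad f_2=\sin\theta\,e_2-\cos\theta\,e_3,\qquad Jf_2=e_4 .
\]
So for $x=a+Jb$ with $a=(a_1,a_2)\in\Pin_0$ and $b=(b_1,b_2)\in\R^2$,
\[
P\,Tx=(a_1e_1+a_2e_2)+\cos\theta\,(b_1e_2-b_2e_1),\qquad P^\perp Tx=\sin\theta\,(b_1e_3+b_2e_4).
\]
This yields $|P^\perp Tx|=\sin\theta\,|b|$ and $|P\,Tx|\ge|a|-\cos\theta\,|b|$. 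Now take $x\in C_\eta(\Pin_0)$, i.e.\ $|b|\le\eta|a|$. Then $|P\,Tx|\ge(1-\eta\cos\theta)|a|>0$ and $|P^\perp Tx|\le\eta\sin\theta\,|a|$. It follows that $|P^\perp Tx|\le\nu_\theta(\eta)\,|P\,Tx|$, which is the claimed inclusion $T(C_\eta(\Pin_0))\subset C_{\nu_\theta(\eta)}(\Pin)$. The only step needing care is this bookkeeping of the projections. The identities $Je_2=-\cos\theta f_1+\sin\theta e_4$ and the expression for $Jf_1$ in the $e$-basis are the key inputs, and both follow directly from Lemma~\ref{l:normal}.

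For the \emph{moreover} part, I use that $T$ is a complex-linear isomorphism. Hence $T\circ f$ is holomorphic, is an immersion since $d(T\circ f)=T\circ df$, and is proper since $T$ is a homeomorphism of $\C^2$. Write $\sigma=\|T^{-1}\|^{-1}>0$. Then the induced metrics satisfy $\sigma^2 g_f\le g_{T\circ f}\le\|T\|^2 g_f$. In particular they are bi-Lipschitz equivalent, so completeness is preserved. The area elements satisfy $dA_{T\circ f}\ge\sigma^2\,dA_f$, since a linear map stretches $2$-dimensional area by at least the square of its smallest singular value. Also $f^{-1}(B_{R/\|T\|})\subset(T\circ f)^{-1}(B_R)$. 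Together these give
\[
A_{T\circ f}(R)\ge\sigma^2A_f\bigl(R/\|T\|\bigr).
\]
Dividing by $R^2\log R$ and using $\log(R/\|T\|)/\log R\to1$ then shows that the hypothesis $A_f(R)/(R^2\log R)\to\infty$ implies the same for $T\circ f$.
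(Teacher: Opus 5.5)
Your proof is correct and follows the paper's argument. The paper extends an arbitrary real isometry $B:\Pi_0\to\Pi$ complex-linearly by $T(x+iy)=Bx+JBy$ and uses $|PJq|=\cos\theta\,|q|$ and $|P^\perp Jq|=\sin\theta\,|q|$ for $q\in\Pi$, which is exactly what your explicit normal-form computation verifies, and your area comparison $A_{T\circ f}(R)\ge \|T^{-1}\|^{-2}A_f(R/\|T\|)$ is the same as the paper's (the strict inequality $|P\,Tx|>0$ in your write-up needs $x\neq0$, but $x=0$ lies in the cone trivially).
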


\begin{proof}
Choose a real isometry $B:\Pin_0\to\Pin$ and extend it complex-linearly by
\[
 T(x+iy)=Bx+JBy,\qquad x,y\in\Pin_0.
\]
Since $\theta>0$, we have that 
$\Pin\cap J\Pin=\{0\}$.  It follows that $T$ is injective and hence is in 
$GL(2,\C)$.

Let $P$ and $P^\perp$ be the orthogonal projections onto $\Pin$ and
$\Pin^\perp$. Suppose that $x$, $y\in \Pi_0$ and $x+iy\in C_\eta(\Pin_0)$ so $|y|\leq\eta|x|$.  We need to show that $T(x+iy)\in C_{\nu_\theta(\eta)}(\Pin)$.  Set $p=Bx$ and $q=By$, then since $B$ is an isometry 
\[
 |PJq|=\cos\theta\,|q|=\cos\theta\,|y|,
 \qquad
 |P^\perp Jq|=\sin\theta\,|q|=\sin\theta\,|y|\leq\eta\sin\theta\,|x|.
\]
 and
\[
 |P^\perp T(x+iy)|=|P^\perp(p+J\,q)|
 =|P^\perp Jq|
 \leq\eta\sin\theta\,|x|,
\]
whereas
\[
 |PT(x+iy)|
 =|p+PJq| \geq |p|-\cos\theta\,|y|
 = |x|-\cos\theta\,|y|
 \geq(1-\eta\cos\theta)|x|.
\]
Since $\eta\cos\theta<1$, these inequalities give
\[
 |P^\perp T(x+iy)|
 \leq \nu_\theta(\eta)\,|PT(x+iy)|,
\]
which proves the cone inclusion.

Now let $F=T\circ f$.  Since $T$ is an invertible complex-linear map,
$F$ is again a proper holomorphic immersion.  Set 
$m=\|T^{-1}\|^{-1}$ and $M=\|T\|$,  
then
\[
 m|z|\leq|Tz|\leq M|z|.
\]
The corresponding induced metrics are uniformly equivalent, so completeness
is preserved.  Moreover,
\[
 f^{-1}\bigl(B_{R/M}(0)\bigr)
 \subset F^{-1}\bigl(B_R(0)\bigr),
\]
and the two-dimensional Jacobian of $T$ on every plane is at least $m^2$.
Therefore
\[
 A_F(R)\geq m^2 A_f(R/M).
\]
It follows that
\[
 \frac{A_F(R)}{R^2\log R}
 \geq
 \frac{m^2}{M^2}\,
 \frac{\log(R/M)}{\log R}\,
 \frac{A_f(R/M)}{(R/M)^2\log(R/M)},
\]
which proves the final assertion.
\end{proof}

We can now deduce the angle-dependent existence theorem directly from the
Lagrangian case.  

\begin{theorem}	\label{t:weakiii}
If $\varepsilon > \cot \frac{\theta}{2}$, then there exists a proper holomorphic immersion $F:\D \to \C^2$ whose image is contained in $C_{\varepsilon} (\Pin)$ with 
\begin{align}
	\frac{A_F (R)}{R^2 \, \log R} \to \infty \, .
\end{align}
\end{theorem}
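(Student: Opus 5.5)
The plan is to combine Theorem \ref{t:main} (the Lagrangian case) with the transport Lemma \ref{l:transport}. The only real work is to choose the Lagrangian aperture $\eta>1$ so that $\nu_\theta(\eta)\le\varepsilon$ and $\eta\cos\theta<1$.

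First, the edge case. If $\theta=0$, then $\cot\frac\theta2=\infty$ and there is nothing to prove, so assume $0<\theta\le\pi/2$. Evaluate the transport function at the borderline $\eta=1$:
\[
 \nu_\theta(1)=\frac{\sin\theta}{1-\cos\theta}=\frac{2\sin\frac\theta2\cos\frac\theta2}{2\sin^2\frac\theta2}=\cot\frac\theta2 .
\]
Also $1\cdot\cos\theta<1$ since $\theta>0$. The function $\eta\mapsto\nu_\theta(\eta)$ is continuous and increasing on $[0,1/\cos\theta)$, interpreted as $[0,\infty)$ when $\theta=\pi/2$. Hence, given $\varepsilon>\cot\frac\theta2=\nu_\theta(1)$, we can choose $\eta>1$ close to $1$ with $\eta\cos\theta<1$ and $\nu_\theta(\eta)<\varepsilon$.

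Next, apply Theorem \ref{t:main} with this $\eta>1$ and the standard Lagrangian plane $\Pin_0$. This gives a complete proper holomorphic immersion $f:\D\to\C^2$ with $f(\D)\subset C_\eta(\Pin_0)$ and $A_f(R)/(R^2\log R)\to\infty$. Then take $T\in GL(2,\C)$ from Lemma \ref{l:transport} and set $F=T\circ f$. The lemma gives the following:
\begin{itemize}
\item $F(\D)\subset T(C_\eta(\Pin_0))\subset C_{\nu_\theta(\eta)}(\Pin)\subset C_\varepsilon(\Pin)$, where the last inclusion holds because the cones are nested in the aperture.
\item $F$ is again a proper holomorphic immersion.
\item The superquadratic growth $A_F(R)/(R^2\log R)\to\infty$ is preserved.
\end{itemize}

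The only step needing care is the identification $\nu_\theta(1)=\cot\frac\theta2$ together with the strict monotonicity that lets us pass to $\eta>1$. The derivative
\[
 \nu_\theta'(\eta)=\frac{\sin\theta}{(1-\eta\cos\theta)^2}>0
\]
settles the monotonicity, so no genuine obstacle remains; all the analytic content sits in Theorem \ref{t:main} and Lemma \ref{l:transport}.
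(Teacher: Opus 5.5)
Your proposal is correct and follows the paper's proof: choose a Lagrangian aperture $\eta>1$ with $\eta\cos\theta<1$ and $\nu_\theta(\eta)<\varepsilon$, apply Theorem~\ref{t:main} to $\Pin_0$, and transport the resulting disk by Lemma~\ref{l:transport}. The only cosmetic difference is that the paper writes down the admissible range explicitly as $1<\eta<\varepsilon/(\sin\theta+\varepsilon\cos\theta)$, whereas you obtain such an $\eta$ from $\nu_\theta(1)=\cot\frac\theta2$ together with the continuity and monotonicity of $\nu_\theta$.
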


\begin{proof}
Suppose that
\[
 \varepsilon>\cot\frac{\theta}{2}=\frac{\sin\theta}{1-\cos\theta}\,  .
\]
We may choose
\[
 1<\eta<
 \frac{\varepsilon}{\sin\theta+\varepsilon\cos\theta}.
\]
This choice of $\eta$ implies both $\eta\cos\theta<1$ and 
 $\nu_\theta(\eta)<\varepsilon$.  
Apply Theorem~\ref{t:main} to the Lagrangian plane $\Pin_0$ with aperture
$\eta$, and then apply Lemma~\ref{l:transport}.  The resulting complete proper
holomorphic immersion is area-minimizing, lies in $C_\varepsilon(\Pin)$ and has area growth faster
than $R^2\log R$.  
\end{proof}

\section{Rapid growth holomorphic disks}	\label{s:rapid}

In this section we prove Theorem \ref{t:angleversion}(iii) by showing that if $\varepsilon>\cot(\theta/2)$, then   the cone 
$C_\varepsilon(\Pin)$ contains a complete proper holomorphic immersion of the
disk whose area grows arbitrarily fast. In the previous section, we proved the existence of superquadratic examples in this range. 
Now
 we show that the construction can be modified to get arbitrarily large area growth.

\begin{theorem}[Rapid area growth above the upper threshold]
\label{t:rapid}
Suppose that 
 $\varepsilon>\cot\frac\theta2$.  
For every nondecreasing function $\Phi:[1,\infty)\to(0,\infty)$ there is a
complete proper holomorphic immersion $F:\D\longrightarrow\C^2$  
and radii $R_j\to\infty$ such that
\[
 F(\D)\subset C_\varepsilon(\Pin)
 \qquad\text{and}\qquad
 A_F(R_j)\geq\Phi(R_j)
\]
for every $j$.  In particular, one may arrange that $A_F$ has no polynomial
upper bound.
\end{theorem}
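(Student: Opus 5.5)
By Lemma~\ref{l:transport}, exactly as in the proof of Theorem~\ref{t:weakiii}, it suffices to treat the Lagrangian plane $\Pin_0$ with an aperture $\eta>1$. Fix $\eta$ with $1<\eta<\varepsilon/(\sin\theta+\varepsilon\cos\theta)$. Then $T(C_\eta(\Pin_0))\subset C_\varepsilon(\Pin)$, and $A_{T\circ f}(R)\ge m^2A_f(R/M)$. So if we produce $f$ with $A_f(R_j')\ge \Phi(MR_j')/m^2$ and put $R_j=MR_j'$, we get the conclusion. We may replace $\Phi$ by the nondecreasing function $\tilde\Phi(R)=\Phi(MR)/m^2$ and work entirely in the Lagrangian picture with $\rho_{\eta_0}$ for some $1<\eta_0<\eta$.

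The plan is to build $f$ as a limit of the Forstneri\v c--Globevnik construction, done iteratively on an exhaustion $r_1<r_2<\dots\to1$ of $\D$. At stage $j$ we have a holomorphic map $F_j$ on a neighborhood of $\overline{D_{r_j}}$ with $\rho_{\eta_0}\circ F_j>c_j$, with the constants $c_j$ increasing and bounded below by a fixed positive number. We first insert \emph{extra area}: in the cone, far out near a point $p_j\in\Pi_0$ with $|p_j|$ large, we attach a holomorphic ``bubble''. Concretely, on a small disk near the boundary of $D_{r_j}$ we modify the map by adding a term like $\lambda_j g_j(\zeta)$, with $g_j$ a high-degree polynomial and $\lambda_j\in\C^2$ pointing along a complex line that meets the interior of the cone. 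Corollary~\ref{c:lines}(iv) guarantees such lines exist once the aperture is at least $1=\cot(\pi/4)$; we take the direction $\lambda_j\in\Pi_0$ itself, which lies in $\mathring C_{\eta_0}$ for real multiples. The key point is that the high-degree polynomial wraps a disk many times, which raises the area inside a fixed ball $B_{R_j}$ to at least $\tilde\Phi(R_j)$ while keeping the image inside $\{\rho_{\eta_0}>c_j-\tau_j\}$. This is easiest to arrange by Runge/Mergelyan approximation: on a union of the compact set $\overline{D_{r_j}}$ and a disjoint small closed disk $\Delta_j$ (connected to it by an arc mapped into the interior of the cone), prescribe a map that is $F_j$ on $\overline{D_{r_j}}$ and a many-sheeted covering of a small complex-line disk centered at $p_j$ on $\Delta_j$. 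Then approximate by a holomorphic map on $\D$. We then apply the Forstneri\v c--Globevnik theorem (in its approximation-plus-lower-bound form, conclusions (i)--(iii) of \cite[Theorem~1.1]{FG} for a general initial disk $h$ rather than a constant) to push the boundary of a slightly larger disk $D_{r_{j+1}}$ outward, so that $\rho_{\eta_0}\circ F_{j+1}>c_{j+1}$ on $D_{r_{j+1}}\setminus D_{r_j}$. At the same time $F_{j+1}$ stays $\tau_j$-close to the previous map on $\overline{D_{r_j}}\cup\Delta_j$, with $\sum\tau_j$ small. Area in $B_{R_j}$ is lower semicontinuous under $C^1$-small perturbations on compacta, so the area gained at stage $j$ survives in the limit if the $\tau_k$ for $k>j$ are chosen small enough.

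In the limit $f=\lim F_j$ is holomorphic on $\D$ with $\rho_{\eta_0}\circ f\ge\frac12$. We need $\rho_{\eta_0}\circ f\to\infty$ at $\partial\D$, which gives properness; this is arranged by taking $c_j\to\infty$ on the annuli while controlling the errors. The limit also satisfies $A_f(R_j)\ge\tilde\Phi(R_j)$. Finally we perturb $f$ by $\zeta\xi$ to an immersion inside $C_\eta(\Pi_0)$, exactly as in the proof of Theorem~\ref{t:main}; choosing $\xi$ small keeps the area lower bounds at the finitely many radii we care about, and really at all of them, since the area perturbation can be summed. Completeness follows from properness.

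The main obstacle is the inductive step. It combines two requirements: the area-adding bubble has to stay in the cone, and the FG boundary push must be applied to a nonconstant initial disk while keeping the lower bound $\rho_{\eta_0}>c_j-\tau_j$ \emph{globally}, not merely on compacta. I would rely on the full statement of \cite[Theorem~1.2]{FG} with general $h$, and check that the bubble construction leaves $\rho_{\eta_0}$ bounded below on all of the new disk.
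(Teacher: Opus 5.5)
Your reduction to the Lagrangian plane through Lemma~\ref{l:transport}, with $\tilde\Phi(R)=\Phi(MR)/m^2$, is correct and is also the paper's first step. The gap is in the inductive step. You flag it yourself, and the mechanism you propose cannot close it.

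A Mergelyan approximant $G$ of your data on $K_j=\overline{D_{r_j}}\cup(\text{arc})\cup\Delta_j$ is controlled only on $K_j$. On the rest of $\overline{D_{r_{j+1}}}$ it may leave the cone, in particular on the circle $\partial D_{r_{j+1}}$ that the Forstneri\v c--Globevnik step is supposed to push outward. Since $\rho_{\eta_0}$ is plurisubharmonic for $\eta_0>1$, $\rho_{\eta_0}\circ G$ is subharmonic, so nothing transports a lower bound from $K_j$ to the rest of the disk. Meanwhile the FG boundary lifting uses discs of radius $c\sqrt{\rho\circ f}$ at the boundary values. It therefore needs $\rho\circ h$ bounded below by a positive constant along the boundary circle of the initial disk before it can produce the global bound (ii).

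Adding the circle to the compact set does not help. Its complement in a slightly larger disk then has a relatively compact component, and Mergelyan's theorem no longer applies. Your first description, adding $\lambda_j g_j$ ``on a small disk'', has the same defect, since a polynomial added to $F_j$ changes it on the whole disk.

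So the one estimate the construction needs, a lower bound for $\rho_{\eta_0}$ on the entire new disk, is exactly the one left unproved. Your scheme might be rescued by restricting $G$ to a thin simply connected neighborhood of $K_j$ and reparametrizing by a Riemann map at every stage. That would need its own bookkeeping for properness and for the conformal type of the limit domain, and none of it is in the proposal.

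The paper sidesteps all of this by working inside the FG iteration. There each $f_k$ is a polynomial with $\rho\circ f_k>(1+a)^k$ for $r_k\le|z|\le1$, with $\rho\circ f_k>\rho\circ f_{k-1}-\epsilon_{k-1}$, and with $|f_k|\le b^k$ on all of $\overline{\D}$. To $f_k$ it adds the packet $q=\delta\zeta^Na$ from Lemma~\ref{l:packet}. This packet has three properties:
\begin{enumerate}
\item Its sup norm on the whole closed disk is at most $\delta$, so every one of these strict inequalities survives.
\item It is $C^1$-small on $D_{r_{k-1}}$, so area added at earlier stages is not disturbed.
\item It still carries large area: $\|q'\|_{L^2(D_{1-1/N})}^2=\pi\delta^2N(1-1/N)^{2N}\to\infty$.
\end{enumerate}
The packet is inserted while the map is still bounded on $\overline{\D}$. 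Hence the new area lies in $B_{R_k}$ with $R_k=b^k+1$, a radius fixed before $N$ is chosen. The subsequent stages are $C^1$-close on $D_{r_k}$, so the area stays there. No interpolation on disjoint sets is needed, and no new control of $\rho$ away from a compact set is ever required.
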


The last assertion follows, for example, by taking $\Phi(R)=e^R$.  
``No polynomial upper bound'' means that there do not exist finite constants
$C$ and $d$ for which $A_F(R)\leq CR^d$ for all sufficiently large $R$.

The starting point is the holomorphic disk constructed in Theorem \ref{t:main}.  To this map,
 we add a high degree perturbation that is small in $C^0$ norm but carries large area (this large area is guaranteed by the high degree).  This procedure can then be repeated further and further out to get rapid area growth yet the map remains
  in the cone $C_{\varepsilon}(\Pi)$ as the $C^0$ difference is small.

\subsection{Polynomial perturbation}

We set $D_r=\{\zeta\in\C:|\zeta|<r\}$.  

\begin{lemma} 
\label{l:packet}
Let  $f$ be holomorphic on a neighborhood
of $\overline\D$. Given $M > 1$, $\beta > 0$ and  $0<r<1$, there exists a holomorphic polynomial   $q$ and 
$s\in(r,1)$ such that
\begin{enumerate}
\item $\|q\|_{C^0(\overline\D)} \leq \beta$;
\item $\|q\|_{C^1(\overline D_r)} \leq \beta$;
\item 
 $\operatorname{Area}_{f+q}(D_s)>M.$
\end{enumerate}
\end{lemma}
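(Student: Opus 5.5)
The plan is to take $q$ to be a small multiple of a single high-degree monomial, placed in one coordinate direction. Fix a unit vector $e\in\C^2$ and set
\[
 q(\zeta)=\beta\,\zeta^N e ,
\]
with $N$ large, to be chosen.

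Conditions (1) and (2) are nearly automatic. On $\overline\D$ we have $|q|\le\beta$, which gives (1). On $\overline D_r$ we have $|q|\le\beta r^N$ and $|q'|\le\beta N r^{N-1}$. Since $r<1$, $N r^{N-1}\to0$ as $N\to\infty$, so (2) holds for all large $N$. If one prefers a strict inequality in (1), replace $\beta$ by $\beta/2$ throughout.

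For (3), the area of a holomorphic map $g:D_s\to\C^2$ counted with multiplicity is $\int_{D_s}|g'|^2\,dA$, since the Jacobian of a holomorphic map is $|g'|^2$. Apply this to $g=f+q$ and expand:
\[
 \operatorname{Area}_{f+q}(D_s)=\int_{D_s}|f'|^2+2\Real\int_{D_s}\langle f',q'\rangle+\int_{D_s}|q'|^2 .
\]
The first term is nonnegative. The last term is computed in polar coordinates:
\[
 \int_{D_s}|q'|^2=\beta^2N^2\int_{D_s}|\zeta|^{2N-2}\,dA=\pi\beta^2 N s^{2N}.
\]
The main obstacle is the cross term. I would handle it by orthogonality of monomials. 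Since $f$ is holomorphic near $\overline\D$, expand $f'(\zeta)=\sum_k a_k\zeta^k$ with $a_k\in\C^2$, convergent on $\overline\D$. The monomials $\zeta^k$ are orthogonal in $L^2(D_s)$, so
\[
 \int_{D_s}\langle f',q'\rangle=\langle a_{N-1},\,\beta N e\rangle\,\frac{\pi s^{2N}}{N}.
\]
Its modulus is at most $\pi\beta s^{2N}|a_{N-1}|$, and $a_{N-1}\to0$ as $N\to\infty$. Alternatively, Cauchy--Schwarz gives the bound $\|f'\|_{L^2(\D)}\,\|q'\|_{L^2(D_s)}$, which grows only like $\sqrt N$ and is therefore dominated by the $N$-growth of the last term.

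Hence
\[
 \operatorname{Area}_{f+q}(D_s)\ \ge\ \pi\beta s^{2N}\bigl(\beta N-2|a_{N-1}|\bigr).
\]
Now fix $s\in(r,1)$ depending on $N$, for instance $s=(1/2)^{1/(2N)}$, so that $s^{2N}=1/2$. This $s$ exceeds $r$ once $N$ is large. The area is then at least roughly $\tfrac{\pi}{2}\beta^2N-O(1)$, which exceeds $M$ for $N$ large. Choosing $N$ large enough that (2) and (3) hold simultaneously finishes the proof.
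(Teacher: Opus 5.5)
Your proposal is correct and follows essentially the same route as the paper: take a small multiple of a single high-degree monomial $\delta\zeta^N a$, compute $\|q'\|_{L^2(D_s)}^2=\pi\delta^2Ns^{2N}$ with $s\to1$ chosen so that $s^{2N}$ stays bounded below, and show that the contribution of $f'$ is dominated. The only differences are cosmetic. The paper takes $s_N=1-N^{-1}$ and uses the reverse triangle inequality $\|(f+q)'\|_{L^2(D_s)}\ge\|q'\|_{L^2(D_s)}-\|f'\|_{L^2(\D)}$, which is equivalent to your Cauchy--Schwarz alternative; your exact orthogonality computation of the cross term is a slightly sharper but unnecessary refinement.
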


\begin{proof}
Fix a unit vector  $a \in \C^2$.
Let $N >2$ be a positive integer and $\delta > 0$ both to be chosen below and set
\[
 q_N(\zeta)=\delta \, \zeta^N a.
\]
It follows that (1) holds as long as $\delta \leq \beta$, no matter what $N$ is.  Moreover, we have
\[
 \|q_N\|_{C^1(\overline D_r)}
 \leq \delta r^N+\delta N r^{N-1} \leq \delta \, (N+1) \, r^{N-1} \, .
 \]
 Thus, for $r$ fixed, we get (2) as long as $N$ is sufficiently large.
Set $s_N=1-N^{-1}$ and compute
\[
 \|q_N'\|_{L^2(D_{s_N})}^2
 =\pi\delta^2N s_N^{2N} \, .
\]
In particular, $\|q_N'\|_{L^2(D_{s_N})}\to\infty$, because
$s_N^{2N}\to e^{-2}$. 
 The reverse triangle inequality gives
\[
 \bigl\|(f+q_N)'\bigr\|_{L^2(D_{s_N})}
 \geq
 \|q_N'\|_{L^2(D_{s_N})}-\|f'\|_{L^2(\D)}.
\]
The right-hand side tends to infinity as $N$ does.  This gives (3) since 
  a holomorphic map has area density equal to the
squared norm of its complex derivative.
\end{proof}

Notice that the image of the map remains in the fixed ball
\[
 B_{\sup_{\overline\D}|f|+\delta}(0),
\]
whereas its area tends to infinity with $N$.  This separation between
$C^0$ size and area is the essential point.

\subsection{Rapid growth in a Lagrangian cone}

We use the boundary-lifting construction in
\cite[Propositions~2.1 and~2.4 and Theorem~1.2]{FG} to produce surfaces with rapid growth.

\begin{proof}[Proof of Theorem \ref{t:rapid}]
It suffices to show this in the case of a Lagrangian two-plane and then use the linear map from Lemma \ref{l:transport}
to transport the statement to a general two-plane. 

 We describe how to  modify the proof of Theorem \ref{t:main} using
\cite[Theorem~1.2]{FG}.    We begin with the inductive step in the construction.  Define a sequence $\epsilon_k = \Lambda^{-k} \, \bar{\epsilon}$ where $\Lambda > 2$ will be chosen large below.  We will construct an  increasing sequence of radii $$r_1 < r_2 < \dots \to 1$$ and polynomial maps $f_k$  to $\C^2$ so that
  \begin{enumerate}
  \item $\rho \circ f_k > (1+a)^k$ for $|z| \in [r_k , 1]$.
  \item  $ \rho \circ f_k >   \rho \circ f_{k-1}- \epsilon_{k-1}$ for $|z| \leq 1$
\item $|f_k - f_{k-1}|_{C^1} < \epsilon_{k-1}$ for $|z| \leq r_{k-1}$.

  \item $|f_k| \leq b^k$ for $|z| \leq 1$.
  \end{enumerate}
  To explain this, we need to review the iterative construction in \cite{FG} that constructs $f_k$ and $r_k$ once we have $f_{k-1}$ and $r_{k-1}$.  Lemma $2.5$ in \cite{FG} gives $c>0$ so that for each $\e^{i\, \phi}$ the intersection of the Euclidean ball of radius $c\, \sqrt{\rho\circ f_{k-1} (\e^{i\phi})}$ with the Levi hypersurface through $f_{k-1} (\e^{i\phi})$ is 
  conformal to the disk.  This gives a continuous family of holomorphic maps $\tilde{\lambda} (\e^{i\phi}, w) \to \C^2$ with
  $\tilde{\lambda} (\e^{i\phi}, 0) = f_{k-1} (\e^{i\phi})$ so that  $\tilde{\lambda} (\e^{i\phi}, \cdot)$ maps the disk to the Levi hypersurface and its boundary to the boundary.  This gives (4). Moreover, 
it also implies that  $\rho$ goes strictly up by a factor on the boundary.  This is where (1) and (2) will come from.   Define 
  $$
  	 \lambda (\e^{i\phi}, w) =  \tilde{\lambda} (\e^{i\phi}, w) - f_{k-1} (\e^{i\phi}) \, , 
  $$
  to get a continuous family of holomorphic maps that sends $0$ to $0$.   The boundary lifting argument in Proposition $2.1$ in \cite{FG}  then chooses holomorphic polynomials $A_j$ to get an approximation
  $$
  	 \lambda (\e^{i\phi}, w) \approx  \bar{\lambda} (\e^{i\phi}, w) \equiv \frac{w}{\e^{M\,i\phi}} \, \sum_{j=1}^N A_j (\e^{i\phi}) \, w^{j-1} \, , 
  $$
  where the series starts at $j=1$ because $0$ is sent to $0$.  The map $f_k$ is given by choosing a large positive integer $\ell$ and setting
  $$
  	f_k (w) = f_{k-1} (w) +  \bar{\lambda} (w, w^{\ell} ) = f_{k-1}(w) + w^{\ell - M} \, \sum_{j=1}^N A_j (w) \, w^{(j-1)\, \ell}  \, .
  $$
  Taking $\ell$ large makes the first exponent $\ell -M$ positive and large, which kills off $f_k - f_{k-1}$ when $|w|$ is strictly  less than one (e.g., when $|w| \leq r_{k-1}$).   This gives the $C^0$ estimate in (3).  The $C^1$ estimate follows by differentiating the series; this introduces a factor of $\ell$, but it still goes to zero for $|w| \leq r_{k-1}$ as $\ell \to \infty$ because the powers kill off the linear factor.  Finally, we have already observed that (1) holds at the boundary and $f_k$ is a polynomial, so there is some radius $r_k < 1$ so that it holds in the annular region as claimed. 
  
The bound (2) is given by (ii) in Proposition $2.4$ in \cite{FG} (cf. (ii) in Proposition $2.1$ in \cite{FG} and $(b)$ in Theorem $3.1$ in \cite{FG}).   Properness follows since (1), (2) and (3) force $\rho \circ f_k$ to go to infinity as $z \to 1$ and $k$ increases.  
    
 It remains to explain how to get the area lower bound.  At the $k$-th stage
  we alter the map given by \cite{FG} using Lemma~\ref{l:packet} with
  $r=r_{k-1}$, with $\beta$ so small that (1), (2) and (4) still hold, and with
  area $M\ge\bigl(\Phi(b^k+1)^{1/2}+2\sqrt\pi\bigr)^2$ on $\D_s$ for some
  $s\in[r_k,1)$; we make this $s$ the new $r_k$.  By (3), $|F-f_k|\le2\epsilon_k$
  and $|F'-f_k'|\le2\epsilon_k$ on $\D_{r_k}$.  Hence $F(\D_{r_k})\subset B_{R_k}$
  with $R_k=b^k+1$, and the reverse triangle inequality in $L^2$ gives
  $\Area_F(\D_{r_k})^{1/2}\ge M^{1/2}-2\sqrt\pi\,\epsilon_k\ge\Phi(R_k)^{1/2}$, so
  $A_F(R_k)\ge\Phi(R_k)$.

Perturbing the examples as in the proof of Theorem \ref{t:main} gives an immersion.  Finally, the case for  general 
$\theta$ follows as before by transporting the construction over a Lagrangian plane via a linear map.   
\end{proof}

\section{The algebraic threshold}\label{s:algebraicity}

In this section, we will prove the doubling theorem, Theorem \ref{t:doubling}, and algebraicity    as a consequence. 

Throughout this section,  $\Pin$ is a two-plane in $(\R^4,J)$ with K\"ahler angle $\theta$ and $\Sigma$ is a connected open
Riemann surface.

\subsection{Doubling}

We prove a local doubling bound for complex maps whose image locally lies in a cone with aperture 
below the   threshold $\cot\frac\theta2$.    The key will be  that these cones
 miss a  particular complex line. 
 This allows us to produce
  a complex-linear function that is proper of finite degree on any curve contained in the cone.  
 In the global case, where the entire image is in the cone, this
   forces the curve to be algebraic.
  
 \begin{figure}[ht]
\centering
\begin{tikzpicture}[
   x=1.15cm,
   y=1.15cm,
   scale=.7,
   line cap=round,
   line join=round
]
   \fill[blue!9]
     (0,0)--(4.2,1.15)--(4.2,-1.15)--cycle;
   \fill[blue!9]
     (0,0)--(-4.2,1.15)--(-4.2,-1.15)--cycle;

   \draw[thick,blue!65!black]
     (-4.2,-1.15)--(4.2,1.15);
   \draw[thick,blue!65!black]
     (-4.2,1.15)--(4.2,-1.15);

   \draw[very thick]
     (-4.35,0)--(4.35,0)
     node[right] {$\Pin$};

   \draw[thick,densely dashed]
     (-4.1,-0.43)--(4.1,0.43)
     node[right] {$L_-$};

   \draw[thick,red!70!black]
     (0.4,-2.65)--(-0.4,2.65)
     node[above right] {$L_+$};

   \fill (0,0) circle (1.3pt);

   \node[blue!65!black,above, right]
     at (3.05,-0.77) {$C_\varepsilon(\Pin)$};
\end{tikzpicture}
\caption{In the
algebraic regime
$\tan(\theta/2)\leq\varepsilon<\cot(\theta/2)$, the cone $C_\varepsilon(\Pin)$ contains
$L_-$,
but intersects $L_+$ only at $0$.  Thus in complex coordinates $(z_+,z_-)\in L_+\oplus L_-=\C^2$,
 the second coordinate $z_-$ is proper on any proper holomorphic curve
in $C_\varepsilon(\Pin)$. This is the key for showing 
quadratic area growth.}
\label{f:quadratic-regime}
\end{figure}

Corollary
\ref{c:lines}(ii) and (iii) gives orthogonal complex lines $L_+$, $L_-$ with  $L_-\oplus L_+=\C^2$, where
$L_{\pm}$ has an orthonormal frame  $\{ v_{\pm} , J \, v_{\pm} \}$ (see Figure \ref{f:quadratic-regime}).
 Each vector  $z\in \C^2$, has a unique orthogonal decomposition
$$
	z= x_- \, v_- + y_- \, J \, v_- + x_+ \, v_+ + y_+ \, J \, v_+ \, , 
$$
where $x_- \, v_- + y_- \, J \, v_-$ is the projection to $L_-$ and $x_+ \, v_+ + y_+ \, J \, v_+$ is the projection to $L_+$.
Using this, define complex coordinates $z_- = x_- + i \, y_-$ and $z_+ = x_+ + i \, y_+$. Then $z_-$ and $z_+$
 are linear holomorphic maps from $\C^2$ to $\C$.

\begin{lemma}[Properties of $z_-$]   \label{l:zminus}
 If
$\tan \frac{\theta}{2} \leq \varepsilon < \varepsilon' < \cot \frac{\theta}{2}$, then
\begin{enumerate}
\item	There exists $c = c(\varepsilon' , \theta) > 0$ so that   
\begin{equation}\label{e:lam1}
	 c\,|x| \le |z_-(x)| \qquad\text{for all }x\in C_{\varepsilon'}(\Pin).
\end{equation}
\item Given $K$, there exists $R_0 = R_0 (\varepsilon , \varepsilon' , K)$   so that
  $S_{\varepsilon,K}(\Pi)\setminus B_{R_0}\subset C_{\varepsilon'}(\Pi)$.
\end{enumerate}
\end{lemma}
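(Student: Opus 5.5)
For part (1) I would argue by compactness on the unit sphere. Both sides of \eqref{e:lam1} are homogeneous of degree one in $x$, so it suffices to take $|x|=1$. The set $C_{\varepsilon'}(\Pin)\cap\partial B_1$ is compact, and $x\mapsto|z_-(x)|$ is continuous, where $|z_-(x)|$ is the norm of the orthogonal projection of $x$ to $L_-$. So it is enough to show that $z_-$ does not vanish on this set. If $z_-(x)=0$, then $x\in L_+$. Since $\varepsilon'<\cot\frac\theta2$, Corollary~\ref{c:lines}(iii), applied with $\varepsilon'$ in place of $\varepsilon$, gives $L_+\cap C_{\varepsilon'}(\Pin)=\{0\}$. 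This is impossible for a unit vector. Hence the minimum $c$ of $|z_-|$ on the compact set is positive, and it depends only on $\varepsilon'$ and $\Pin$ (up to rotation, only on $\theta$).

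If one wants an explicit constant, Lemma~\ref{l:mu}(i) can be used instead. Write $x=x_-+x_+$ with $x_\pm\in L_\pm$. For $x$ in the cone, $|P^\perp x|^2\le\frac{\varepsilon'^2}{1+\varepsilon'^2}|x|^2$. On the other hand,
\[
|P^\perp x|\ge|P^\perp x_+|-|P^\perp x_-|=\cos\tfrac\theta2|x_+|-\sin\tfrac\theta2|x_-| .
\]
Since $\frac{\varepsilon'}{\sqrt{1+\varepsilon'^2}}<\cos\frac\theta2$, this bounds $|x_+|$ by a multiple of $|x_-|$ strictly smaller than what $|x|$ would require. Combined with $|x|^2=|x_-|^2+|x_+|^2$, it yields a lower bound $|x_-|\ge c|x|$. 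The compactness argument is shorter, so I would present that one.

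For part (2), take $x\in S_{\varepsilon,K}(\Pin)$, so $|P^\perp x|\le\varepsilon|Px|+K$. We want $|P^\perp x|\le\varepsilon'|Px|$, and this holds as soon as $(\varepsilon'-\varepsilon)|Px|\ge K$. So the task is to show that $|Px|$ is large when $|x|$ is large. We have
\[
|x|\le|Px|+|P^\perp x|\le(1+\varepsilon)|Px|+K,
\]
so $|Px|\ge(|x|-K)/(1+\varepsilon)$. Choosing
\[
R_0=K+\frac{(1+\varepsilon)K}{\varepsilon'-\varepsilon}
\]
then suffices.

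Neither step is hard. The only real input is Corollary~\ref{c:lines}(iii), and the lower threshold $\tan\frac\theta2\le\varepsilon$ is not actually needed for either statement.
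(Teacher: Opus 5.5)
Your proof is correct and follows essentially the same route as the paper: part (1) uses compactness of $C_{\varepsilon'}(\Pin)\cap\partial B_1$, its disjointness from $L_+$ (Corollary~\ref{c:lines}(iii)) and homogeneity, and part (2) shows that $|Px|$ is large when $|x|$ is large, so that $K$ can be absorbed into $(\varepsilon'-\varepsilon)|Px|$. Your explicit $R_0$ obtained from the triangle inequality is a slightly cleaner version of the paper's squared estimate, and you are right that the hypothesis $\tan\frac\theta2\le\varepsilon$ is not used.
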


\begin{proof}
We have that  $L_+\cap C_{\varepsilon'}(\Pin)=\{0\}$ and $L_-\subset C_{\varepsilon}(\Pin)$.
 Let $\partial B_1$ be the unit sphere in $\R^4=\C^2$.  Since
$z_-$ vanishes precisely on $L_+$, the sets
$L_+\cap \partial B_1$ and $C_{\varepsilon'}(\Pin)\cap \partial B_1$ are disjoint compact sets, so $|z_-|\ge c>0$
on $C_{\varepsilon'}(\Pin)\cap \partial B_1$ for some constant $c>0$. 
Combining this with  homogeneity gives the first claim.

  To show the second claim,   suppose that 
$|P^\perp x|^2\le (\varepsilon|Px|+K)^2\le 2\,\varepsilon^2\,|Px|^2+2\,K^2$ and $|x|\geq R_0$.  It follows that $$R_0^2\le |x|^2=|Px|^2+|P^{\perp}x|^2\le (1+2\,\varepsilon^2)\,|Px|^2+2\,K^2$$
 and therefore $1\leq \frac{1+2\,\varepsilon^2}{R_0^2-2\,K^2}\,|Px|^2$.     Since $\varepsilon'>\varepsilon$,
 it follows easily that if $R_0$ is sufficiently large, then $|P^{\perp}x|\le \varepsilon'\,|Px|$ as claimed.  
\end{proof}

The proof of quadratic area growth will use the next lemma:

\begin{lemma}\label{l:twofcts}
Suppose that $\gamma$ and $\eta$ are functions on a manifold $M$ with   
$|\gamma|\leq C(1+\eta)$ for a constant $C>0$, where $\gamma$ is harmonic, $\eta\geq 0$ is  locally Lipschitz,
and $\{ \eta < 2\, R\}$ is compact and disjoint from $\partial M$ for some $R > 1$, then 
$$
\int_{\{\eta<R\}}|\nabla \gamma|^2
\leq 36\,C^2\int_{\{\eta<2\,R\}}|\nabla \eta|^2\,  .
$$
\end{lemma}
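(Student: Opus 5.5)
The plan is a Caccioppoli-type argument: test the harmonicity of $\gamma$ against $\gamma\,\phi^2$, where $\phi$ is a cutoff built from $\eta$. Then use the pointwise bound $|\gamma|\le C(1+\eta)$ to control $\gamma$ on the support of $\nabla\phi$.

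Concretely, I would set $\phi=\psi\circ\eta$ with $\psi(t)=1$ for $t\le R$, $\psi(t)=(2R-t)/R$ for $R\le t\le 2R$, and $\psi(t)=0$ for $t\ge 2R$. This $\phi$ is locally Lipschitz, equals $1$ on $\{\eta<R\}$, and vanishes outside $\{\eta<2R\}$. Since $\{\eta<2R\}$ is compact and disjoint from $\partial M$, $\phi$ has compact support in the interior, so $\gamma\phi^2$ is an admissible Lipschitz test function. Harmonicity gives
\[
0=\int \langle\nabla\gamma,\nabla(\gamma\phi^2)\rangle=\int|\nabla\gamma|^2\phi^2+2\int\gamma\phi\langle\nabla\gamma,\nabla\phi\rangle .
\]
Applying Cauchy--Schwarz and absorbing yields the standard estimate
\[
\int|\nabla\gamma|^2\phi^2\le 4\int\gamma^2|\nabla\phi|^2 .
\]

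Next I would bound the right side. On the support of $\nabla\phi$ we have $R\le\eta\le 2R$, where $|\nabla\phi|=|\nabla\eta|/R$. There $|\gamma|\le C(1+\eta)\le C(1+2R)\le 3CR$, using $R>1$. Hence
\[
\gamma^2|\nabla\phi|^2\le 9C^2|\nabla\eta|^2 .
\]
Combining the two estimates gives
\[
\int_{\{\eta<R\}}|\nabla\gamma|^2\le\int|\nabla\gamma|^2\phi^2\le 36\,C^2\int_{\{R\le\eta<2R\}}|\nabla\eta|^2\le 36\,C^2\int_{\{\eta<2R\}}|\nabla\eta|^2 ,
\]
which is exactly the claimed constant.

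The only delicate point is justifying the integration by parts with Lipschitz $\phi$ and with $\gamma$ defined only on $M$. This is routine because $\gamma$ is smooth, $\phi$ is Lipschitz with compact support away from $\partial M$, and Rademacher's theorem applies. The level set $\{\eta=2R\}$ causes no trouble since $\phi$ vanishes there.
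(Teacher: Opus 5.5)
Your proof is correct and follows the paper's argument step for step. The paper uses the same Lipschitz cutoff $\chi(\eta/R)$ with $|\chi'|\le 1$, the same Caccioppoli inequality with constant $4$, and the same bound $|\gamma|\le C(2R+1)\le 3CR$ on $\{R\le\eta\le 2R\}$; your justification of the integration by parts for a compactly supported Lipschitz test function is a detail the paper leaves implicit.
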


\begin{proof} 
Choose a Lipschitz cutoff
$$
\phi_R=\chi(\eta/R),
$$
where \(\chi=1\) on \([0,1]\), vanishes on \([2,\infty)\), and \(|\chi'|\leq1\).  Since 
$\{ \eta < 2\, R\}$ is compact and disjoint from $\partial M$, $\phi_R$ has compact support on $M$. 
The chain rule gives
$$
|\nabla\phi_R|\leq \frac{1}{R}\,|\nabla \eta|.
$$
Since \(\gamma\) is harmonic and $\phi_R$ has compact support, the Caccioppoli inequality gives
$$
\int_{\{\eta<R\}}|\nabla \gamma|^2\leq \int_{\{\eta<2\,R\}}|\nabla \gamma|^2\,\phi^2_R
\leq 4\int_{\{R<\eta<2\,R\}} \gamma^2|\nabla\phi_R|^2\le \frac{4}{R^2}\int_{\{R<\eta<2\,R\}} \gamma^2\,|\nabla \eta|^2\,  .
$$
On \(1<R\leq \eta\leq 2\,R\), we have that \(|\gamma|\leq C\,(2\,R+1)\le 3\,C\,R\) and hence the claim follows.  
\end{proof}
\vskip1mm
We now turn to the proof of the doubling theorem that guarantees quadratic area growth below the algebraic threshold 
 $\cot \frac{\theta}{2}$.

\begin{proof}[Proof of Theorem \ref{t:doubling}]
If $\varepsilon < \tan \frac{\theta}{2}$, then $C_{\varepsilon}(\Pin) \subset C_{ \tan \frac{\theta}{2}}(\Pin)$,
so we can assume that  
$\tan \frac{\theta}{2} \leq \varepsilon$.  Choose $\varepsilon'$ so that 
$\tan \frac{\theta}{2} \leq \varepsilon < \varepsilon' < \cot \frac{\theta}{2}$. Lemma \ref{l:zminus} gives $c>0$ and $R_0$ so that
\begin{align}\label{e:lam1a}
	 c\,|z| &\le |z_-| \quad\text{ for all } z\in  B_{\Omega \, R} \cap F(\Sigma) \setminus B_{R_0}
	 \, .
 \end{align}
In the rest of this proof, we work on $F^{-1} (B_{\Omega \, R})$. 
 Set $h = z_- \circ F$ and observe that $\log |h|$ is harmonic since $\log |z|$ is harmonic on $\C$ and $h$ is a holomorphic map. 
As long as  $r$ satisfies $R_0 \leq r < \frac{ \Omega \, R}{\sqrt{ 1+ c^{-2}}}$, (\ref{e:lam1a})  gives that the image of $\{ |h| = r \}$ is outside of $B_{R_0}$ and strictly inside of $B_{\Omega \, R}$.  Thus, if we take $R_1 \geq R_0$ and $\Omega > 2\, \sqrt{ 1+ c^{-2}}$, then for every
$r \in [R_1 , 2R]$, 
the set where $R_1 \leq |h| \leq r$ is compact and has $|h|$ constant along the boundary. In fact, $|h|$ is $R_1$ on the inner boundary and $r$ on the outer boundary.
 Thus, given any $r \in [ R_1 , 2R]$, the divergence theorem and the harmonicity of $\log |h|$ give that 
 \begin{align}		\label{e:e36}
 	r^{-1} \, \int_{ |h| = r} |\nabla |h|| =  \int_{ |h| = r} |\nabla \log  |h|| = \int_{|h| = R_1} |\nabla \log |h|| \equiv I_1 \, .
\end{align}
It follows from this and the co-area formula that
\begin{align}	\label{e:CAF}
	\int_{ R_1 \le  |h| \le R} |\nabla |h||^2 = \int_{R_1}^R \,  \left( \int_{ |h| = r} |\nabla |h|| \right) \, dr = I_1 \, 
	\int_{R_1}^R r \, dr = \frac{I_1}{2} \, \left( R^2 - R_1^2 \right) \, .
\end{align}
We apply this first with $R=2\, R_1$ to bound $I_1$ in terms of area
\begin{align}	\label{e:littleom}
	\frac{3}{2} \, R_1^2 \, I_1 = \int_{R_1 \leq |h| \leq 2 \, R_1} |\nabla |h||^2 \leq \Area_F (R_1 \leq |h| \leq 2\, R_1)
	\leq A_F ( 2\, R_1/c) \, .
\end{align}
Set $\omega = \frac{2}{c}$.  Next, we apply (\ref{e:CAF}) with up to radius $2\, R$ to get that
\begin{align}	\label{e:CAF2}
	\int_{ R_1 \le  |h| \le 2\, R} |\nabla |h||^2 \leq 2\, I_1 \, R^2 \le   \frac{4}{3} \, \left( \frac{R}{R_1} \right)^2 \, A_F (\omega \, R_1) \, , 
\end{align}
where the last inequality is (\ref{e:littleom}).

For every coordinate function $x_i$, we have that  $x_i \circ F$ is harmonic on $\Sigma$ and is bounded by a multiple of $h$ plus a constant 
(the constant is needed when the image lands inside of $B_{R_0}$).   Thus, we can apply Lemma \ref{l:twofcts} to each coordinate function to get
\begin{align}
	\int_{  |h| < R} |\nabla x_i \circ F|^2 \leq  C' \, \int_{ |h| < 2\, R}  |\nabla |h||^2 \leq C \, R^2 \, A_F (\omega \, R_1) \, ,
\end{align}
where the last inequality used (\ref{e:CAF2}) to bound the area in the annulus $\{ R_1 < |h| < 2\, R\}$
and the trivial area bound in $\{ |h| < R_1 \}$. 
Finally, summing this over $i$ gives a bound for the energy of $F$ and thus the area
\begin{align}
	\Area_F (|h| < R) \leq C \, R^2 \, A_F (\omega \, R_1) \, .
\end{align}
This gives the claim since $B_R \subset \{ |z_-| < R \}$. 
\end{proof}

\vskip2mm

\begin{proof}[Proof of (ii) in Theorem \ref{t:angleversion} ]
Existence follows from (ii) in Corollary \ref{c:lines}. 
As in the proof of Theorem \ref{t:doubling}, set $h = z_- \circ F$  so that $\log |h|$ is a proper harmonic function outside of a compact set that goes to infinity at infinity.  The existence of such a function is the Khas'minskii criterion for parabolicity, \cite{Kam}.
Theorem \ref{t:doubling} gives quadratic area growth.   Since $F$ is proper, the proper mapping theorem
(Theorem 8.8 in \cite{Dem})
 implies that
$V=F(\Sigma)$ is a closed one-dimensional analytic subset of $\C^2$. Stoll's algebraicity theorem \cite{Sdim1,S} therefore implies that $V$
is an affine algebraic curve.  
\end{proof}

\vskip1mm
There is also a useful consequence for stable minimal surfaces.  Micallef proved that
every complete oriented parabolic stable minimal surface in $\R^4$ is holomorphic for some
parallel orthogonal complex structure \cite{Mi}; cf. \cite{AS1,AS2}.  Combining his theorem with part (ii) in Theorem
\ref{t:angleversion} gives:

\begin{corollary}\label{c:micallef}
Let $\Sigma\subset\R^4$ be a complete oriented properly immersed stable minimal surface without
boundary, of parabolic conformal type.  Suppose that it is contained in
$S_{\varepsilon,K}(\Pin)$ for some two-plane $\Pin$, $K<\infty$, and $\varepsilon<1$.
Then there is a parallel orthogonal complex structure $\overline J$ on $\R^4$ for which the image
of $\Sigma$ is an affine algebraic curve and $A(R)\leq CR^2$.  Moreover, writing
$\theta_{\overline J}(\Pin)$ for the K\"ahler angle of $\Pin$, we have
\[
 \theta_{\overline J}(\Pin)\leq2\arctan\varepsilon<\frac\pi2.
\]
\end{corollary}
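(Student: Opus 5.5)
Micallef's theorem \cite{Mi} gives us a starting point. Since $\Sigma$ is complete, oriented, stable, minimal and parabolic in $\R^4$, there is a parallel orthogonal complex structure $\overline J$ on $\R^4$ for which $\Sigma$ is $\overline J$-holomorphic. The plan is to feed this into part (ii) of Theorem~\ref{t:angleversion}, or more precisely into the doubling Theorem~\ref{t:doubling} followed by Stoll's theorem, with all K\"ahler angles measured with respect to $\overline J$. Write $\theta=\theta_{\overline J}(\Pin)$.

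The first step is to locate $\varepsilon$ relative to the two thresholds. The immersion is proper, nonconstant and holomorphic, and its image lies in $S_{\varepsilon,K}(\Pin)$. Corollary~\ref{p:nonexist}, applied with $\overline J$ in place of $J$, then gives $\varepsilon\ge\tan(\theta/2)$, which is the claimed bound $\theta\le2\arctan\varepsilon$. Since $\varepsilon<1$, we get $\theta<\pi/2$. Hence $\tan(\theta/2)<1<\cot(\theta/2)$, so $\varepsilon<\cot(\theta/2)$ and we are strictly inside the algebraic regime $\tan(\theta/2)\le\varepsilon<\cot(\theta/2)$.

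Next, part (ii) of Theorem~\ref{t:angleversion} applies directly. The domain is a Riemann surface (with the conformal structure induced by the metric, which is compatible with $\overline J$), the map is proper and holomorphic, and it has no boundary. Concretely, Theorem~\ref{t:doubling} holds for every $R$, because properness makes each $F^{-1}(B_{\Omega R})$ compact. It yields $A(R)\le C R^2 A(\omega R_1)=C'R^2$. The proper mapping theorem then shows the image is a closed one-dimensional analytic set, and Stoll's theorem shows it is affine algebraic.

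The only point needing care is that Micallef's theorem supplies the complex structure and not merely the minimal surface. We must make sure the orientation-compatible $\overline J$ makes the immersion holomorphic as a map from the Riemann surface $\Sigma$. Once that holds, the earlier results in the paper, which were stated for an arbitrary fixed orthogonal complex structure, transfer verbatim. I expect no real obstacle beyond this bookkeeping.
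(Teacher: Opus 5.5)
Your proposal is correct and follows the paper's proof. Micallef's theorem supplies $\overline J$, Corollary~\ref{p:nonexist} with $\overline J$ in place of $J$ gives $\theta_{\overline J}(\Pin)\le 2\arctan\varepsilon<\pi/2$, and part (ii) of Theorem~\ref{t:angleversion} (doubling plus Stoll) gives algebraicity and quadratic area growth. Your explicit check that $\varepsilon<1<\cot(\theta/2)$, which places the curve in the algebraic regime, is a step the paper leaves implicit.
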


In particular, $\Pin$ is not Lagrangian for $\overline J$.  Hence a stable oriented counterexample
to the narrow-cone question in $\R^4$ must be nonparabolic; Lemma
\ref{l:nonparabolic-growth} then gives $A(R)/(R^2\log R)\to\infty$.

\begin{proof}
By Micallef's theorem \cite{Mi} a complete oriented parabolic stable minimal immersion into $\R^4$
is holomorphic with respect to some parallel (i.e.\ constant) orthogonal complex structure
$\overline J$ on $\R^4$.  Properness of the immersion is exactly properness of the resulting
holomorphic map.  Corollary \ref{p:nonexist}, applied with $J=\overline J$, gives
\[
 \varepsilon\geq\tan\frac{\theta_{\overline J}(\Pin)}2,
\]
and hence $\theta_{\overline J}(\Pin)\leq2\arctan\varepsilon<\pi/2$.  Part (ii) in Theorem \ref{t:angleversion}
gives the algebraicity and area conclusions.   
\end{proof}

\section{Narrow real cones and polynomial growth}\label{s:narrow}

The confined volume-doubling theorem of \cite{CM1} gives a polynomial bound in every dimension and codimension as long as
the stationary varifold $\Sigma$ is in a sufficiently narrow cone.    Moreover, \cite{CM1} gives Euclidean volume growth when the
height grows sublinearly.  We are first interested here in what happens when the cone aperture is very small.  Theorem \ref{t:realsetting}, part (ii), shows that  as the aperture decreases, the doubling improves and the growth gets as close as we want to Euclidean.  Second, we construct  elementary examples showing that polynomial growth can fail for large cone angles.

   Let $\Pin$ be an $n$-plane in $\R^{n+k}$ and
write $P$ and $P^\perp$ for the corresponding orthogonal projections. Let $\Sigma^n\subset\R^{n+k}$ be a stationary integral varifold, set
\[
 s(p)=|P(p)|, \quad \Sigma_{s_1 , s_2} = \Sigma \cap \{ s_1 < s < s_2 \} \, 
 \qquad V(R)=\operatorname{Vol}_n\bigl(\{ s  <R\} \cap \Sigma \bigr) \, .
\]
 Let  $\kappa_n>0$ be the doubling threshold given by Theorem $0.5$ in \cite{CM1}.

\begin{lemma}	\label{l:cm1D}
There exist $c=c(n) , c' = c'(n)$  so that  if $\varepsilon < \frac{\kappa_n}{4}$,  $\omega >2$
and $\Sigma_{\frac{1}{4} , 4\, \omega} \subset C_{\varepsilon}(\Pin)$, then 
\begin{align}	\label{e:cm1C}
	V(\omega) \leq c \, \omega^n \, V(1) + c' \, \varepsilon^2 \, V(\omega) \, \log \omega \, .
\end{align}
\end{lemma}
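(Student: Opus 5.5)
The plan is to use the first variation formula with a vector field adapted to the splitting $\R^{n+k}=\Pin\oplus\Pout$. The model is the proof of Euclidean growth in \cite{CM1}, where the error is controlled by the tilt-excess. Take $X(p)=\phi(s(p))\,P(p)$, with $\phi$ a radial cutoff in $s=|Pp|$. Stationarity gives
\begin{align*}
0=\int_\Sigma \operatorname{div}_\Sigma X .
\end{align*}
A direct computation gives $\operatorname{div}_\Sigma P = n - \sum_j |e_j^T|^2 = n-nE_\Pin$ (in the $n$-dimensional normalization). Here $e_j$ runs over an orthonormal basis of $\Pout$. Note also that $\nabla^\Sigma s$ differs from the unit radial direction in $\Pin$ only by terms controlled by the tilt. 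Choosing $\phi(s)=s^{-n}$ cut off near $s=1/4$ and $s=4\omega$ (i.e.\ $\phi=\psi(s)s^{-n}$), the divergence identity becomes a monotonicity-type identity. Schematically,
\begin{align*}
\frac{d}{dt}\Bigl(t^{-n}V(t)\Bigr) \gtrsim -\,C\,t^{-n-1}\int_{\Sigma\cap\{s<t\}} E_\Pin ,
\end{align*}
up to boundary contributions at the cutoff scales, which are bounded by $V(1)$ and $V(\omega)$. Integrating from $1$ to $\omega$ will give
\begin{align*}
\omega^{-n}V(\omega) \le c\,V(1) + C\int_1^{\omega} t^{-n-1}\int_{\Sigma_{1/4,t}}E_\Pin\,dt .
\end{align*}

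The main step is bounding the tilt-excess integral by $\varepsilon^2$ times volume. For this I would use the Caccioppoli-type identity from \cite{CM1}: apply first variation to $X=\eta^2 P^\perp p$. This gives $\operatorname{div}_\Sigma P^\perp p=\sum_j|e_j^T|^2 = nE_\Pin$. Absorbing the cross term yields, for the annulus at scale $t$,
\begin{align*}
\int_{\Sigma_{t/2,2t}\cap\{\eta=1\}}E_\Pin \le \frac{C}{t^2}\int_{\Sigma_{t/4,4t}}|P^\perp p|^2 \le C\,\varepsilon^2\, V(4t).
\end{align*}
The last inequality uses the cone hypothesis $|P^\perp p|\le\varepsilon s\le 4\varepsilon t$ on $\Sigma_{1/4,4\omega}$. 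The cutoff $\eta$ must be chosen in the variable $s$. Its tangential gradient is then bounded by $C/t$, since $|\nabla^\Sigma s|\le1$; this is where $\varepsilon<\kappa_n/4$ is convenient but not essential.

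Inserting this estimate dyadically into the integrated identity, each dyadic scale $t\in[2^m,2^{m+1}]\subset[1,\omega]$ contributes at most
\begin{align*}
C\varepsilon^2\, t^{-n}V(4t) .
\end{align*}
Multiplying through by $\omega^n$, the contributions are crudely bounded as follows. I would use the a priori doubling $V(4t)\le C (t/\omega)^{n}\cdot$stuff from \cite[Theorem 0.5]{CM1}, valid because $\varepsilon<\kappa_n/4$. Its role is to replace $t^{-n}V(4t)$ by $\omega^{-n}V(\omega)$ times a constant, exploiting the almost-monotonicity of $t^{-n}V(t)$ itself in a bootstrap. Summing over the roughly $\log_2\omega$ dyadic scales then produces the term $c'\varepsilon^2V(\omega)\log\omega$.

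The main obstacle is exactly this replacement of $t^{-n}V(4t)$ by $\omega^{-n}V(\omega)$. It requires knowing that the density ratio is almost nondecreasing, which is what one is proving. I would handle it by first establishing the weak almost-monotonicity $t^{-n}V(t)\le C\,\omega^{-n}V(4\omega)$ from the integrated identity with a crude (constant) bound on $E_\Pin$, since $E_\Pin\le1$. Alternatively one can use the CM1 doubling directly. Boundary terms at $4\omega$ can be absorbed by adjusting the cutoff region to stay within $\Sigma_{1/4,4\omega}$.
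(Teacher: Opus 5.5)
Your first two steps match the paper's proof. The cylindrical identity from $X=\phi(s)\,Pp$ is \cite[Lemma 2.5]{CM1}, which the paper quotes as $V(\omega)\le(1+n^2\,2^n\omega^n)V(1)+n^2\omega^n\int_{\Sigma_{1,\omega}}E/s^n$. Your Caccioppoli estimate with $X=\eta^2P^\perp p$, combined with the cone condition, is \cite[Lemma 2.18]{CM1}, giving $n\int_{\Sigma_{2^i,2^{i+1}}}E\le 32\,\varepsilon^2 V(2^{i+2})$.

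The gap is at the step you flag as the main obstacle, and it comes from a misdiagnosis. Replacing $t^{-n}V(4t)$ by a constant times $\omega^{-n}V(2\omega)$ is not circular and needs no bootstrap. It is the Euclidean monotonicity formula, which holds for every stationary varifold with no error term. Combine it with the cone condition, which gives $B_r\subset\{s<r\}$ and $\Sigma\cap\{s<r\}\subset B_{\sqrt{1+\varepsilon^2}\,r}$. Together these give $r^{-n}V(r)\le(1+\varepsilon^2)^{n/2}\rho^{-n}V(\rho)$ whenever $\rho\ge\sqrt{1+\varepsilon^2}\,r$. Monotonicity is a lower bound on growth, whereas the lemma is an upper bound, so nothing you are proving is being assumed. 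This is exactly the comparison the paper uses.

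The substitutes you propose do not work. The doubling theorem \cite[Theorem 0.5]{CM1} bounds $V(2r)$ above by $C\,V(r)$. Iterating it therefore bounds the larger-scale volume $V(\omega)$ by $V(4t)$, which is the reverse of the inequality $V(4t)\lesssim (t/\omega)^nV(\omega)$ you need. Feeding $E\le 1$ into your cylindrical identity, with $I(t)=\int_\Sigma\psi(s/t)$, gives $\frac{d}{dt}\bigl(t^{-n}I(t)\bigr)\ge -n\,t^{-n-1}I(t)$. That is equivalent to $I'\ge 0$ alone, so no Euclidean factor $(t/\omega)^n$ comes out.

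You have also misplaced where the hypothesis $\varepsilon<\kappa_n/4$ enters: it plays no role in the Caccioppoli cutoff. The cutoff for the top dyadic annulus must extend past $s=\omega$. So after summing the $\log_2\omega$ dyadic contributions you are left with a term $C\,\varepsilon^2\log\omega\,V(2\omega)$, not $V(\omega)$. The doubling theorem, and with it the hypothesis on $\varepsilon$, is used exactly once, to replace $V(2\omega)$ by $C\,V(\omega)$.
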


\begin{proof}
It will be convenient to let $E = E(x)= \frac{1}{n} \, \sum_{j=1}^{n} |P^{\perp} v_j|^2$ denote the pointwise tilt excess relative to the $n$-plane $\Pin$ where the $v_j$'s are an orthonormal basis for $T_x\Sigma$.
We can assume that $\omega=2^N$. 
We will need three results from \cite{CM1}.  First, Lemma $2.5$ in \cite{CM1} gives that
\begin{align}	\label{e:cm1A}
	V(\omega) \leq \left( 1 + n^2 \, 2^n \, \omega^n \right) \, V(1) + n^2 \, \omega^n \, \int_{\Sigma_{1,\omega}} \frac{E}{s^n} \, . 
\end{align}
The next result that we need combines Lemma $2.18$ in \cite{CM1} with the cone condition to get 
\begin{align}	\label{e:cm2B}
	n\, \int_{\Sigma_{2^i, 2^{i+1}}} E &\leq \frac{16}{ 2^{2\, i}} \, 	\int_{\Sigma_{2^{i-1}, 2^{i}}}  \left| P^{\perp} x\right|^2 +
	 \frac{4}{ 2^{2\, i+2}} \, 	\int_{\Sigma_{2^{i+1}, 2^{i+2}}}  \left| P^{\perp} x\right|^2 \notag \\
	 &\leq 16 \, \varepsilon^2 \, \left( V(2^i) + V(2^{i+2}) \right) \leq 32 \, \varepsilon^2 \,  V(2^{i+2}) \, .
\end{align}
Monotonicity gives $V(2^{i+2}) \leq V(2\omega) \, \left( \frac{2^{i+2}}{2\omega} \right)^n \, (1+\varepsilon^2)^{ \frac{n}{2}}$ for $i \leq N-1$, so (\ref{e:cm2B})
 implies 
\begin{align}	\label{e:cm2C}
	n\, \int_{\Sigma_{2^i, 2^{i+1}}} \frac{E}{s^n} &\leq  32 \, \varepsilon^2 \, 2^{-i \,n}  \,  V(2^{i+2})
	\leq 32 \, \varepsilon^2 \,   2^{-i \,n} \, V(2\omega) \, \left( \frac{2^{i+2}}{2\omega} \right)^n \,  (1+\varepsilon^2)^{ \frac{n}{2}} \notag \\
	&=
	2^{n+5} \, \varepsilon^2  \, \frac{V(2\omega)}{ \omega^{n}} \, (1+\varepsilon^2)^{ \frac{n}{2}}	 \, .
\end{align}
Summing this from $i=0$ to $N-1$ and using this in (\ref{e:cm1A})
gives that 
\begin{align}	\label{e:cm1AA}
	V(\omega) \leq \left( 1 + n^2 \, 2^n \, \omega^n \right) \, V(1) + n \, 	2^{n+5} \, \varepsilon^2 \, N \,  (1+\varepsilon^2)^{ \frac{n}{2}}  \, V(2\omega) \, . 
\end{align}
The volume doubling bound for $\frac{V(2\omega)}{V(\omega)}$ from Theorem $0.5$ in \cite{CM1} completes the proof.
\end{proof}

We will use the lemma to prove a volume doubling
 by choosing $\omega$ small enough that the last term in (\ref{e:cm1C}) can be absorbed on the right-hand side.  Iterating this doubling 
 will give
 the polynomial volume growth.

\begin{proof}[Proof of   Theorem \ref{t:realsetting}]
Part (i) is proven in Theorem $0.4$ of \cite{CM1}. Theorem $0.5$ in \cite{CM1} gives $\kappa_n > 0$ so that the growth is polynomial below $\kappa_n$.   Theorem \ref{t:catenoid-sum}  gives super-polynomial examples in every codimension when $\varepsilon\ge\varepsilon_*\approx0.66$. Thus, there exists some maximal $\delta_n > 0$ so that the growth is polynomial below $\delta_n$ and super-polynomial above it. 
It remains to prove the dependence on $\varepsilon$ in (ii); we do this next.

We will first assume that $\Sigma  \subset C_{\varepsilon}(\Pin) \cup B_{\frac{1}{4}}$.
Let $c$ and $c'$, both depending just on $n$, be given by Lemma \ref{l:cm1D}. 
Define $\Omega > 1$ and $\delta > 0$ by 
\begin{align}
	\Omega = \left(2\, c \right)^{ \frac{1}{ \delta}} {\text{ and }}
	\varepsilon^2 = \left( 2 \, c' \,   \log \Omega \right)^{-1} \, . \label{e:consts}
\end{align}
Set $\delta_n' = \min \{ \kappa_n/4 , (2\, c'\, \log 2)^{-1/2} \}$. 
If  $\varepsilon < \delta_n'$, then $\Omega > 2$ as required. 
We have that
\begin{align}	\label{e:deltaIS}
	\delta = \frac{ \log (2\, c)}{\log \Omega} = 2\, c' \, \log (2\, c)  \, \varepsilon^2 = c_n \, \varepsilon^2 \, ,
\end{align}
where the last equality defines $c_n$.
 Lemma \ref{l:cm1D}  and (\ref{e:consts}) give   that
 \begin{align}
 	V( \Omega) &\leq c \, \Omega^n \, V(1) + c' \, \varepsilon^2 \, \log \Omega \, V(\Omega) = c \, \Omega^n \, V(1) + \frac{1}{2} \, V(\Omega) \, .
 \end{align}
   Thus, we see   that 
 \begin{align}
 	V(\Omega) \leq 2\, c \, \Omega^n \, V(1) = \Omega^{n+\delta} \, V(1) \, .
\end{align}
By scaling this estimate and applying it repeatedly, we see for every $m$ that 
\begin{align}
	V(\Omega^m) \leq  \Omega^{n+\delta} \, V(\Omega^{m-1})\leq \cdots \leq \left(  \Omega^{n+\delta} \, \right)^m \, V(1) 
	= \left( \Omega^m \right)^{n+\delta} \, V(1) 
	 \, .
\end{align}
Since $V(r)$  is increasing, we get the same polynomial dependence for any $r$, not just powers of $\Omega$, but with a worse constant.
Since $\delta = c_n \, \varepsilon^2$ by  (\ref{e:deltaIS}), this gives the dependence in (ii) when $K=0$. 
The case with nonzero $K$ follows 
by rescaling so that $\Sigma \subset C_{2\varepsilon}(\Pin) \cup B_{\frac{1}{4}}$ and then replacing $c_n$ by $4\, c_n$ and
$\delta_n'$ by $\delta_n'/2$.
\end{proof}

We show that there is an aperture, and give an explicit bound for it, so that above this there are 
varifolds with arbitrary volume growth.  
The catenoid $\Sigma = \{ \cosh^2 x_3 = x_1^2 + x_2^2\}$  is a  rotationally symmetric minimal surface in $\R^3$
that 
 does not intersect the cylinder $\{x_1^2 + x_2^2 < 1\}$ (see subsection \ref{ss:cat}).  Let $\Pin=\{x_3=0\}\subset\R^3$ and let
\[
  \varepsilon_*=\max_{t\ge0}\frac{t}{\cosh t}=\frac1{\sinh t_*}\approx0.6627,
  \qquad\text{where } t_*\tanh t_*=1 .
\]
It follows that 
\begin{align}	\label{e:catconea}
	\Sigma \subset C_{\varepsilon *}(\Pin)  \, .
\end{align}
Given 
 $a>0$,  let
$\Sigma_a = \{ a\, x\, | \, x \in \Sigma\}$ be the scaled catenoid with neck radius $a$.
  It follows that $\Sigma_a \cap B_a = \emptyset$ 
and $\Sigma_a$ satisfies (\ref{e:catconea}) since cones are preserved by scaling.

\begin{theorem}[Catenoid sums]\label{t:catenoid-sum}
For every $\varepsilon\ge\varepsilon_*$ and every nondecreasing function
$\Phi:[1,\infty)\to(0,\infty)$, there is a stationary integral $2$-varifold
$V$ in $\R^3$ with $\spt\|V\|\subset C_\varepsilon(\Pin)$ and
$\|V\|(B_R)\ge\Phi(R)$ for all $R\ge4$.
\end{theorem}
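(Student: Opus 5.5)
The plan is to build $V$ as a countable sum $V=\sum_{j\ge1} m_j\,[\Sigma_{a_j}]$ of scaled catenoids with integer multiplicities $m_j\ge1$ and neck radii $a_j\to\infty$. Each $\Sigma_{a_j}$ is a smooth embedded minimal surface, so $[\Sigma_{a_j}]$ is a stationary integral $2$-varifold. By (\ref{e:catconea}) and scale invariance of cones, each one lies in $C_{\varepsilon_*}(\Pin)\subset C_\varepsilon(\Pin)$. Hence the support of any locally finite sum lies in the closed cone $C_\varepsilon(\Pin)$.

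First I will check that the sum is a legitimate stationary integral varifold. The key fact is $\Sigma_a\cap B_a=\emptyset$. If $a_j\to\infty$, then only finitely many summands meet any fixed ball $B_R$. So $\|V\|(B_R)=\sum_{a_j<R} m_j\|\Sigma_{a_j}\|(B_R)<\infty$, and $V$ is a Radon measure with integer multiplicity a.e. Stationarity is local: for a compactly supported vector field $X$, the first variation $\delta V(X)$ is a finite sum of terms $\delta[\Sigma_{a_j}](X)=0$. Hence $V$ is stationary.

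Next I choose the parameters to force the growth. Take $a_j=j+2$ for $j\ge1$, say, so that $a_j\ge3$ and $a_j\to\infty$. Set $\alpha_j:=\|\Sigma_{a_j}\|(B_{a_j+1})$. This is a positive number, since the neck circle $\{|x|=a_j,\ x_3=0\}$ lies on $\Sigma_{a_j}$, so $\Sigma_{a_j}$ meets the open ball $B_{a_j+1}$ in a nonempty open piece. Choose integers $m_j\ge \Phi(a_j+1)/\alpha_j$. Given $R\ge4$, let $j$ be the largest index with $a_j+1\le R$; one exists because $a_1+1=4$. Monotonicity of $\Phi$ and $R<a_{j+1}+1=a_j+2$ then give
\[
\|V\|(B_R)\ge m_j\,\alpha_j\ge\Phi(a_j+1),
\]
but this is only $\Phi(a_j+1)$, not $\Phi(R)$, since $R$ can be as large as $a_j+2$. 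To fix this, use $\alpha_j':=\|\Sigma_{a_j}\|(B_{a_j+1})$ with $m_j\ge\Phi(a_j+2)/\alpha_j'$ instead. Then for $R\in[a_j+1,a_j+2)$ we get $\|V\|(B_R)\ge m_j\alpha_j'\ge\Phi(a_j+2)\ge\Phi(R)$. These intervals cover $[4,\infty)$, which proves the bound for all $R\ge4$.

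The main obstacle is only bookkeeping: making sure the intervals of $R$ are covered and that local finiteness holds. Both follow from $\Sigma_a\cap B_a=\emptyset$. The statement asks for $\varepsilon\ge\varepsilon_*$, and the inclusion (\ref{e:catconea}) already covers the borderline case $\varepsilon=\varepsilon_*$, so no extra argument is needed there.
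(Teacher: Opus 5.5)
Your proof is correct and follows the paper's approach: a locally finite sum of scaled catenoids $m_j\,\Sigma_{a_j}$ with $a_j\to\infty$, each contained in $C_{\varepsilon_*}(\Pin)$ by scale invariance, with stationarity checked locally and the multiplicities taken large enough to exceed $\Phi$. Your version adds the bookkeeping the paper leaves implicit, namely the choice $m_j\ge\Phi(a_j+2)/\|\Sigma_{a_j}\|(B_{a_j+1})$ and the covering of $[4,\infty)$ by the intervals $[a_j+1,a_j+2)$. Note that your $\alpha_j$ and $\alpha_j'$ are the same quantity, so you can simply delete the first, superseded choice of $m_j$.
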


\begin{proof}
The varifold $V$ will be given by sums of scaled catenoids $m_i \, \Sigma_{R_i}$ with multiplicities $m_i$ and increasing scales 
$R_i \to \infty$.
 Choosing $m_i$ large enough gives as much mass as we want.   Every $\Sigma_{R_i}$ lies in the cone
and only finitely many of them intersect each compact set.
\end{proof}

These catenoids intersect each other, so the varifold $V$ is not smooth and embedded.  Moreover, $V$ cannot be parameterized as an immersion from a single smooth connected surface.  This contrasts with the earlier examples of rapid area growth in Theorem \ref{t:angleversion}(iii).

\subsection{A holomorphic analog of the catenoid}
Write $z_j=x_j+i\,y_j$ and let $\Pin_0=\{y_1=y_2=0\}$ be the Lagrangian plane
spanned by the $x_1$- and $x_2$-axes, so that $|Pz|^2=x_1^2+x_2^2$ and
$|P^\perp z|^2=y_1^2+y_2^2$.  Taking real and imaginary parts, the conic
$Q=\{z_1^2+z_2^2=1\}$ is given by
\[
  x_1^2+x_2^2=1+y_1^2+y_2^2 \qquad\text{and}\qquad x_1\,y_1+x_2\,y_2=0\, .
\]
The first equation says that $|P^\perp z|^2=|Pz|^2-1<|Pz|^2$ on $Q$, so
$Q\subset C_1(\Pin_0)$; since $|P^\perp z|/|Pz|\to1$ as $|z|\to\infty$ along $Q$,
no cone of smaller aperture contains $Q$, even outside a compact set.  The cone
is invariant under dilations, so each dilate $\lambda Q=\{z_1^2+z_2^2=\lambda^2\}$,
$\lambda>0$, also lies in $C_1(\Pin_0)$, and distinct dilates are disjoint.
Finally, $|z|^2=1+2|P^\perp z|^2\geq1$ on $Q$, so $Q$ does not meet the open unit
ball and touches the unit sphere exactly along the circle $Q\cap\Pin_0$; hence
$\lambda Q$ does not meet the open ball of radius $\lambda$.

The conic plays the role of the catenoid $\{r=\cosh x_3\}\subset\R^3$.  Both are
complete embedded minimal surfaces with two ends.
Both are invariant under rotations of the plane: the real rotations
$(z_1,z_2)\mapsto(z_1\cos t-z_2\sin t,\,z_1\sin t+z_2\cos t)$ preserve $\Pin_0$
and $Q$.  In both, the points closest to the origin form a neck
circle in the plane, here $Q\cap\Pin_0$.  They differ at infinity.  The catenoid
grows only logarithmically, so its blow-down is the plane with multiplicity two,
and the aperture $\max_t t/\cosh t\approx0.66$ of the smallest cone about
$\{x_3=0\}$ containing it is attained on two circles near the neck.  The
blow-down of $Q$ is the pair of complex lines $\{z_1=\pm i\,z_2\}$, which lie on
the boundary of $C_1(\Pin_0)$, so for $Q$ the aperture $1$ is approached only at
infinity.  Finally, scaled catenoids intersect one another, whereas distinct
dilates of $Q$ are disjoint.

As for the catenoid in Theorem~\ref{t:catenoid-sum}, one can look at sums of
dilates of $Q$: if $\lambda_j\to\infty$, then taking each $\lambda_jQ$ with
multiplicity $m_j$ gives a proper holomorphic curve in $C_1(\Pin_0)$, since
$\lambda_jQ$ misses $B_{\lambda_j}$, and choosing the $m_j$ large makes its area
grow as fast as we like.  Unlike scaled catenoids, distinct dilates of $Q$ are
disjoint, so the multiplicities can even be replaced by nearby distinct dilates
to give embedded examples.

\section{Stationary varifolds with growth
  \texorpdfstring{$r^{n+\frac{4}{5}\,\varepsilon^2}$}{r\textasciicircum(n+4/5 epsilon\textasciicircum 2)}}
\label{s:varifolds}

We prove (iv) in the introduction, showing that Theorem~\ref{t:realsetting}(ii) is
sharp: Euclidean growth fails in every narrow cone, even in
codimension one, and the excess exponent can be of order $\varepsilon^2$.
We work in $\R^3$ with $r=(x_1^2+x_2^2)^{1/2}$ and $\Pi=\{x_3=0\}$.
For a smooth embedded surface
$\Sigma$, possibly with boundary, $[\Sigma]$ is the multiplicity-one
varifold; $\|V\|$ is the mass of a varifold $V$.

\begin{theorem}\label{t:varifold}
If $\eta \in (0,1/10)$, then there is a
stationary integral $2$-varifold $V$ in $\R^3$ with:
\begin{enumerate}[label=\textup{(\roman*)}]
\item $\spt\|V\|\subset S_{\varepsilon,\eta}(\Pi)$ with   $\varepsilon=\tfrac{11}{10}\eta$. 
\item There exists $R_0<\infty$ such that
$\|V\|(B_R)\ge  R^{2+\frac{4}{5} \, \varepsilon^2}$ for all $R\ge R_0$.
\item $V=\sum_i m_i[\Sigma_i]$ is a locally finite sum with positive integer
multiplicities $m_i$; each $\Sigma_i$ is a flat disk, a flat annulus, or a
piece of a catenoid whose axis is the $x_3$-axis.
\item $V$ is invariant under rotation about the $x_3$-axis and reflection about $x_3 = 0$.
\end{enumerate}
\end{theorem}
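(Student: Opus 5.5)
The plan is to build $V$ as a self-similar tower of rotationally symmetric cells, one cell per annulus $\{\lambda^k\le r\le\lambda^{k+1}\}$ for a fixed ratio $\lambda=\lambda(\eta)>1$. Each cell is made of flat annuli in $\{x_3=0\}$ and of catenoidal pieces $x_3=\pm\bigl(t\pm a\,\arcosh(r/a)\bigr)$ with axis the $x_3$-axis. The pieces are glued along horizontal circles, and multiplicities are assigned so that the whole configuration is stationary. Properties (iii) and (iv) of Theorem~\ref{t:varifold} then hold by construction. We reflect every piece across $\{x_3=0\}$ and take it with the same multiplicity, so it suffices to describe the part in $\{x_3\ge 0\}$, viewed as a weighted graph in the $(r,x_3)$ half-plane.

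\textbf{Stationarity reduces to balancing at the junctions.} Each piece is smooth and minimal, so the first variation of $V$ is concentrated on the junction circles. It vanishes there exactly when the weighted outward conormals balance:
\[
\sum_i m_i\,\nu_i=0
\]
at every junction circle, where $\nu_i$ is the unit conormal of the profile curve of the $i$-th piece, pointing out of that piece. Reflection symmetry makes the vertical components cancel automatically at junctions on $\{x_3=0\}$. There, if $m_{\rm in}$ and $m_{\rm out}$ are the multiplicities of the flat pieces just inside and just outside, and two catenoidal sheets of multiplicity $p$ arrive from inside at angle $\alpha$, horizontal balance reads
\[
m_{\rm out}=m_{\rm in}+2p\cos\alpha .
\]
At junctions off the plane, horizontal and vertical balance are both genuine conditions. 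These I would satisfy with three pieces, typically two catenoid sheets together with a third sheet or a short flat horizontal annulus. The integers are handled by first solving with rational multiplicities and then scaling the entire cell by a common factor.

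\textbf{Cone condition and mass growth.} To keep $\spt\|V\|\subset S_{\varepsilon,\eta}(\Pi)$ with $\varepsilon=\frac{11}{10}\eta$, I will choose all catenoid parameters so that each sheet has slope of order $\eta$. This fixes $a\sim r/\cosh(\cdot)$ with all angles $\alpha=O(\eta)$. A sheet then rises to height at most $\eta r$, plus the bounded error allowed by $K=\eta$ near the innermost disk. The growth mechanism comes from the curvature of the catenoid. Along a catenoid, $r\cos\alpha=a$ is constant, so a sheet entering a scale at slope $\approx\eta$ and leaving it is flatter by a relative amount of order $\eta^2$. Feeding this change in $\cos\alpha$ through the balancing relation should make the effective multiplicity of the cell increase by a factor
\[
1+c\,\eta^2
\]
per cell. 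After $k$ cells the multiplicity is $(1+c\eta^2)^k$ and the mass in $B_{\lambda^k}$ is comparable to $\lambda^{2k}(1+c\eta^2)^k$. This is $R^{2+\delta}$ with $\delta=\log(1+c\eta^2)/\log\lambda$.

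\textbf{Main obstacle.} The hard part is designing one explicit balanced cell, with explicit $\lambda$ and angles, that has net multiplicity gain $1+c\eta^2$ and whose constant satisfies $\delta\ge\frac45\varepsilon^2=\frac45\cdot\frac{121}{100}\eta^2$. The naive junction just described loses multiplicity, since $\cos\alpha<1$. The gain therefore has to come from the fact that the vertical flux of each catenoid sheet is conserved while its horizontal conormal component grows as the sheet flattens, and the cell has to route that vertical flux back to the plane through a balanced junction. I would first try a diamond-shaped cell: an ascending sheet from $(\rho_1,0)$ to $(\rho_2,h)$ and a descending sheet from $(\rho_2,h)$ to $(\rho_3,0)$, joined at the top circle by a short flat horizontal annulus that absorbs the mismatch in the horizontal conormals. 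I would then compute the Taylor expansion in $\eta$ of the gain, and finally optimize $\lambda$ to reach the constant $\frac45$. Once the cell is fixed, local finiteness, the cone containment and the bound $\|V\|(B_R)\ge R^{2+\frac45\varepsilon^2}$ for $R\ge R_0$ follow by iterating the self-similar cell and using monotonicity of $\|V\|(B_R)$ between the scales $\lambda^k$.
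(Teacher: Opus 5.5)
Your growth heuristic is right, and it is the paper's mechanism. Along a catenoid the vertical flux is conserved, so each sheet reaches the next junction flatter, its horizontal pull rises from $\cos\alpha$ to $\cos\beta$, and horizontal balance then forces the number of sheets to grow by $1+c\eta^2$ per scale. (The conserved quantity is $r\sin\vartheta=a$ with $\vartheta$ the slope angle. Keeping $r\cos\alpha$ constant, as you wrote, would make the sheet steeper, not flatter.) However, you never produce a balanced configuration, and the one cell you suggest cannot be balanced. At the apex $(\rho_2,h)$ of your diamond, both catenary arcs lie below the apex nearby: $h$ is monotone along any catenary arc, one arc comes up from $(\rho_1,0)$, and the other goes down to $(\rho_3,0)$. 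So both outward conormals have strictly positive vertical component. A flat horizontal annulus contributes only a horizontal one, so the vertical part of $\sum m_i\nu_i$ cannot vanish. Every junction off the plane needs a piece that leaves it going \emph{up}. The paper does this by putting all junctions off the plane, at $P_k^\pm=2^k(1,\pm\eta)$ on the boundary rays of the cone. The arc $S_k$ continues upward from $P_k^+$ to $P_{k+1}^+$, which is harmless for the cone because junction heights scale with $r$, and $C_k$ crosses to $P_{k+1}^-$. Apart from the two initial disks, every arc between junctions is tilted, so every sheet contributes to the gain. The explicit bounds on $a_S,a_C$ in Lemma~\ref{l:frame} then give $\cos\beta-\cos\alpha\ge0.7\eta^2$ per dyadic scale (Lemma~\ref{l:angles}), and hence the exponent $\log_2(1+0.7\eta^2)>\frac45(1.1\eta)^2$. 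Your claim that an optimized $\lambda$ reaches $\frac45\varepsilon^2$ is never checked.

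Second, your integrality device cannot work for an infinite tower. In a self-similar tower the multiplicities of cell $k$ are $\mu^k$ times those of cell $0$, with $\mu=1+c\eta^2\in(1,2)$. No common scaling makes $m\mu^k$ an integer for all $k$. Rounding then destroys exact balance, because the coefficients are trigonometric values of fixed catenary angles. You need a continuously adjustable piece at every junction to absorb the rounding error. In the paper these are two bundles of $N=\lceil200\eta^{-2}\rceil$ nearly flat catenoidal ends at each junction:
\begin{itemize}
\item By Lemma~\ref{l:twoends}, with angles in $[-\eta/4,\eta/4]$ the ends realize any tangent sum within distance $1$ of the fixed vector $p$. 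The offset $p$ is needed because ends cannot produce a sum near $0$.
\item Lemma~\ref{l:endscone} keeps the ends inside the cone.
\item Lemma~\ref{l:mult} shows that the rounded recursion keeps $1\le c_k\le q\,s_k$, so multiplicities stay positive, and that $H_k\ge\mu^k$ even though the ends drain up to $2N$ of horizontal flux at each junction.
\end{itemize}
Without an explicit balanced junction that includes an upward-leaving piece, a continuous parameter per junction to absorb rounding, and a positivity argument for the rounded multiplicities, your proposal remains a heuristic rather than a proof.
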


Given $\varepsilon\in(0,1/10)$, Theorem~\ref{t:varifold} with
$\eta=\frac{10}{11}\varepsilon$ gives (iv), since
$S_{\varepsilon,\eta}(\Pi)\subset S_{\varepsilon,1}(\Pi)$.

Rotationally symmetric sets are
described by their profiles in the half-plane $\cH=\{(r,h):r>0\}$: for
$Y\subset\cH$ let $\Rot Y=\{(r\cos\phi,r\sin\phi,h):(r,h)\in Y,\ \phi\in\R\}$.

\subsection*{Idea of the construction}
The example $V$ is built from pieces of catenoids and two flat disks, each
with a positive integer multiplicity, all meeting along circles.  $V$ is
stationary when, at each circle, the unit tangent vectors of the
pieces meeting there, pointing into the pieces and counted with
multiplicity, sum to zero; this is one horizontal and one vertical
condition.  We start with two horizontal disks of large multiplicity and
attach to their boundary circles outgoing pieces of catenoids, one going up
and one going down, with multiplicities chosen to make this sum almost zero.
Because multiplicities are integers, the sum cannot be made exactly zero
this way; the small remaining error is cancelled by two bundles of nearly
flat catenoidal ends, whose angles can be adjusted continuously.

The construction then proceeds outward, from one pair of circles to the
next, twice as far out.  Along a catenoid the vertical flux
$r\sin(\text{slope angle})$ is constant, so each piece arrives at the next
circle flatter than it left.  A flatter sheet pulls harder horizontally on
the circle, so more sheets must leave the circle than arrive at it to
balance them.  The number of sheets therefore grows by a factor of at least
$\mu=1+0.7\,\eta^2$ from one circle to the next.  Since each sheet over
$\{2^k\le r\le2^{k+1}\}$ has area comparable to $4^k$, the area there is
about $(4\mu)^k=(2^k)^{2+\gamma}$ with $\gamma=\log_2\mu>\eta^2$: the area
grows like $r^{2+\gamma}$ rather than quadratically.

In more detail, see Figures~\ref{f:network} and \ref{f:two-catenoids}: $V$ is invariant under all
rotations about the $x_3$-axis and under the reflection $x_3\mapsto-x_3$, and
all pieces meet along the junction circles $\Rot\{P_k^\pm\}$,
$P_k^\pm=2^k(1,\pm\eta)$.  Between consecutive junctions we use catenoidal
arcs $S_k$ from $P_k^+$ to $P_{k+1}^+$ and $C_k$ from $P_k^+$ to
$P_{k+1}^-$, with multiplicities $s_k$ and $c_k$, and their reflections; the
ends have a fixed multiplicity $N$.  The horizontal part of the balance gives
the growth (Lemma~\ref{l:mult}); the other component keeps $s_k$ and
$c_k$ positive.

\begin{figure}[ht]
\centering
\begin{tikzpicture}[scale=.72,x=0.95cm,y=1.25cm]
  \draw[->] (0,0) -- (12.9,0) node[right] {$r$};
  \draw[->] (0,-3.0) -- (0,3.0) node[above] {$x_3$};
  \draw[dashed,gray] (0,0) -- (12.6,2.772);
  \draw[dashed,gray] (0,0) -- (12.6,-2.772);
  \draw[very thick] (0,0.33) -- (1.5,0.33);
  \draw[very thick] (0,-0.33) -- (1.5,-0.33);
  \node[left] at (0,0.33) {$D^+$};
  \node[left] at (0,-0.33) {$D^-$};
  \foreach \x in {1.5,3,6} {
    \pgfmathsetmacro{\y}{0.22*\x}
    \draw[thick,blue] (\x,\y) .. controls ({\x+\x/3},{\y+\x/6}) and ({2*\x-\x/3},{2*\y-0.05*\x}) .. ({2*\x},{2*\y});
    \draw[thick,blue] (\x,-\y) .. controls ({\x+\x/3},{-\y-\x/6}) and ({2*\x-\x/3},{-2*\y+0.05*\x}) .. ({2*\x},{-2*\y});
  }
  \foreach \x in {1.5,3,6} {
    \pgfmathsetmacro{\y}{0.22*\x}
    \draw[thick,red] (\x,\y) .. controls ({\x+0.27*\x},{\y-0.32*\x}) and ({2*\x-0.4*\x},{-2*\y+0.14*\x}) .. ({2*\x},{-2*\y});
    \draw[thick,red] (\x,-\y) .. controls ({\x+0.27*\x},{-\y+0.32*\x}) and ({2*\x-0.4*\x},{2*\y-0.14*\x}) .. ({2*\x},{2*\y});
  }
  \foreach \x in {1.5,3,6} {
    \pgfmathsetmacro{\y}{0.22*\x}
    \foreach \sg in {1,-1} {
      \draw[thick,green!50!black] (\x,\sg*\y) .. controls ({\x+1.0},{\sg*\y+0.05}) .. ({\x+2.2},{\sg*\y+0.13});
      \draw[thick,green!50!black] (\x,\sg*\y) .. controls ({\x+1.0},{\sg*\y-0.05}) .. ({\x+2.2},{\sg*\y-0.13});
    }
  }
  \foreach \x/\k in {1.5/0,3/1,6/2,12/3} {
    \pgfmathsetmacro{\y}{0.22*\x}
    \fill (\x,\y) circle (2pt); \fill (\x,-\y) circle (2pt);
    \node[above left] at (\x,\y) {$P^+_{\k}$};
    \node[below left] at (\x,-\y) {$P^-_{\k}$};
  }
  \node[blue] at (4.3,1.32) {$S_1$};
  \node[red] at (6.27,0.4) {$C_2$};
  \node[green!50!black] at (8.9,1.62) {ends};
\end{tikzpicture}
\caption{Arcs {\textcolor{blue}{$S_k$}}  join $P_k^+$ to $P^+_{k+1}$, arcs {\textcolor{red}{$C_k$}} join
$P_k^+$ to $P^-_{k+1}$, and from every junction two bundles of catenoidal
ends (green) go to infinity. 
Multiplicities are $s_k$, $c_k$, $N$ and, for the disks, $d$.}
\label{f:network}
\end{figure}

\begin{figure}[ht]
\centering
\begin{tikzpicture}[scale=1.55,
    back/.style={densely dashed,very thin},
    notV/.style={line width=0.85pt,line cap=round,dash pattern=on 0pt off 2.3pt}]
  %
  \fill[red!10] (1,0.2) arc[start angle=0,end angle=180,x radius=1,y radius=0.22]
     -- plot[domain=0.8457:1.6734,samples=40,variable=\t]
        ({-0.7249*cosh(\t)},{0.8131-0.7249*\t})
     arc[start angle=180,end angle=360,x radius=2,y radius=0.44]
     -- plot[domain=1.6734:0.8457,samples=40,variable=\t]
        ({0.7249*cosh(\t)},{0.8131-0.7249*\t}) -- cycle;
  \fill[blue!12,even odd rule] (0,0.4) ellipse[x radius=2,y radius=0.44]
                              (0,0.2) ellipse[x radius=1,y radius=0.22];
  \draw[->,gray] (0,-1.35) -- (0,2.45) node[above,black] {$x_3$};
  \draw[gray!70] (1.95,0) -- (-2.8,0) node[left,black] {$\Pi$};
  \foreach \s in {1,-1} {
    \draw[notV,blue] plot[domain=-2.9299:1.9377,samples=110,variable=\t]
      ({\s*0.2822*cosh(\t)},{-0.3469+0.2822*\t});
    \draw[notV,blue] plot[domain=2.6463:2.9299,samples=15,variable=\t]
      ({\s*0.2822*cosh(\t)},{-0.3469+0.2822*\t});
    \draw[notV,red] plot[domain=-1.9701:0.8457,samples=90,variable=\t]
      ({\s*0.7249*cosh(\t)},{0.8131-0.7249*\t});
    \draw[notV,red] plot[domain=1.6734:1.9701,samples=15,variable=\t]
      ({\s*0.7249*cosh(\t)},{0.8131-0.7249*\t});
  }
  \draw[notV,blue] (0,-0.3469) ellipse[x radius=0.2822,y radius=0.062];
  \draw[notV,red] (0,0.8131) ellipse[x radius=0.7249,y radius=0.159];
  \draw[blue,back] (2,0.4) arc[start angle=0,end angle=180,x radius=2,y radius=0.44];
  \draw[blue,thick] (2,0.4) arc[start angle=0,end angle=-180,x radius=2,y radius=0.44];
  \draw[red,back] (2,-0.4) arc[start angle=0,end angle=180,x radius=2,y radius=0.44];
  \draw[red,thick] (2,-0.4) arc[start angle=0,end angle=-180,x radius=2,y radius=0.44];
  \draw[violet,back] (1,0.2) arc[start angle=0,end angle=180,x radius=1,y radius=0.22];
  \draw[violet,very thick] (1,0.2) arc[start angle=0,end angle=-180,x radius=1,y radius=0.22];
  \foreach \s in {1,-1} {
    \draw[blue,very thick] plot[domain=1.9377:2.6463,samples=40,variable=\t]
      ({\s*0.2822*cosh(\t)},{-0.3469+0.2822*\t});
    \draw[red,very thick] plot[domain=0.8457:1.6734,samples=40,variable=\t]
      ({\s*0.7249*cosh(\t)},{0.8131-0.7249*\t});
  }
  \fill (1,0.2) circle (1.0pt);
  \fill (2,0.4) circle (0.85pt);
  \fill (2,-0.4) circle (0.85pt);
  \draw[thin] (1.0,0.17) -- (1.25,-0.42) node[below=-1pt] {\small $P_k^+$};
  \node[anchor=north west] at (2.05,0.39) {\small $P_{k+1}^+$};
  \node[anchor=south west] at (2.05,-0.39) {\small $P_{k+1}^-$};
  \draw[blue,thin] (1.6,0.34) -- (2.25,0.9) node[above right=-2pt] {$S_k$};
  \draw[red,thin] (1.7,-0.26) -- (2.3,-0.88) node[right=-1pt] {$C_k$};
  \draw[blue,thin] (-0.25,-0.37) -- (-0.95,-1.2)
     node[below left=-2pt,align=right] {\small neck of the catenoid\\[-3pt]\small containing $\Rot S_k$};
  \draw[red,thin] (0.71,0.86) -- (1.4,1.45)
     node[right,align=left] {\small neck of the catenoid\\[-3pt]\small containing $\Rot C_k$};
\end{tikzpicture}
\caption{Fitting catenoid pieces together.}
\label{f:two-catenoids}
\end{figure}

\subsection{Catenary rays}	\label{ss:cat}
Apart from the two disks, every piece of $V$ is a piece of a catenoid with
axis the $x_3$-axis, or a flat annulus. 
The standard catenoid $\{r=\cosh x_3\}$ is minimal \cite[page 9]{CMbook}, and  so
are its dilations and vertical translations $\{r=a\cosh((x_3-b)/a)\}$, with
$a>0$ and $b\in\R$.  Each is the rotation of the catenary
$\{r=a\cosh((h-b)/a)\}\subset\cH$, which has two branches meeting at the
neck $(a,b)$.  We describe each piece of $V$ by its profile, a piece of one
branch, which is determined by its starting point $(r_0,h_0)$ and its
starting slope angle $\alpha$.

\begin{definition}\label{d:catenary}
Let $r_0>0$, $h_0\in\R$, $\alpha\in(-\pi/2,\pi/2)$ and $a=r_0\sin\alpha$.  For
$\rho\ge r_0$ put $h(\rho)=h(\rho;r_0,h_0,\alpha)=h_0$ if $\alpha=0$, and
otherwise
\[
  h(\rho)=b+a\arcosh(\rho/|a|),\qquad b=h_0-a\arcosh(r_0/|a|),
\]
where $\arcosh y=\log(y+\sqrt{y^2-1})$.  We call
$\Gamma(r_0,h_0,\alpha)=\{(\rho,h(\rho)):\rho\ge r_0\}\subset\cH$ a
\emph{catenary ray}, and its part over $[r_0,r_1]$ a \emph{catenary arc}
$\Gamma(r_0,h_0,\alpha;r_1)$.
\end{definition}

Thus, for $\alpha\ne0$, $\Gamma$ is the part of the catenary
$\{r=|a|\cosh((h-b)/a)\}$ where $r\ge r_0$ and $(h-b)/a>0$, that is, of the
branch above the neck if $\alpha>0$ and below it if $\alpha<0$.

On $\R^2$ we write $u(t)=(\cos t,\sin t)$, $e_1=(1,0)$ and
$\sigma(r,h)=(r,-h)$; norms are Euclidean. The \emph{slope angle} of a graph $h=h(\rho)$ is
$\arctan h'(\rho)\in(-\pi/2,\pi/2)$. If $\Rot Y$ has boundary circle
$\Rot\{P\}$, its \emph{inward tangent} at $P$ is the unit vector in $\cH$
tangent to $Y$ at $P$ and pointing into $Y$. 

\vskip1mm
 Lemma~\ref{l:catenary} below collects
what we need about these pieces.  

\begin{lemma}\label{l:catenary}
Let $h=h(\,\cdot\,;r_0,h_0,\alpha)$, $\Gamma=\Gamma(r_0,h_0,\alpha)$ and
$a=r_0\sin\alpha$.
\begin{enumerate}[label=\textup{(\alph*)}]
\item The slope angle $\vartheta(\rho)$ of $\Gamma$ satisfies
$\rho\sin\vartheta(\rho)=r_0\sin\alpha$ for all $\rho\ge r_0$. In particular, the 
 inward tangent is
 $u(\alpha)$  at
$(r_0,h_0)$.
\item If $\alpha>0$, then $h$ is increasing and concave; if $\alpha<0$, then
$h$ is decreasing and convex.
\item For $\rho\ge r_0$ and $x=\rho/r_0$,
\[
  |\sin\alpha|\,\log x\;\le\;\frac{|h(\rho)-h_0|}{r_0}\;\le\;
  |\sin\alpha|\,\big(\log x+\sin^2\alpha\big).
\]
\item For every $r_1>r_0$, the function
$\alpha\mapsto h(r_1;r_0,h_0,\alpha)-h_0$ is continuous on $(-\pi/2,\pi/2)$,
has the sign of $\alpha$, and tends to $\pm r_0\arcosh(r_1/r_0)$ as
$\alpha\to\pm\pi/2$.
\item $h(\lambda\rho;\lambda r_0,\lambda h_0,\alpha)=\lambda\,h(\rho;r_0,h_0,\alpha)$
for $\lambda>0$, and $h(\rho;r_0,-h_0,-\alpha)=-h(\rho;r_0,h_0,\alpha)$.
\end{enumerate}
\end{lemma}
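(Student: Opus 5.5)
Every part of Lemma~\ref{l:catenary} is a direct computation with the explicit formula in Definition~\ref{d:catenary}. I will treat $\alpha=0$ separately, since then $h\equiv h_0$ and (a)--(e) are trivial. For $\alpha\ne0$ I will use $a=r_0\sin\alpha$, $|a|<r_0$, and $h(\rho)=b+a\arcosh(\rho/|a|)$.

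For (a), I differentiate to get
\[
h'(\rho)=\frac{a}{\sqrt{\rho^2-a^2}} .
\]
Hence $\tan\vartheta=a/\sqrt{\rho^2-a^2}$, which gives $\sin\vartheta=a/\rho$, that is, $\rho\sin\vartheta=r_0\sin\alpha$. At $\rho=r_0$ this gives $\sin\vartheta=\sin\alpha$, and since both angles lie in $(-\pi/2,\pi/2)$ we get $\vartheta=\alpha$. The inward direction at the starting point is the one with increasing $\rho$, so the inward tangent is $u(\alpha)$.

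For (b), the sign of $h'$ is the sign of $a$, which is the sign of $\alpha$. We also have
\[
h''=-\frac{a\rho}{(\rho^2-a^2)^{3/2}},
\]
which has the opposite sign, giving the monotonicity and convexity claims.

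For (e), scaling all of $\rho,r_0,h_0$ by $\lambda$ scales $a$ and $b$ by $\lambda$ and leaves the argument $\rho/|a|$ of $\arcosh$ unchanged, so $h$ scales by $\lambda$. Replacing $(h_0,\alpha)$ by $(-h_0,-\alpha)$ flips the signs of $a$ and $b$, so $h$ changes sign.

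For (c), write
\[
|h(\rho)-h_0|=|a|\bigl(\arcosh(\rho/|a|)-\arcosh(r_0/|a|)\bigr)=|a|\int_{r_0}^{\rho}\frac{dt}{\sqrt{t^2-a^2}} .
\]
The lower bound follows from $\sqrt{t^2-a^2}\le t$, which gives $|a|\log x=r_0|\sin\alpha|\log x$.

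The upper bound is the main obstacle. I will split
\[
\frac{1}{\sqrt{t^2-a^2}}=\frac1t+\Bigl(\frac{1}{\sqrt{t^2-a^2}}-\frac1t\Bigr).
\]
The second term is positive, and its integral over $[r_0,\infty)$ is explicit:
\[
\int_{r_0}^\infty\Bigl(\frac{1}{\sqrt{t^2-a^2}}-\frac1t\Bigr)dt=\log\frac{2r_0}{r_0+\sqrt{r_0^2-a^2}}=-\log\frac{1+\cos\alpha}{2}.
\]
To finish, I need the elementary inequality $-\log\frac{1+\cos\alpha}{2}\le\sin^2\alpha$, where the right side is $(1-\cos\alpha)(1+\cos\alpha)$. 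Setting $c=\cos\alpha\in(0,1]$, this becomes $\log\frac{2}{1+c}\le 1-c^2$. Both sides vanish at $c=1$. On $(0,1)$, the derivative of the left side is $-1/(1+c)$ and the derivative of the right side is $-2c$, so it suffices that $2c(1+c)\le1$. That fails for $c$ near $1$, so this comparison does not work.

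Instead I will use $\log(1+y)\le y$ with $y=\frac{1-c}{1+c}$. This gives
\[
\log\frac{2}{1+c}\le\frac{1-c}{1+c}\le 1-c\le(1-c)(1+c)\cdot\frac{1}{1+c}\cdot(1+c),
\]
and I need the last quantity to be at most $(1-c)(1+c)$, i.e. $1\le 1+c$, which holds. After multiplying by $|a|=r_0|\sin\alpha|$ and dividing by $r_0$, this yields the upper bound $|\sin\alpha|(\log x+\sin^2\alpha)$.

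For (d), continuity for $\alpha\ne0$ follows from the integral formula, since $|a|$ depends continuously on $\alpha$ and the integrand is continuous. At $\alpha=0$, the bound from (c) gives $|h(r_1)-h_0|\le r_0|\sin\alpha|(\log x+1)\to0$, so continuity holds there too. The sign claim is (b). As $\alpha\to\pm\pi/2$ we have $|a|\to r_0$, and the integral tends to $r_0\arcosh(r_1/r_0)$ by dominated convergence: the singularity $(t^2-a^2)^{-1/2}$ at $t=r_0$ is integrable uniformly in $a$, and at $|a|=r_0$ the integral equals $\arcosh(r_1/r_0)$. This gives the limit $\pm r_0\arcosh(r_1/r_0)$.
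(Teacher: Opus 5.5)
Your proposal is correct and follows the paper's proof essentially step by step: the same formulas for $h'$ and $h''$ give (a), (b), and (e). In (c) you reach the same intermediate bound $\log x+\log\frac{2}{1+\cos\alpha}$, obtained by integrating the positive excess $\frac{1}{\sqrt{t^2-a^2}}-\frac1t$ out to infinity where the paper uses $\rho+\sqrt{\rho^2-a^2}\le2\rho$, and you finish with the same inequality $\log(1+y)\le y$; in (d) you use (c) for continuity at $\alpha=0$, as the paper does. Two small remarks: the discarded derivative comparison in (c) has its sufficient condition reversed (it should be $2c(1+c)\ge1$, which fails near $c=0$ rather than near $c=1$), though this is harmless since you abandon it. Also, dominated convergence in (d) is valid but unnecessary, since the closed form $a\bigl(\arcosh(r_1/|a|)-\arcosh(r_0/|a|)\bigr)$ gives the limit directly from $\arcosh 1=0$.
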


\begin{proof}
Since $\frac{d}{d\rho}\arcosh(\rho/|a|)=(\rho^2-a^2)^{-1/2}$, in all cases
$h'=a(\rho^2-a^2)^{-1/2}$ and $h''=-a\rho\,(\rho^2-a^2)^{-3/2}$.  So
$h'(r_0)=\tan\alpha$ and, as $1+h'^2=\rho^2/(\rho^2-a^2)$,
$\sin\vartheta=h'/\sqrt{1+h'^2}=a/\rho$.  This gives (a) and (b).

For (c) let $\alpha\ne0$.  Integrating $|h'|\ge|a|/\rho$ gives
$|h(\rho)-h_0|\ge|a|\log x$.  Conversely,
$|h(\rho)-h_0|=|a|\log\big((\rho+\sqrt{\rho^2-a^2})/(r_0+\sqrt{r_0^2-a^2})\big)$.
Using $\rho+\sqrt{\rho^2-a^2}\le2\rho$, $r_0+\sqrt{r_0^2-a^2}=r_0(1+\cos\alpha)$
and $\log y\le y-1$,
\[
  \frac{|h(\rho)-h_0|}{|a|}\le\log x+\log\frac{2}{1+\cos\alpha}
  \le\log x+\frac{1-\cos\alpha}{1+\cos\alpha}
  =\log x+\frac{\sin^2\alpha}{(1+\cos\alpha)^2}\le\log x+\sin^2\alpha .
\]
For (d), $F(\alpha)=h(r_1;r_0,h_0,\alpha)-h_0$ equals
$a\big(\arcosh(r_1/|a|)-\arcosh(r_0/|a|)\big)$ for $\alpha\ne0$, and $F(0)=0$.
So $F$ has the sign of $\alpha$ and is continuous away from $0$; it is
continuous at $0$ because $|F(\alpha)|\le r_0|\sin\alpha|\,(\log(r_1/r_0)+1)$
by (c).  As $\alpha\to\pm\pi/2$, $a\to\pm r_0$, so
$F(\alpha)\to\pm r_0\arcosh(r_1/r_0)$ since $\arcosh1=0$.  Finally, (e)
follows from the formula for $h$: replacing $(\rho,r_0,h_0)$ by
$\lambda(\rho,r_0,h_0)$ replaces $(a,b,h)$ by $\lambda(a,b,h)$, and replacing
$(h_0,\alpha)$ by $(-h_0,-\alpha)$ replaces $(a,b,h)$ by $-(a,b,h)$.
\end{proof}

\subsection{Stationary networks of annuli}

The next lemma shows that if the sum of the inward unit conormals, counted with multiplicity, is zero along each boundary circle, then the union of the pieces is stationary.
This is the standard balancing condition; it follows from the first variation formula with boundary  (see, e.g., \cite{Al,CMbook,Si}), and is analogous to  \cite{AllardAlmgren} for one-dimensional stationary networks.

\begin{lemma}\label{l:network}
Let $\{\Sigma_i\}_{i\in I}$ be a countable family of surfaces in $\R^3$, each
of which is $\Rot\Gamma$ for a catenary arc or ray $\Gamma$, or a flat disk
$\{r\le r_i,\ x_3=h_i\}$, and let $m_i$ be positive integers. Assume that
every bounded subset of $\R^3$ meets only finitely many $\Sigma_i$. For a
boundary circle $\Rot\{P\}$ of $\Sigma_i$, let $T_i(P)$ be the inward tangent
of $\Sigma_i$ at $P$ \textup{(}it is $-e_1$ for a disk\textup{)}. If for every
such $P$
\begin{equation}\label{e:balance}
  \sum_{i:\ \Rot\{P\}\subset\partial\Sigma_i} m_i\,T_i(P)=0,
\end{equation}
then $V=\sum_i m_i[\Sigma_i]$ is a stationary integral $2$-varifold in $\R^3$.
\end{lemma}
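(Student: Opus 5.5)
The plan is to verify the first variation formula directly. Recall that $V$ is stationary exactly when $\delta V(X)=\int \operatorname{div}_{T}X\,d\|V\|=0$ for every compactly supported smooth vector field $X$ on $\R^3$. Fix such an $X$. By local finiteness, its support meets only finitely many $\Sigma_i$, so $\delta V(X)=\sum_i m_i\int_{\Sigma_i}\operatorname{div}_{\Sigma_i}X$ is a finite sum. Integrality is immediate, since $V$ is a locally finite sum of smooth embedded surfaces with positive integer multiplicities; so only stationarity needs proof.

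Each $\Sigma_i$ is a smooth minimal surface with smooth boundary: flat disks and flat annuli are trivially minimal, and rotations of catenary arcs or rays are pieces of catenoids, which are minimal by \S\ref{ss:cat}. The first variation formula with boundary for a smooth minimal surface gives
\[
\int_{\Sigma_i}\operatorname{div}_{\Sigma_i}X=-\int_{\partial\Sigma_i}\langle X,\nu_i\rangle\,ds,
\]
where $\nu_i$ is the inward unit conormal. For catenary rays the surface is noncompact, but $X$ has compact support, so only the compact part near the single boundary circle contributes. By rotational symmetry, at a point of the circle $\Rot\{P\}$ the conormal $\nu_i$ is the rotation of the inward tangent $T_i(P)\in\cH$, viewed in the meridian half-plane through that point. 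Thus $\nu_i=\Rot_\phi T_i(P)$, with $T_i(P)=-e_1$ for a disk.

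Next, group the boundary terms by circle. Distinct circles $\Rot\{P\}$ are disjoint, and all pieces sharing a circle have the same length element $ds$ on it. Therefore
\[
\delta V(X)=-\sum_{P}\int_{\Rot\{P\}}\Bigl\langle X,\ \Rot_\phi\sum_{i:\Rot\{P\}\subset\partial\Sigma_i}m_iT_i(P)\Bigr\rangle\,ds .
\]
Only finitely many $P$ contribute, and the inner sum vanishes by \eqref{e:balance}. Hence $\delta V(X)=0$.

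The only delicate point is the flat disk $\{r\le r_i\}$. It has no singularity at the axis, so its only boundary is the outer circle with conormal $-e_1$. For catenary pieces, we must check that no piece touches the axis. This holds because $r\ge r_0>0$ on every catenary ray or arc, so every piece is smooth up to its boundary and the formula above applies without extra interior terms.
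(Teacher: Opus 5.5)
Your proposal is correct and follows essentially the same argument as the paper. Both proofs take the finitely many minimal pieces meeting $\operatorname{spt}X$, truncate the unbounded ones where $X$ vanishes, and apply the divergence theorem with boundary to each piece. Both then identify each conormal with the rotated inward tangent and group the boundary integrals by circle, so that the balancing condition \eqref{e:balance} makes each group vanish.
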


\begin{proof}
By local finiteness $V$ is an integral varifold; the $\Sigma_i$ may intersect
each other, which plays no role since $V$ is defined as a sum. Let
$X\in C^1_c(\R^3,\R^3)$. Only finitely many $\Sigma_i$ meet $\spt X$, each is
minimal and smooth up to its boundary, and an unbounded $\Sigma_i$ may be
replaced by $\Sigma_i\cap\{r\le\rho_1\}$ with $\rho_1$ so large that $X$
vanishes near $\{r=\rho_1\}$. Hence
\[
  \delta V(X)=\sum_i m_i\int_{\Sigma_i}\operatorname{div}_{\Sigma_i}X
  =\sum_i m_i\int_{\partial\Sigma_i}\langle X,\nu_i\rangle\,d\mathcal H^1,
\]
where $\nu_i$ is the outward unit conormal.  The lemma follows by grouping
 the boundary integrals together for each boundary circle.
 \end{proof}

\subsection{The arcs between junctions}

For the rest of the section we fix $\eta\in(0,1/10)$ and put
$P_k^\pm=2^k(1,\pm\eta)\in\cH$ for $k\ge0$.

\begin{lemma}\label{l:frame}
There are $\alpha_S\in(0,\pi/2)$ and $\alpha_C\in(-\pi/2,0)$ such that, for
every $k\ge0$, the catenary arcs
\[
  S_k=\Gamma(2^k,2^k\eta,\alpha_S;2^{k+1}),\qquad
  C_k=\Gamma(2^k,2^k\eta,\alpha_C;2^{k+1})
\]
start at $P_k^+$ and end at $P^+_{k+1}$ and at $P^-_{k+1}$, respectively.

\noindent {\bf{Initial angles}}. The
numbers $a_S=\sin\alpha_S$ and $a_C=\sin|\alpha_C|$ satisfy
\begin{equation}\label{e:sbounds}
  1.38\,\eta\le a_S\le\frac{\eta}{\log2},\qquad
  3\,\eta\le a_C\le\frac{3\eta}{\log2} .
\end{equation}
{\bf{Final slopes}}.
The arc $S_k$ arrives at $P^+_{k+1}$ with slope angle $\beta_S$, and $C_k$
arrives at $P^-_{k+1}$ with slope angle $-\beta_C$, where
$\beta_S,\beta_C\in(0,\pi/2)$, $\sin\beta_S=a_S/2$ and $\sin\beta_C=a_C/2$.

\noindent
{\bf{Cone condition}}. For $2^k\le\rho\le2^{k+1}$ the height functions $h$ of the arcs
satisfy
\begin{equation}\label{e:arccone}
  \eta\rho\le h(\rho)\le1.1\,\eta\rho\ \text{ on }S_k,\qquad
  |h(\rho)|\le\eta\rho\ \text{ on }C_k .
\end{equation}
\end{lemma}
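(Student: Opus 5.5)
First reduce to $k=0$ with scaling. By Lemma~\ref{l:catenary}(e), $h(2^k\rho;2^k,2^k\eta,\alpha)=2^k h(\rho;1,\eta,\alpha)$. Hence it suffices to find $\alpha_S$ and $\alpha_C$ such that $\Gamma(1,\eta,\alpha_S;2)$ ends at $(2,2\eta)$ and $\Gamma(1,\eta,\alpha_C;2)$ ends at $(2,-2\eta)$. These angles then work for every $k$, and all the stated bounds are scale invariant.

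\textbf{Existence.} Set $F(\alpha)=h(2;1,\eta,\alpha)-\eta$. By Lemma~\ref{l:catenary}(d), $F$ is continuous, has the sign of $\alpha$, and tends to $\pm\arcosh 2\approx\pm1.317$ as $\alpha\to\pm\pi/2$. The targets are $F=\eta$ and $F=-3\eta$. Since $\eta<1/10$, both lie in $(-1.317,1.317)$, so the intermediate value theorem gives $\alpha_S>0$ and $\alpha_C<0$.

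\textbf{Initial angles.} Apply Lemma~\ref{l:catenary}(c) with $x=2$. For $S_0$ this gives $a_S\log2\le\eta\le a_S(\log2+a_S^2)$. The left inequality is the upper bound $a_S\le\eta/\log 2$. Feeding $a_S\le\eta/\log2<0.15$ into the right inequality gives $a_S\ge\eta/(\log 2+0.0208)\ge1.38\,\eta$. For $C_0$ the height change is $3\eta$, so $a_C\le 3\eta/\log2$. For the lower bound, $a_C\ge3\eta/(\log2+a_C^2)$ with $a_C\le 3\eta/\log 2<0.44$. This only yields about $3\eta/0.886$, which is weaker than the claimed $a_C\ge 3\eta$.

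The main obstacle is therefore the lower bound $a_C\ge3\eta$. The crude bound $\log y\le y-1$ used in (c) is too lossy here. I would instead use the exact formula
\[
\frac{|h(2)-h_0|}{a}=\log\frac{2+\sqrt{4-a^2}}{1+\sqrt{1-a^2}}.
\]
The right side is at most $\log 2+\log\frac{2}{1+\cos\alpha}\approx\log2+a^2/4$, which is comfortably below $1$ for $a<0.44$. That gives $a_C\ge 3\eta/1=3\eta$. The same refinement also confirms the $1.38\,\eta$ bound with margin.

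\textbf{Final slopes.} These follow directly from Lemma~\ref{l:catenary}(a): $2\sin\vartheta(2)=\sin\alpha$. So $\sin\beta_S=a_S/2$ and $\sin\beta_C=a_C/2$, with signs given by (b).

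\textbf{Cone condition.} For $S_0$, $h$ is increasing and concave by (b). It meets the line $\eta\rho$ at $\rho=1$ and $\rho=2$, so concavity gives $h\ge\eta\rho$ on $[1,2]$. For the upper bound, use (c) with $a_S\le\eta/\log2$:
\[
h(\rho)\le\eta+\frac{\eta}{\log2}\,(\log\rho+0.03).
\]
One then checks, by elementary calculus (the maximum of the difference is near the interior critical point $\rho=1/\log2$), that this is at most $1.1\,\eta\rho$ on $[1,2]$. If this is tight, replace the crude (c) by the exact formula.

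For $C_0$, $h$ is decreasing and convex, running from $\eta$ to $-2\eta$. Convexity places $h$ below the chord $\eta(4-3\rho)$, and $\eta(4-3\rho)\le\eta\rho$ for $\rho\ge1$, so $h\le\eta\rho$. For the lower bound $h\ge-\eta\rho$: the function $h+\eta\rho$ is convex, and its derivative at $\rho=2$ is $-\tan\beta_C+\eta$. Since $a_C\ge3\eta$, this derivative is negative, so $h+\eta\rho$ is decreasing on $[1,2]$. It therefore attains its minimum $0$ at $\rho=2$, which gives $h\ge-\eta\rho$.
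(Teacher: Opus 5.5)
Your argument is correct and is essentially the paper's proof: rescaling via Lemma~\ref{l:catenary}(e), the intermediate value theorem via (d), part (c) at $x=2$ for the initial angles, (a) for the arrival angles, and concavity/convexity for the cone bounds. Your chord argument for $h\le\eta\rho$ on $C_0$ is a harmless variant of the paper's monotonicity argument, $h\le h(1)=\eta\le\eta\rho$.

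The ``main obstacle'' you identify does not exist. The bound $a_C\ge 3\eta/(\log2+a_C^2)\ge 3\eta/0.886$ is \emph{stronger} than $a_C\ge 3\eta$, so the crude bound (c) already suffices, exactly as in the paper (which uses $a_C^2<0.2$). Your deferred calculus check for the upper cone bound on $S_0$ also closes with (c), since $\max_{x>0}\frac1x\bigl(1+\frac{\log x+0.03}{\log 2}\bigr)=\frac{2}{\log 2}e^{-0.97}\approx1.094<1.1$. So the exact formula is never needed.
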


\begin{proof}
By Lemma~\ref{l:catenary}(e) it suffices to take $k=0$. By
Lemma~\ref{l:catenary}(d) and the intermediate value theorem,
$\alpha\mapsto h(2;1,\eta,\alpha)-\eta$ takes every value in
$(-\arcosh2,\arcosh2)$, which contains $\eta$ and $-3\eta$ since
$3\eta\le0.3<\arcosh2$; and it has the sign of $\alpha$. This gives
$\alpha_S>0$ with $h(2)=2\eta$ and $\alpha_C<0$ with $h(2)=-2\eta$. By
Lemma~\ref{l:catenary}(c) with $x=2$,
\[
  a_S\log2\le\eta\le a_S(\log2+a_S^2),\qquad
  a_C\log2\le3\eta\le a_C(\log2+a_C^2).
\]
The left inequalities are the upper bounds in \eqref{e:sbounds}. They give
$a_S^2\le(0.1/\log2)^2<0.03$ and $a_C^2\le(0.3/\log2)^2<0.2$, and then
the right inequalities give $a_S\ge\eta/(\log2+0.03)\ge1.38\,\eta$ and
$a_C\ge3\eta/(\log2+0.2)\ge3\eta$. The arrival angles follow from
Lemma~\ref{l:catenary}(a) with $\rho=2r_0$.

On $S_0$, $h$ is concave by Lemma~\ref{l:catenary}(b), so it lies above its
chord, which is contained in the line $h=\eta\rho$. By
Lemma~\ref{l:catenary}(c) and \eqref{e:sbounds}, for $x=\rho\in[1,2]$,
\[
  \frac{h(\rho)}{\eta\rho}\le\frac1x\Big(1+\frac{\log x+0.03}{\log2}\Big)
  \le\frac{2}{\log2}\,e^{0.03-1}<1.1,
\]
the middle term being the maximum over $x>0$, attained where
$\log x=1-\log2-0.03$. On $C_0$, $h$ is decreasing and convex, so
$h(\rho)\le h(1)=\eta\le\eta\rho$, and $h$ lies above its tangent line at
$\rho=2$, whose slope is $-\tan\beta_C$. As
$\tan\beta_C\ge\sin\beta_C=a_C/2>\eta$, this gives
$h(\rho)\ge-2\eta+(2-\rho)\tan\beta_C\ge-2\eta+(2-\rho)\eta=-\eta\rho$.
\end{proof}

Since $\alpha_S>0>\alpha_C$,    $\Rot S_k$ lies on the half of a catenoid above its neck, and
$\Rot C_k$ on the half of another catenoid below its neck.
Figure~\ref{f:two-catenoids} shows both annuli leaving the circle
$\Rot\{P_k^+\}$.

For $y,z\in\R^2$ write $[y,z]=y_1z_2-y_2z_1$, so that
$[u(s),u(t)]=\sin(t-s)$, and let $\Delta=\sin(\alpha_S+|\alpha_C|)$, which is
positive as $0<\alpha_S+|\alpha_C|<\pi$.  Every $y\in\R^2$ can then be written
uniquely as $y=s\,u(\alpha_S)+c\,u(\alpha_C)$, and applying
$[\,\cdot\,,u(\alpha_S)]$ and $[u(\alpha_C),\,\cdot\,]$ gives
\begin{equation}\label{e:solve}
  \Delta\,c=[y,u(\alpha_S)],\qquad \Delta\,s=[u(\alpha_C),y] .
\end{equation}
We will use this with
\[
  \lambda_1=\sin(\alpha_S-\beta_S),\quad
  \lambda_2=\sin(\beta_C-\alpha_S),\quad
  \lambda_3=\sin(|\alpha_C|+\beta_S),\quad
  \lambda_4=\sin(|\alpha_C|+\beta_C).
\]

\begin{lemma}\label{l:angles}
The numbers above satisfy:
\begin{enumerate}[label=\textup{(\alph*)}]
\item $\lambda_1,\lambda_4>0$ and $0<\lambda_2<\lambda_3$.
\item $a_S\lambda_2<a_C\lambda_1$.
\item $\cos\beta_j-\cos\alpha_j\ge0.7\,\eta^2$ for $j\in\{S,C\}$.
\end{enumerate}
\end{lemma}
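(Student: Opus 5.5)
The plan is to reduce every claim to elementary estimates in the small quantities $a_S,a_C$. By \eqref{e:sbounds} these are of order $\eta$, and $\sin\beta_j=a_j/2$ with $\sin\alpha_S=a_S$ and $\sin|\alpha_C|=a_C$. Since $\eta<1/10$, every angle involved lies in $(0,0.45)$. So I will use $\sin t\approx t$ with explicit two-sided bounds such as $t-t^3/6\le\sin t\le t$ and $\arcsin$ bounds like $y\le\arcsin y\le y+y^3/5$ for $y\le0.45$.

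For (a), note that $\alpha_S>\beta_S$ because $\sin\beta_S=a_S/2<a_S=\sin\alpha_S$. Hence $\lambda_1>0$, and $\lambda_4>0$ because it is the sine of a sum of two angles in $(0,\pi/2)$ whose sum is below $\pi$.

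For $\lambda_2>0$ we need $\beta_C>\alpha_S$, which is equivalent to $a_C/2>a_S$. This follows from $a_C/2\ge1.5\eta>\eta/\log2\ge a_S$, since $1/\log 2\approx1.4427<1.5$.

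For $\lambda_2<\lambda_3$, all arguments lie in $(0,\pi/2)$, so it suffices that $\beta_C-\alpha_S<|\alpha_C|+\beta_S$. This is clear because $\beta_C<|\alpha_C|$ (from $a_C/2<a_C$) and $-\alpha_S<\beta_S$.

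For (b), I will bound $\lambda_2\le\beta_C-\alpha_S\le\arcsin(a_C/2)-a_S$ and $\lambda_1\ge\sin(\alpha_S-\beta_S)$ with $\alpha_S-\beta_S\ge a_S-\arcsin(a_S/2)\approx a_S/2$. The inequality then reads, to leading order, $a_S(a_C/2-a_S)<a_Ca_S/2$, which holds with margin $a_S^2$.

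The work is to confirm that the cubic error terms, of size $O(\eta^3)$ times constants, are dominated by the margin $a_S^2\ge1.9\eta^2$. Using $a_S,a_C\le0.45$ this should be a routine inequality check. I expect this to be the most delicate bookkeeping, because the leading terms $a_Sa_C/2$ cancel. To be safe I will write $\lambda_2\le (a_C/2)(1+a_C^2/5)-a_S$ and $\lambda_1\ge (a_S-\arcsin(a_S/2))(1-(a_S/2)^2/6)$, and verify the resulting polynomial inequality using $a_S\ge1.38\eta$ and $a_C\le3\eta/\log2$.

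For (c), write $\cos\beta_j-\cos\alpha_j=\sqrt{1-a_j^2/4}-\sqrt{1-a_j^2}$. This equals $\frac{(3/4)a_j^2}{\sqrt{1-a_j^2/4}+\sqrt{1-a_j^2}}\ge\frac38 a_j^2$. Since $a_j\ge1.38\eta$ gives $a_j^2\ge1.9\eta^2$, we get at least $0.71\,\eta^2$, which proves (c). This is essentially immediate.

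So the only real obstacle is keeping the error constants in (b) under control. Everything else follows directly from \eqref{e:sbounds} and Lemma~\ref{l:frame}.
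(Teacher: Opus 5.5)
Your proof is correct. Parts (a) and (c) are essentially the paper's argument. The paper also derives (a) from the chain $0<\beta_S<\alpha_S<\beta_C<|\alpha_C|<\pi/4$, which comes from $2a_S<a_C<1/\sqrt2$. It proves (c) with the same estimate $(\sin^2\alpha_j-\sin^2\beta_j)/(\cos\alpha_j+\cos\beta_j)\ge\frac38a_j^2$.

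For (b) you take a genuinely different, approximate route. The paper expands both sines exactly and substitutes $\sin\alpha_S=a_S$, $\sin\beta_S=a_S/2$, $\sin\beta_C=a_C/2$. The two $a_Sa_C$ terms then combine exactly, giving the identity
\[
a_C\lambda_1-a_S\lambda_2=a_S\bigl[a_C(\cos\beta_S-\cos\alpha_S)+a_S\cos\beta_C\bigr].
\]
The right side is positive using only $\beta_S<\alpha_S$ and $\beta_C<\pi/2$, so there is no cancellation to worry about. Your leading-order margin $a_S^2$ is precisely the term $a_S^2\cos\beta_C$ here.

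Your Taylor-bound version also goes through, with lots of room. Divide by $a_S$ and use $\arcsin y\le y+y^3/5$ once more on $\arcsin(a_S/2)$, so that $a_S-\arcsin(a_S/2)\ge (a_S/2)(1-a_S^2/20)$. It then suffices that $(a_C/2)\bigl(a_C^2/5+11a_S^2/120\bigr)<a_S$. By \eqref{e:sbounds} the left side is below $0.09\,\eta$, while $a_S\ge1.38\,\eta$.

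So the error bookkeeping you flagged as the delicate step is harmless. Still, the exact identity buys a one-line proof with no numerics, and it is independent of the smallness bounds in \eqref{e:sbounds}, which your argument genuinely uses.
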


\begin{proof}
As $\sin\beta_j=a_j/2$, and $2a_S<a_C<1/\sqrt2$ by \eqref{e:sbounds} and $\log2>2/3$, we have
$0<\beta_S<\alpha_S<\beta_C<|\alpha_C|<\pi/4$.  This gives (a), since all
the sines have arguments in $(0,\pi/2)$ and $\beta_C-\alpha_S<|\alpha_C|+\beta_S$.
For (b), expanding the sines and using $\sin\beta_j=a_j/2$,
\begin{align}
  a_C\lambda_1-a_S\lambda_2
  &= a_C\, [ \sin(\alpha_S) \cos (\beta_S) -  \cos(\alpha_S) \sin (\beta_S)]
  +a_S \, [ \sin(\alpha_S) \cos ( \beta_C)-  \cos(\alpha_S) \sin \beta_C)] \notag \\
  &= a_S\, [ a_C ( \cos (\beta_S) -  \cos(\alpha_S)) 
  +a_S \,  \cos ( \beta_C)]
  >0 \, ,
\end{align}
where the last inequality used that $0 < \beta_S < \alpha_S < \beta_C < \frac{\pi}{4}$ and $a_C , a_S > 0$.
For (c), $\cos\beta_j-\cos\alpha_j=(\sin^2\alpha_j-\sin^2\beta_j)/(\cos\alpha_j+\cos\beta_j)\ge\frac38a_j^2\ge\frac38(1.38\,\eta)^2>0.7\,\eta^2$
by \eqref{e:sbounds}.
\end{proof}

\subsection{Bundles of ends}

Since the multiplicities of the arcs are integers, the arcs alone can only
balance a junction approximately.  The next two lemmas provide the ends that
absorb the error.

\begin{lemma}\label{l:twoends}
Let $N\in\N$, $0<\psi\le\pi/2$ and $\chi=\cos(\psi/2)$. Every $y\in\R^2$ with
$|y-N(1+\chi)\,e_1|\le N(1-\chi)$ can be written as
$y=N\big(u(t_1)+u(t_2)\big)$ with $t_1,t_2\in[-\psi,\psi]$.
\end{lemma}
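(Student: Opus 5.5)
The idea is to reduce to a statement about a single unit vector. Write the target as $y=N\,w$ with
\[
w=u(t_1)+u(t_2)=2\cos\tfrac{t_1-t_2}{2}\,u\!\left(\tfrac{t_1+t_2}{2}\right).
\]
Setting $m=(t_1+t_2)/2$ and $d=(t_1-t_2)/2$, the sum of two unit vectors is exactly a vector of length $2\cos d$ in direction $m$. The constraint $t_1,t_2\in[-\psi,\psi]$ becomes $|m|+|d|\le\psi$. So the set of attainable $w$ is
\[
\mathcal{A}=\bigl\{\,2\cos d\,u(m):|m|+|d|\le\psi\,\bigr\}.
\]
It then suffices to show that the closed disk $D$ of center $(1+\chi)e_1$ and radius $1-\chi$ lies in $\mathcal{A}$.

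The disk $D$ is tangent to the unit circle at $e_1$, since its leftmost point is $(1+\chi)-(1-\chi)=2\chi-1$... so I will instead use polar coordinates $w=\ell\,u(m)$. Taking $d=\arccos(\ell/2)\ge0$, the point $w$ lies in $\mathcal{A}$ as soon as $\ell\le 2$ and
\[
|m|\le\psi-\arccos(\ell/2).
\]
Both $\ell\le 2$ and the bound on $m$ follow for points of $D$ once we check the following two facts.

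\textbf{Modulus bounds.} Every point of $D$ has $2\chi-1\le|w|\le 2$, because its distance to the origin lies between $(1+\chi)-(1-\chi)$ and $(1+\chi)+(1-\chi)$.

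\textbf{Angular bound (the main step).} For $w=\ell u(m)\in D$ we need $|m|+\arccos(\ell/2)\le\psi$.
- The extreme cases are easy: at $\ell=2$ we have $w=2e_1$ and $m=0$; at the closest point, $\ell=2\chi-1$ is hit only at $m=0$.
- For fixed $\ell$, the largest $|m|$ in $D$ comes from the law of cosines:
\[
\cos m_{\max}=\frac{\ell^2+(1+\chi)^2-(1-\chi)^2}{2\ell(1+\chi)}=\frac{\ell^2+4\chi}{2\ell(1+\chi)}.
\]
- So the claim is $\arccos\!\left(\frac{\ell^2+4\chi}{2\ell(1+\chi)}\right)+\arccos(\ell/2)\le\psi$ for $\ell\in[2\chi-1,2]$.

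I expect this inequality to be the obstacle. I would try comparing with the boundary of $\mathcal{A}$, which in polar form is the curve $m=\psi-\arccos(\ell/2)$. Parametrize it by $t_2=-\psi$... Equivalently, $\partial\mathcal{A}$ contains the arcs $\{u(\psi)+u(t)\}$ and $\{u(-\psi)+u(t)\}$, which are unit circles centered at $u(\pm\psi)$. So a cleaner route is to show $D$ lies inside both unit disks centered at $u(\pm\psi)$, and also in the region between the rays at angle $\pm\psi$.

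The disk containment is $|u(\psi)-(1+\chi)e_1|+(1-\chi)\le 1$. This follows from
\[
|u(\psi)-(1+\chi)e_1|^2=1-2(1+\chi)\cos\psi+(1+\chi)^2,
\]
combined with $\cos\psi=2\chi^2-1$, which turns the requirement into $\chi^2\le\chi^2$, i.e. equality holds. So $D$ is internally tangent to both disks.

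The region bounded by the two unit circles through the origin and $2\chi u(0)$... which together with the ray condition is exactly $\mathcal{A}$ near $e_1$. I would verify this description of $\mathcal{A}$ by the monotonicity of $\ell\mapsto\psi-\arccos(\ell/2)$. Finally, I would conclude via $\psi\le\pi/2$, which keeps all the arcs in the right half-plane, and multiply by $N$.
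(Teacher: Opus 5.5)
Your reduction is correct: with $t_{1,2}=m\pm d$ the constraint becomes $|m|+|d|\le\psi$, so it suffices to show $|m|+\arccos(\ell/2)\le\psi$ for every $w=\ell\,u(m)$ in the disk $D$ with center $(1+\chi)e_1$ and radius $1-\chi$. However, you leave that inequality unproven, and the geometric route you propose for it is wrong.

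\textbf{The containment points the wrong way.} The set $\mathcal A$ lies \emph{outside} the open unit disks about $u(\pm\psi)$, not inside them. In polar coordinates
\[
\mathcal A=\{\ell\,u(m):\ |m|\le\psi,\ 2\cos(\psi-|m|)\le\ell\le2\},
\]
whereas the open unit disk about $u(\psi)$ is $\{\ell<2\cos(\psi-m)\}$. For instance, $2e_1\in\mathcal A$ but $|2e_1-u(\psi)|^2=5-4\cos\psi>1$.

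\textbf{The tangency computation is false.} In fact $|u(\psi)-(1+\chi)e_1|^2=4+4\chi-3\chi^2-4\chi^3$. For $\psi=\pi/2$ this is about $3.91$, so $D$ is nowhere near that unit disk. Relatedly, the two unit circles meet at $2\cos\psi\,e_1$, not at $2\chi e_1$, and $D$ is tangent to the circle of radius $2$ at $2e_1$, not to the unit circle at $e_1$.

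\textbf{The missing idea.} Decouple the two terms and bound each by $\psi/2$, as the paper does.
\begin{enumerate}
\item[(a)] Your modulus bound is miscomputed. The triangle inequality gives $\ell\ge(1+\chi)-(1-\chi)=2\chi$, not $2\chi-1$. From $\ell\ge2\chi=2\cos(\psi/2)$ you get $\arccos(\ell/2)\le\psi/2$ at once.
\item[(b)] The disk $D$ subtends half-angle $\arcsin\frac{1-\chi}{1+\chi}$ at the origin; this is the maximum over $\ell$ of your law-of-cosines angle. Moreover $\frac{1-\chi}{1+\chi}\le\sqrt{1-\chi^2}=\sin(\psi/2)$, which is equivalent to $1-\chi\le(1+\chi)^3$. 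Hence $|m|\le\psi/2$.
\end{enumerate}
Adding (a) and (b) gives $|m|+\arccos(\ell/2)\le\psi$, so $t_{1,2}=m\pm\arccos(\ell/2)$ works.

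If you want to keep your geometric picture, the correct statement is that $D$ lies in the sector $|m|\le\psi$, in $\overline{B_2}$, and outside both open unit disks about $u(\pm\psi)$. The last condition reduces to $4+4\chi-3\chi^2-4\chi^3\ge(2-\chi)^2$, i.e.\ $4\chi(1-\chi)(2+\chi)\ge0$.
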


\begin{proof}
After dividing through by $N$, we can assume that $N=1$, so $|y-(1+\chi)\,e_1|\le (1-\chi)$. 
The triangle inequality gives that 
 $2\chi\le |y|\le 2$.   Write $y = \bar{r}  \, u (s)$. We claim next that
\begin{align}	\label{e:initi}
	|s|\le\arcsin\frac{1-\chi}{1+\chi}\le\frac\psi2 \, .
\end{align}
To prove the first inequality, observe that
$$
  \bar{r} ^2 + (1+\chi)^2 - 2\, (1+\chi)\,  \bar{r}  \cos s	=|y-(1+\chi)\,e_1^2 |\le (1-\chi)^2 \, ,
$$
and simplifying this and then applying the   inequality $2ab \le a^2 + b^2$  gives
$$
	 \bar{r} ^2 + 4\, \chi \leq 2(1+\chi) \,  \bar{r}  \cos s \leq  \bar{r} ^2 + (1+\chi)^2 \, \cos^2 s \, .
$$
This implies the first inequality in (\ref{e:initi}).   The second inequality follows 
immediately.
Put $\delta=\arccos(|y|/2)\in[0,\psi/2]$ and $t_{1,2}= s \pm\delta$. Then
$|t_j|\le\psi$ and $\big(u(t_1)+u(t_2)\big)=2\cos\delta\;u(s)=y$.
\end{proof}

\begin{lemma}\label{l:endscone}
Let $k\ge0$ and $|\alpha|\le\eta/4$. The catenary ray $\Gamma(2^k,2^k\eta,\alpha)$
satisfies $|h(\rho)|\le1.1\,\eta\rho$ for all $\rho\ge2^k$.
\end{lemma}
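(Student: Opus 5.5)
By Lemma~\ref{l:catenary}(e), both the scaling in $k$ and the reflection $(h_0,\alpha)\mapsto(-h_0,-\alpha)$ are available. Scaling by $\lambda=2^{-k}$ reduces the claim to $k=0$, so we must show that $h(\rho)=h(\rho;1,\eta,\alpha)$ satisfies $|h(\rho)|\le 1.1\,\eta\rho$ for all $\rho\ge1$. If $\alpha=0$ the ray is flat, $h\equiv\eta\le\eta\rho$, and we are done. Otherwise write $x=\rho\ge1$ and $s=|\sin\alpha|\le\sin(\eta/4)\le\eta/4<1/40$. Lemma~\ref{l:catenary}(c) gives
\[
  |h(\rho)-\eta|\le s\,(\log x+s^2),
\]
and we estimate by the triangle inequality.

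First, the upper bound. We have $h(\rho)\le \eta+s(\log x+s^2)\le \eta+\tfrac{\eta}{4}(\log x+10^{-3})$. Dividing by $\eta x$, it suffices to show
\[
  \frac1x\Bigl(1.001/4\cdot 1+1+\tfrac14\log x\Bigr)\le 1.1 .
\]
More precisely, we need $g(x)=x^{-1}\bigl(1+\tfrac14\log x+\tfrac14 s^2\bigr)\le1.1$. The function $x^{-1}(A+\tfrac14\log x)$ attains its maximum over $x\ge1$ either at $x=1$ or at the critical point $\log x=1-4A$. Since $A=1+s^2/4>1/4$, that critical point lies at $x<1$, so on $[1,\infty)$ the function is decreasing and $g(x)\le g(1)=1+s^2/4<1.1$.

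Second, the lower bound. Here $h(\rho)\ge \eta-s(\log x+s^2)\ge \eta-\tfrac{\eta}{4}(\log x+10^{-3})$. This can become negative, so we must check that it stays $\ge-1.1\,\eta x$. This follows from
\[
  1+1.1\,x\ \ge\ \tfrac14(\log x+10^{-3}),
\]
which holds trivially because $\log x\le x$.

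The only delicate point is the upper bound near $x=1$, where the ray starts on the line $h=\eta\rho$. The bound $|\alpha|\le\eta/4$ keeps the factor $\tfrac14\log x$ so small that $x^{-1}(1+\tfrac14\log x)$ is already decreasing at $x=1$. There is therefore no interior maximum, unlike the arc estimate in Lemma~\ref{l:frame}, and the constant $1.1$ has ample room.
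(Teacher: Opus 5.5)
Your proof is correct and follows essentially the paper's route: reduce to $k=0$ via Lemma~\ref{l:catenary}(e), apply Lemma~\ref{l:catenary}(c) with $|\sin\alpha|\le\eta/4$, and finish with an elementary inequality. The only difference is the last step: the paper uses $\log\rho\le\rho/e$ to get $|h(\rho)-\eta|<\eta\rho/10$ and then $|h|\le\eta+\eta\rho/10\le1.1\,\eta\rho$, whereas you treat the upper and lower bounds separately, getting a slightly sharper upper bound $h\le(1+s^2/4)\,\eta\rho$ by monotonicity; your first intermediate display in the upper-bound paragraph is garbled, but the inequality $g(x)\le1.1$ that replaces it is the right one and is proved correctly.
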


\begin{proof}
By Lemma~\ref{l:catenary}(e) we may take $k=0$, so $r_0 = 1$, $h_0 = \eta$, and $\rho\ge1$. By
Lemma~\ref{l:catenary}(c), $\sin|\alpha|\le\eta/4\le1/40$ so we see that
\[
  |h(\rho)-\eta|\le\frac\eta4\Big(\log\rho+\frac1{1600}\Big)\le \frac{\eta}{6400} + \frac{\eta\, \rho}{4\,e}
  \le \eta \, \rho \, \left( \frac{1}{6400} + \frac{1}{4\, e} \right) < \frac{\eta \, \rho}{10} 
\, .
\]
The second inequality used the elementary inequality $\log\rho\le\rho/e$ for $\rho \ge 1$. The lemma follows from this and the triangle inequality since $\eta \le \eta \, \rho$.
\end{proof}

\subsection{The construction and the proof of Theorem \ref{t:varifold}}

Put
\[
  \psi=\eta/4,\qquad N=\lceil 200\,\eta^{-2}\rceil,\qquad
  p=N\big(1+\cos\tfrac\psi2\big)\,e_1,\qquad \mu=1+0.7\,\eta^2 .
\]
By Lemma~\ref{l:twoends}, two bundles of $N$ ends with angles in
$[-\psi,\psi]$ can produce any weighted tangent sum in the disk of radius
$N(1-\cos\frac\psi2)$ about $p$.  We show next that this includes $B_1 (p)$. The Lagrange version of Taylor's Remainder Theorem term gives $\zeta_y \in [-y,y]$  for each  $|y| \le 1/10$ so  that
$1-\cos y=   (\cos y)'' (\zeta_y) \, \frac{y^2}{2} \geq (.45) \, y^2$ and thus
     $$N \bigl(1-\cos\frac\psi2 \bigr) \ge 200\,\eta^{-2} \, \left( .45 \, (\eta/8)^2 \right) 
      >1 \, . $$

\emph{The multiplicities.} For $d\in\N$ we define integers $s_k,c_k$ and
vectors $Y_k$ recursively.  Let $Y_0=de_1$.  Given $Y_k$, let
$s_k^*,c_k^*\in\R$ be the solution of
\begin{equation}\label{e:target}
  s_k^*\,u(\alpha_S)+c_k^*\,u(\alpha_C)=Y_k-p ,
\end{equation}
let $s_k$ and $c_k$ be $s_k^*$ and $c_k^*$ rounded to a nearest integer, and
let $Y_{k+1}=s_k\,u(\beta_S)+c_k\,u(\beta_C)$.  Thus $-Y_k$ is the weighted
sum of the inward tangents at $P_k^+$ of the disk ($k=0$) or of the arcs
arriving there ($k\ge1$), and the arcs leaving $P_k^+$ are chosen to leave the fixed vector $p$, up to rounding, for
the ends; the shift by $p$ is needed because the ends cannot carry a tangent
sum near $0$.  By \eqref{e:solve} and $[u(\alpha_C),e_1]=a_C$,
$[e_1,u(\alpha_S)]=a_S$,
\begin{equation}\label{e:onestep}
\begin{aligned}
  \Delta\,c_0^*&=(d-|p|)\,a_S, &
  \Delta\,s_0^*&=(d-|p|)\,a_C,\\
  \Delta\,c_{k+1}^*&=\lambda_1s_k-\lambda_2c_k-|p|\,a_S, &
  \Delta\,s_{k+1}^*&=\lambda_3s_k+\lambda_4c_k-|p|\,a_C .
\end{aligned}
\end{equation}

\emph{The ends.} Let $\varrho_k=Y_k-s_k\,u(\alpha_S)-c_k\,u(\alpha_C)$; this
is the tangent sum the ends at $P_k^+$ must produce.  By \eqref{e:target},
$\varrho_k=p+(s_k^*-s_k)\,u(\alpha_S)+(c_k^*-c_k)\,u(\alpha_C)$, so
$|\varrho_k-p|\le1<N(1-\cos\frac\psi2)$, and Lemma~\ref{l:twoends} gives
$t_{k,1},t_{k,2}\in[-\psi,\psi]$ with
\begin{equation}\label{e:ends}
  \varrho_k=N\big(u(t_{k,1})+u(t_{k,2})\big).
\end{equation}

By Lemma~\ref{l:angles}(a),(b) we may fix $q$ with
$\max\{a_S/a_C,\,\lambda_1/\lambda_3\}<q<\lambda_1/\lambda_2$.  Let
$H_k=s_k\cos\alpha_S+c_k\cos\alpha_C$ be the horizontal component of the
tangent sum of the arcs leaving $P_k^+$. The next lemma shows that the outgoing horizontal forces grow geometrically, which forces the number of catenoidal pieces to grow as well; this will imply the desired area growth. 

\begin{lemma}\label{l:mult}
If $d$ is large enough, then for all $k\ge0$
\begin{equation}\label{e:growthA}
   H_k\ge\mu^k \qquad\text{and }   1\le c_k\le q \, s_k;
\end{equation}
in particular $s_k\ge1$.
\end{lemma}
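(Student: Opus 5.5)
The proof is by induction on $k$. I would prove a slightly stronger statement that has room to absorb the fixed offset $p$ and the rounding errors. Set
\[
B=\frac{|p|+2}{\mu-1}=\frac{|p|+2}{0.7\,\eta^2},
\]
a constant that depends only on $\eta$. The inductive hypothesis will be
\[
H_k-B\ge \mu^k,\qquad s_k\ge \sigma_0,\qquad 1\le c_k\le q\,s_k,
\]
where $\sigma_0$ is a large threshold, fixed in terms of $\eta$, $|p|$, the $\lambda_i$, $a_S,a_C$ and the gaps $q-a_S/a_C$, $q-\lambda_1/\lambda_3$, $\lambda_1-q\lambda_2$. The condition $H_k-B\ge\mu^k$ gives $H_k\ge\mu^k$. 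It also gives $s_k$ large, because $c_k\le q s_k$ implies $H_k\le s_k+c_k\le (1+q)s_k$.

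\emph{Base case.} By \eqref{e:onestep}, $s_0^*$ and $c_0^*$ equal $(d-|p|)a_C/\Delta$ and $(d-|p|)a_S/\Delta$, so both are linear in $d$ with positive slope. Their ratio is $c_0^*/s_0^*=a_S/a_C<q$, and the inequality is strict. After rounding by at most $1/2$, for $d$ large we get $1\le c_0\le q s_0$, $s_0\ge\sigma_0$, and $H_0\ge B+1$, since $H_0$ is also linear in $d$ with positive slope.

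\emph{Horizontal growth.} Write $\mathrm{hor}$ for the first coordinate. By Lemma~\ref{l:angles}(c),
\[
\mathrm{hor}(Y_{k+1})=s_k\cos\beta_S+c_k\cos\beta_C\ge H_k+0.7\,\eta^2(s_k+c_k)\ge \mu H_k ,
\]
using $s_k,c_k\ge0$ and $H_k\le s_k+c_k$. From \eqref{e:target}, the vector $s_{k+1}u(\alpha_S)+c_{k+1}u(\alpha_C)$ differs from $Y_{k+1}-p$ by at most $|s^*-s|+|c^*-c|\le1$. Hence
\[
H_{k+1}\ge \mu H_k-|p|-1 .
\]
By the choice of $B$ this gives $H_{k+1}-B\ge \mu(H_k-B)\ge\mu^{k+1}$.

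\emph{Ratio bounds.} This is the vertical balance and is where the care goes. I use the explicit recursion \eqref{e:onestep}. For the lower bound, $c_k\le q s_k$ gives
\[
\Delta\, c_{k+1}^*\ge(\lambda_1-q\lambda_2)s_k-|p|a_S .
\]
Since $\lambda_1-q\lambda_2>0$, this is at least $2\Delta$ once $s_k\ge\sigma_0$, so $c_{k+1}\ge1$. For the upper bound, $\lambda_2 c_k\ge0$ and $\lambda_4 c_k\ge0$ give
\[
\Delta\, c_{k+1}^*\le \lambda_1 s_k,\qquad \Delta\, s_{k+1}^*\ge \lambda_3 s_k-|p|a_C .
\]
Then $q>\lambda_1/\lambda_3$ gives $c_{k+1}^*+\tfrac12\le q(s_{k+1}^*-\tfrac12)$ once $s_k\ge\sigma_0$, so $c_{k+1}\le q s_{k+1}$. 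Finally $s_{k+1}\ge H_{k+1}/(1+q)\ge (B+1)/(1+q)$, which closes the induction provided $B+1\ge(1+q)\sigma_0$.

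The main obstacle is this compatibility of constants. The threshold $\sigma_0$ is determined by $|p|$ and the angle gaps, not by $B$, so it may exceed $(B+1)/(1+q)$. If it does, I would replace $B$ by a larger $B'\ge\max\{B,(1+q)\sigma_0\}$; the horizontal recursion still gives $H_{k+1}-B'\ge\mu(H_k-B')$, since increasing $B$ only helps there. I would then take $d$ large enough that $H_0\ge B'+1$. Because every constant involved depends only on $\eta$ and not on $k$, one choice of $d$ works for all $k$.
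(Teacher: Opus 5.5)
Your proof is correct and follows essentially the paper's argument: the horizontal recursion $H_{k+1}-L\ge\mu(H_k-L)$ comes from Lemma~\ref{l:angles}(c), and the ratio bounds $1\le c_{k+1}\le q\,s_{k+1}$ come from \eqref{e:onestep} once $s_k$ exceeds a constant depending only on $\eta$. The only differences are cosmetic. You bound the horizontal loss by $|p|+1$ using the rounding in \eqref{e:target}, where the paper uses $(\varrho_k)_1\le 2N$. You make $s_k$ large by enlarging the offset constant to $B'$, where the paper uses $H_k\ge H_0\ge d-2N$.
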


\begin{proof}
\emph{Horizontal balance.}  The first component of $\varrho_k$ is at most
$2N$ by \eqref{e:ends}, so $H_k\ge(Y_k)_1-2N$.  Hence $H_0\ge d-2N$, and, if
$s_{k-1},c_{k-1}\ge0$, Lemma~\ref{l:angles}(c) gives
\[
  H_k\ge s_{k-1}\cos\beta_S+c_{k-1}\cos\beta_C-2N
  \ge H_{k-1}+(\mu-1)(s_{k-1}+c_{k-1})-2N\ge\mu H_{k-1}-2N ,
\]
as $s_{k-1}+c_{k-1}\ge H_{k-1}$.
With $L=2N/(\mu-1)$, this says
\begin{equation}\label{e:flux}
  H_k-L\ge\mu\,(H_{k-1}-L).
\end{equation}

\emph{Induction.}  Take $d\ge2N+L+1$, so that $H_0-L\ge1$.  It follows from (\ref{e:flux})
that 
$$
H_k-L\ge\mu^k(H_0-L) \ge \mu^k  , 
$$
giving the first claim in (\ref{e:growthA}). We show next that 
$1\le c_k\le q\,s_k$. 
 For $k=0$, \eqref{e:onestep} shows that $c_0$ and
$s_0$ differ by at most $\frac12$ from $(d-|p|)a_S/\Delta$ and
$(d-|p|)a_C/\Delta$; as $a_S/a_C<q$, we get $1\le c_0\le q\,s_0$ for $d$
large.  Suppose the claim holds for $0, \dots, k$.  Then $H_k\ge H_0\ge d-2N$, and,
since $H_k\le s_k+c_k\le(1+q)\,s_k$, also $s_k\ge(d-2N)/(1+q)$.  By
\eqref{e:onestep}, $0\le c_k\le q\,s_k$, $|p|\le2N$ and rounding,
\[
  \Delta\,c_{k+1}\ge(\lambda_1-q\lambda_2)\,s_k-2Na_S-\tfrac\Delta2,\qquad
  \Delta\,c_{k+1}\le\lambda_1s_k+\tfrac\Delta2,\qquad
  \Delta\,s_{k+1}\ge\lambda_3s_k-2Na_C-\tfrac\Delta2 .
\]
Since $\lambda_1-q\lambda_2>0$ and $q\lambda_3-\lambda_1>0$, these give
$c_{k+1}\ge1$ and $c_{k+1}\le q\,s_{k+1}$ once $s_k$ exceeds a constant
depending only on $\eta$, which holds for $d$ large.  In particular
$s_k,c_k\ge0$, so \eqref{e:flux} holds for $k+1$ and gives
$H_{k+1}-L\ge\mu^{k+1}(H_0-L)$.
\end{proof}

  The stationary varifold $V$ consists of the two initial disks, 
plus the upward and downward catenoid pieces (and their reflections), together with the outgoing catenoid ends. 
The initial disks have multiplicity $d$ from  Lemma~\ref{l:mult} and the other pieces have their multiplicities defined recursively above to achieve balancing (and thus give stationarity). 

\begin{proof}[Proof of Theorem~\ref{t:varifold}]
The construction  immediately gives that $V$ is  an integral varifold satisfying (iii) and (iv). The choices of multiplicities $c_k$ and $s_k$ and the angles of the catenoidal ends give the balancing condition at each singular circle, so Lemma \ref{l:network} gives that $V$ is stationary.

\emph{The cone condition.} By \eqref{e:arccone} and
Lemma~\ref{l:endscone}, every arc and every end, together with its image
under (reflection) $\sigma$, satisfies $|h|\le1.1\,\eta\rho=\varepsilon\rho$ where it
is defined, that is, for $\rho\ge1$. The disks satisfy $|x_3|=\eta \leq \varepsilon \, r + \eta$. This proves (i).

\emph{Growth.}  It suffices to consider the case $R= 2^{k+2}$ for $k\in \N$. 
By (\ref{e:arccone}),
the surfaces $\Rot\tau S_k$ and $\Rot\tau C_k$,
$\tau\in\{\mathrm{id},\sigma\}$, lie in
$\{2^k\le r\le2^{k+1},\ |x_3|\le\varepsilon r\}\subset
B_{(1+\varepsilon^2)^{1/2}R/2}\subset B_R$, and each of them is a graph over
the annulus $\{2^k\le r\le2^{k+1}\}\subset\Pi$, which has area $3\pi4^k$.
Hence, by Lemma~\ref{l:mult}  
\[
  \|V\|(B_R)\ge2(s_k+c_k)\cdot3\pi\,4^k\ge6\pi\,H_k\,4^k\ge6\pi\,(4\mu)^k
  =6\pi\,2^{k(2+\log_2 \mu)}=
  6\pi\, \left( \frac{R}{4} \right)^{2+\log_2 \mu} .
\]
To complete the proof of (ii), it suffices to show that if $0 < \eta < 1/10$, then 
\begin{align}	\label{e:showlog2}
	\log_2 \mu = \log_2 (1 + (.7) \, \eta^2) 
	 >  \frac{4}{5} \, (1.1 \, \eta)^2
	=  \frac{4}{5} \, \varepsilon^2 \, .
\end{align}
We just need to prove the inequality.  This follows from $\frac{4}{5} \, (1.1)^2 \, \log 2 \approx .671 < \frac{70}{101}$ and   $\log  (1 +s)  > \frac{100}{101} \, s$ for $0 < s < .01$ which implies that
$\log_2 ( 1 + (.7) \, \eta^2) \ge \frac{70}{101 \, \log 2} \, \eta^2$.

\end{proof}

\begin{remark}[Smooth examples]\label{r:smooth}
We do not know whether $V$, or a varifold like it, is a limit of smooth
minimal surfaces. A smooth example in the same spirit would require two
further steps. First, the multiplicities would have to be resolved into
distinct sheets. Second, each
junction circle  would
have to be desingularized. Saddle towers \cite{Karcher,PerezTraizet} bent
along circles, as in Kapouleas' desingularization of intersecting catenoids
and planes \cite{Kapouleas}, are natural candidates.  An alternative approach is given by \cite{MM04,MM05}.
\end{remark}

\section{Arbitrary complex dimension and codimension}\label{s:higher-domains}

There are natural higher-dimensional and higher-codimensional versions of the holomorphic results.  Let $1\leq q\leq N$ and $\Pin\subset\C^N$ be a real $2\,q$-plane.    Let $P$ and $P^\perp$
denote the orthogonal projections onto $\Pin$ and $\Pin^\perp$ and define $S_{\varepsilon,K}(\Pin)$ and 
  $C_\varepsilon(\Pin)$ as before.
  
 We will first recall the replacement for the single K\"ahler angle of a two-plane.
It is easy to see that the linear operator $Q \equiv P J|_\Pin : \Pin \to \Pin$ is skew-symmetric.  It follows that if  $v \in \Pin$ is an eigenvector for the
symmetric operator
$Q^T \, Q = - Q^2$, then $Q \, v$ is also an eigenvector with the same eigenvalue and is orthogonal to $v$ (by skew symmetry). 
Thus $-Q^2$ has $q$ eigenvalues, each in $[0,1]$ (since both $P$ and $J$ have operator norm $\le 1$), and each eigenvalue has multiplicity two.  Since the eigenvalues are in $[0,1]$, they can be written as $\cos^2 \theta_i$ for

\[
 0\leq\theta_1\leq\cdots\leq\theta_q\leq \frac{\pi}{2} \, .
\]
The $\theta_i$'s are the \emph{K\"ahler angles} of $\Pin$; note that they might not be distinct. 

\vskip1mm
Below,  $\Sigma$ is a connected $q$ (complex) dimensional manifold and $F:\Sigma\to\C^{N}$ is a proper holomorphic immersion.

\begin{theorem}[Holomorphic thresholds]\label{t:higher-dimensional}
We have three regimes:
\begin{enumerate}
\item[(i)] If $\varepsilon<\tan\frac{\theta_1}{2}$, then no holomorphic $F$ is contained in
$S_{\varepsilon,K}(\Pin)$ for any $K<\infty$.
\item[(ii)] If $\tan\frac{\theta_q}{2} \le \varepsilon<\cot\frac{\theta_q}{2}$, then holomorphic $F$'s  exist, but every one
  is an affine algebraic variety with Euclidean volume growth.
\item[(iii)] If $\varepsilon>\cot\frac{\theta_1}{2}$, 
then there are complete proper holomorphic
immersions of $\D^q$ 
in $C_\varepsilon(\Pin)$ whose volume grows faster than
 $R^{2q}(\log R)^q$.
\end{enumerate}
\end{theorem}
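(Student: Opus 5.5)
The plan is to reduce everything to linear algebra for complex $q$-planes relative to $\Pin$, and then rerun the two-dimensional arguments.

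\textbf{Step 1: linear algebra.} Put $Q=PJ|_\Pin$ into block form. Pick an orthonormal eigenbasis of $-Q^2$ of the form $e_{2i-1}$, $e_{2i}=Qe_{2i-1}/\cos\theta_i$. As in Lemma~\ref{l:normal}, this gives a splitting $\C^N=W_1\oplus\cdots\oplus W_q\oplus W'$ into mutually orthogonal complex subspaces. Each $W_i$ has complex dimension $2$ (or $1$ when $\theta_i=0$), and $\Pin\cap W_i$ is a two-plane of K\"ahler angle $\theta_i$. $W'$ is a complex subspace orthogonal to $\Pin+J\Pin$, so it lies in $\Pin^\perp$.

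Inside each $W_i$, take the complex lines $L_\pm^i$ from Lemma~\ref{l:mu}. Set $\Lambda_-=\bigoplus_i L_-^i$, a complex $q$-plane. Every vector of $\Lambda_-$ has vertical fraction at most $\sin^2(\theta_q/2)$, so $\Lambda_-\subset C_\varepsilon(\Pin)$ once $\varepsilon\ge\tan(\theta_q/2)$. Let $\Lambda_+$ be the orthogonal complement, $\bigoplus_i L_+^i\oplus W'$. Every unit vector of $\Lambda_+$ has $|P^\perp x|^2\ge\cos^2(\theta_q/2)$, so $\Lambda_+\cap C_\varepsilon(\Pin)=\{0\}$ for $\varepsilon<\cot(\theta_q/2)$. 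This is the analogue of Corollary~\ref{c:lines}.

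The analogue of Lemma~\ref{l:mu}(ii) says that for every complex $q$-plane $V$, the normalized tilt excess $\mu_\Pin(V)=\frac1{2q}\Tr_V|P^\perp\cdot|^2$ satisfies $\mu_\Pin(V)\ge\sin^2(\theta_1/2)$. Pointwise, each unit vector in a complex line has $\frac12(|P^\perp v|^2+|P^\perp Jv|^2)\ge\sin^2(\theta_1/2)$. To see this, decompose $v$ along the $W_i$ and $W'$ and use the computation in Lemma~\ref{l:mu} block by block; the $W'$ part contributes fully. Averaging over a unitary basis of $V$ gives the bound.

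\textbf{Step 2: part (i).} Apply $\Delta_\Sigma$ to $\rho_\kappa=|Px|^2-\kappa|P^\perp x|^2$ on the Kähler manifold $F(\Sigma)$. Coordinates are pluriharmonic, so
\[
\Delta\rho_\kappa=4q\bigl[1-(1+\kappa)\mu_\Pin(T\Sigma)\bigr].
\]
With $\mu_0=\sin^2(\theta_1/2)$, the argument of Proposition~\ref{p:tangent-obstruction} goes through verbatim in real dimension $2q$. It uses only superharmonicity, properness, and the minimum principle on an exhaustion. It shows $\varepsilon\ge\tan(\theta_1/2)$.

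\textbf{Step 3: part (ii).} Existence is given by $\Lambda_-$ itself. For rigidity, write $z_-:\C^N\to\Lambda_-\cong\C^q$ for the orthogonal projection, which is complex linear. By Step 1 and Lemma~\ref{l:zminus}, $|z_-(x)|\ge c|x|$ on $S_{\varepsilon,K}(\Pin)\setminus B_{R_0}$. Then $h=z_-\circ F:\Sigma\to\C^q$ is a proper holomorphic map, so it is finite, hence a branched covering of finite degree $m$ outside a compact set.

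For the volume bound I would not redo the coarea argument. Instead, project the image: the Wirtinger/Lelong mass of $F(\Sigma)\cap\{|z_-|<R\}$ is controlled by $\int(\ddc|z|^2)^q$, which is comparable to $\int(\ddc|z_-|^2)^q$ plus mixed terms. A cleaner route is to bound the coordinate energies via Lemma~\ref{l:twofcts}, applied along the pluriharmonic coordinates $x_i\circ F$ with $\eta=|h|$. Alternatively, use $|x|\le C(1+|h|)$ and the Lelong–Jensen formula for the plurisubharmonic exhaustion $\log|h|$, whose Monge–Ampère mass is $m$ times that of $\C^q$. Either way this gives $\Vol(F^{-1}(B_R))\le Cm R^{2q}$.

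Bishop–Stoll then gives affine algebraicity of the closed analytic image, which is closed by the proper mapping theorem. I expect this volume step to be the main obstacle, since the $q=1$ identity \eqref{e:e36} has no direct analogue. The finite-degree covering is the key replacement.

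\textbf{Step 4: part (iii).} Take the block $W_1$, where $\theta_1$ is the smallest nonzero angle; if $\theta_1=0$ the threshold is infinite. Theorem~\ref{t:weakiii} gives a proper holomorphic immersion $F_1:\D\to W_1$ lying in $C_{\varepsilon'}(\Pin\cap W_1)$ for some $\varepsilon'<\varepsilon$. In the other blocks, use the analogous disks when $\varepsilon>\cot(\theta_i/2)$. For the remaining blocks, use suitable proper disks in cones of aperture below $\varepsilon$, or simply the lines $L_-^i$ composed with a proper map $\D\to\C$, which is harmless since $\tan\le\cot$.

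Set $F=(F_1,\dots,F_q):\D^q\to\C^N$. A direct sum of vectors each in a cone $C_{\varepsilon'}$ lies in $C_{\varepsilon'}$, because $|P^\perp x|^2=\sum_i|P^\perp x_i|^2\le\varepsilon'^2\sum_i|Px_i|^2$. Here $P$ respects the block decomposition.

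For growth, $F^{-1}(B_R)\supset\prod_iF_i^{-1}(B_{R/\sqrt q})$ and volume is the product of areas. Taking factors each growing faster than $R^2\log R$ gives growth faster than $R^{2q}(\log R)^q$. This requires every factor to be superquadratic, i.e.\ every block above its own $\cot$ threshold. If that fails for some block, I would instead transport one fixed Lagrangian disk construction through a complex-linear $T$ as in Lemma~\ref{l:transport}, adapted to $\Pin$. Checking this case is the second delicate point.
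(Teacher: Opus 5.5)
Your linear algebra, part (i), and the block-sum construction in part (iii) are essentially the paper's arguments. The genuine gap is the step you flag yourself. The Euclidean volume bound in part (ii) is never proved, and none of the three routes you list closes as stated.

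Put $g=\pi_+\circ F$, $\alpha_F=\tfrac12\ddc|h|^2$, $\gamma_F=\tfrac12\ddc|g|^2$ and $\beta_F=\alpha_F+\gamma_F$, so that $\Vol\{|h|<r\}=\frac1{q!}\int_{\{|h|<r\}}(\alpha_F+\gamma_F)^q$. The finite degree $m$ of $h$ controls only the pure term, $\frac1{q!}\int_{\{|h|<r\}}\alpha_F^q=m\,\omega_{2q}r^{2q}$. The mixed terms $\alpha_F^k\wedge\gamma_F^{q-k}$ with $k<q$ carry the volume in the $\Lambda_+$-directions. The cone condition gives only the $C^0$ bound $|g|\le C(1+|h|)$, not a pointwise bound on $dg$ in terms of $dh$.

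\begin{itemize}
\item Your route via Lemma~\ref{l:twofcts} is circular for $q\ge2$. It reduces the volume to $\int_{\{|h|<2R\}}|\nabla|h||^2$ in the induced metric. For $q=1$ that integral was evaluated by the flux identity \eqref{e:e36}. For $q\ge2$, however, $\log|h|$ is only plurisubharmonic. Moreover, $|\nabla|h||^2\,d\mathrm{vol}$ is a constant multiple of $d|h|\wedge d^c|h|\wedge\beta_F^{q-1}$, and expanding $\beta_F^{q-1}$ produces the same $\gamma_F$ terms.
\item The Lelong--Jensen route with weight $\log|h|$ likewise only recomputes the degree. Passing from it to the volume requires a comparison theorem between the weights $\log|h|$ and $\log|F|$ at infinity. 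That comparison is precisely the nontrivial step, and you do not supply it.
\end{itemize}

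The missing idea is how to convert the $C^0$ cone bound into integral control of $\gamma_F$. The paper does this by integration by parts (Lemma~\ref{lem:transfer}). Take the cutoff $\eta_r=\chi(|h|^2/r^2)$ and the closed semipositive form $\Theta=\alpha_F^k\wedge\gamma_F^{m-1}$. Stokes gives $\int\eta_r\,\alpha_F^k\wedge\gamma_F^m=\tfrac12\int|g|^2\,\ddc\eta_r\wedge\Theta$. On the support of $\ddc\eta_r$ the cone condition gives $|g|^2\le 6c^{-2}r^2$, and Cauchy--Schwarz gives $\pm\ddc\eta_r\le C_\chi r^{-2}\alpha_F$. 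So one factor $\gamma_F$ is traded for $\alpha_F$, at the cost of the constant $3C_\chi c^{-2}$ and of enlarging the radius to $\sqrt2\,r$. Iterating and finishing with the degree identity gives $\Vol\{|h|<R\}\le C_1 mR^{2q}$, after which Bishop--Stoll applies as you say. For algebraicity alone you could instead invoke Rudin's classical criterion for analytic sets contained in $\{|\pi_+z|\le C(1+|\pi_-z|)\}$, but that is a citation, not your sketch.

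Two smaller points.
\begin{itemize}
\item In part (iii) your ``second delicate point'' cannot occur. The hypothesis forces $\theta_1>0$, and $\cot(\theta/2)$ is decreasing, so $\varepsilon>\cot\frac{\theta_1}{2}\ge\cot\frac{\theta_j}{2}$ for every $j$. Every block is therefore above its own threshold, which is exactly what the paper uses. Your fallback would fail anyway, since there is no proper holomorphic map $\D\to\C$.
\item In Step~1 the normalization $e_{2i}=Qe_{2i-1}/\cos\theta_i$ breaks down when $\theta_i=\pi/2$. There one pairs an arbitrary orthonormal basis of $\ker Q$, as in the paper's $E\oplus JE$.
\end{itemize}
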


\subsection{Complex linear algebra}

\begin{lemma}	\label{l:spaces}
$\C^N = \R^{2N}$ is the orthogonal sum 
\begin{align}	\label{e:bigde}
\C^N = W \oplus E \oplus JE \oplus \Pi_{\C} \oplus_{ \{ i \, | \, 0 < \theta_i < \pi/2 \} } \, W_i
\end{align}
of the following subspaces:
\begin{enumerate}
\item For each $i$ with $\theta_i \in (0 , \pi/2)$, we get a $4$-dimensional space $W_i$ spanned by
the two-dimensional eigenspace $\Pi_i \subset \Pi$ and $J \, \Pi_i$, $Q: W_i \to \Pi_i$, and  $W_i \perp W_j$ for $i \ne j$.
\item The complex subspace $\Pi_{\C}$ of $\Pi$ corresponding to $\theta_i$'s equal to $0$.
\item The  subspace $E \subset \Pi$ corresponding to $\theta_i$'s equal to $\frac{\pi}{2}$.
\item The  subspace  $J\, E \subset \Pi^{\perp}$.
\item The space $W$ perpendicular to both $\Pi$ and $J \, \Pi$.
\end{enumerate}
Moreover, the spaces $W$, $E \oplus JE$, $\Pi_{\C}$, and each $W_i$ are all preserved by $J$. Finally, 
$P^{\perp}$ is the identity on $W$ and $JE$, vanishes on $\Pi_{\C}$ and $E$, and maps $W_j$ to $W_j$.
\end{lemma}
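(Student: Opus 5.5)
The plan is a spectral decomposition of the skew-symmetric operator $Q=PJ|_\Pin$, followed by a check of how $J$ and $P^\perp$ act on each piece. Since $-Q^2=Q^TQ$ is symmetric on $\Pin$ with eigenvalues $\cos^2\theta_i$, I would split $\Pin$ orthogonally into the kernel of $Q$ (eigenvalue $0$, i.e.\ $\theta_i=\pi/2$), which I call $E$; the eigenspace for eigenvalue $1$ ($\theta_i=0$), which I call $\Pi_{\C}$; and the two-dimensional $Q$-invariant pieces $\Pi_i$ for $0<\theta_i<\pi/2$. When eigenvalues repeat, I would choose an orthonormal eigenbasis of the form $v, Qv/|Qv|$ in each eigenspace, so that each $\Pi_i$ is $Q$-invariant and the pieces are mutually orthogonal.

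Next I would identify each piece. For $v\in\Pi_{\C}$ we have $|PJv|=|v|=|Jv|$, so $Jv\in\Pin$. Hence $\Pi_{\C}$ is $J$-invariant and $P^\perp$ vanishes on it. For $v\in E$ we have $PJv=0$, so $JE\subset\Pin^\perp$. Then $E\oplus JE$ is $J$-invariant, $P^\perp$ vanishes on $E$ and is the identity on $JE$. For $W_i=\Pi_i+J\Pi_i$, I would show it is $4$-dimensional: $\Pi_i\cap J\Pi_i=\{0\}$ because $|PJv|<|v|$ on $\Pi_i$. It is $J$-invariant by construction, and $P(J\Pi_i)=Q\Pi_i\subset\Pi_i$, which is the claim $Q:W_i\to\Pi_i$. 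Consequently $P$ maps $W_i$ into $\Pi_i\subset W_i$, so $P^\perp=I-P$ maps $W_i$ to itself.

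The main obstacle is proving mutual orthogonality, especially $J\Pi_i\perp J\Pi_j$ and $\Pi_i\perp J\Pi_j$ for $i\neq j$, along with orthogonality against $\Pi_{\C}$ and $E\oplus JE$. For this I would use that $J$ is orthogonal and $J^T=-J$. First, $\langle Jx,Jy\rangle=\langle x,y\rangle$ reduces $J\Pi_i\perp J\Pi_j$ to $\Pi_i\perp\Pi_j$. Second, for $x\in\Pi_i$ and $y\in\Pi_j$ we have $\langle x,Jy\rangle=\langle x,PJy\rangle=\langle x,Qy\rangle$, and $Qy\in\Pi_j\perp\Pi_i$, so this vanishes. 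The same computation handles all the mixed pairs among $E$, $\Pi_{\C}$ and the $\Pi_i$, because each of these spaces is $Q$-invariant.

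Finally I would set $W$ to be the orthogonal complement of $E\oplus JE\oplus\Pi_{\C}\oplus\bigoplus_i W_i$. This sum equals $\Pin+J\Pin$, so $W$ is exactly the space perpendicular to both $\Pin$ and $J\Pin$. Since $\Pin+J\Pin$ is $J$-invariant and $J$ is orthogonal, $W$ is $J$-invariant. Because $W\subset\Pin^\perp$, $P^\perp$ is the identity on $W$.
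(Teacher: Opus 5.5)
Your proposal is correct and follows essentially the paper's route: decompose $\Pi$ into eigenspaces of $-Q^2$ using bases of the form $v, Qv$, and get $W_i\perp W_j$ from $\langle x,Jy\rangle=\langle x,Qy\rangle$ together with the $Q$-invariance of the pieces. Your version is slightly more direct, since you use $Qy\in\Pi_j\perp\Pi_i$ rather than the paper's ratio-of-cosines computation. It also checks details the paper leaves implicit: $\dim W_i=4$, orthogonality against $E\oplus JE$ and $\Pi_{\C}$, and $W=(\Pi+J\Pi)^\perp$.
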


\begin{proof}
   To get the decomposition, the only things to check are that $Q: W_i \to \Pi_i$ and $W_i \perp W_j$ for $i \ne j$. The first part of this follows since $v$ and $Q\, v$ 
   are orthogonal eigenvectors with the same eigenvalue.  The second part
    follows if we can show that if $v_i \in \Pi_i$ and $v_j \in \Pi_j$ with $i\ne j$, then
\begin{align}	\label{e:fcase}
	\langle v_i , v_j \rangle = 0 = \langle v_i , J \, v_j \rangle  \, .
\end{align}
There are two cases.  First, suppose that $\theta_i \ne \theta_j < \frac{\pi}{2}$. The first claim in (\ref{e:fcase})
 is immediate since eigenvectors for different eigenvalues of $Q^2$ must be orthogonal.  For the second claim, we observe that
\begin{align}
	  \langle v_i , J\, v_j \rangle &=  \langle   v_i , P \, J\, v_j \rangle = \frac{-1}{\cos^2 \theta_i} \,  \langle  Q^2 \,  v_i , Q \, v_j \rangle
	  =  \frac{1}{\cos^2 \theta_i} \,  \langle  Q \,  v_i , Q^2 \, v_j \rangle =  \frac{-\cos^2 \, \theta_j}{\cos^2 \theta_i} \,  \langle  Q \,  v_i , v_j \rangle
	  \notag \\
	  &=   \frac{\cos^2 \, \theta_j}{\cos^2 \theta_i} \,   \langle v_i , J\, v_j \rangle \, , 
\end{align}
giving the second claim in this case.   Finally, suppose that $0 < \theta_{i_1} = \dots \theta_{i_m} < \frac{\pi}{2}$ so that $Q^2$ has a $2m$-dimensional block where it is a (non-zero) constant times the identity.  This time, we need to choose the basis more carefully.  We first choose 
any unit eigenvector $v_{i_1}$ for $Q^2$ and let $\Pi_{i_1}$ be the span of $v_{i_1}$ and $Q \, v_{i_1}$. Now choose $v_{i_2}$ that is perpendicular to  $\Pi_{i_1}$ and observe that so is $Q\, v_{i_2}$ since
\begin{align}
	\langle Q \, v_{i_2} , v_{i_1} \rangle & = - 	\langle v_{i_2} , Q \, v_{i_1} \rangle =  0 \, , \\
	\langle Q \, v_{i_2} , Q \, v_{i_1} \rangle & = 	- \langle v_{i_2} , Q^2 \, v_{i_1} \rangle =  \cos^2 \theta_{i_1} \, 	\langle v_{i_2} , v_{i_1} \rangle = 0 \, .
\end{align}
Now continue to get the desired decomposition. 

The properties of $J$ on the spaces follows immediately from the construction. 
Finally, we establish the claimed properties of $P^{\perp}$. First, since $W$ and $J\, E$ are orthogonal to $\Pin$, $P^{\perp}$ is the identity on both of these.  Both $E$ and $\Pi_{\C}$ are contained in $\Pin$, to $P^{\perp}$ vanishes on these.  Using that $Q$ maps $W_i$ to $\Pi_i$, it follows that $P(W_j) \subset W_j$ and thus
 $P^{\perp}(W_j) \subset W_j$.   
\end{proof}

\begin{lemma}	\label{l:Ldecomp}
If $L$ is a $2q$ dimensional subspace that is preserved by $J$, then we can choose an orthonormal basis of the form
$v_1 , J \, v_1 , \dots , v_q, J \, v_q$ so that
\begin{itemize}
\item $v_i$ has an orthogonal decomposition $v_i = v_i^W + v_i^{\Pi_{\C}} + v_i^E + v_i^{JE} + \sum_{j} v_i^{W_j}$ given by (\ref{e:bigde}).
\end{itemize}
  \end{lemma}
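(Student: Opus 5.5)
The plan is to treat this as two separate pieces of linear algebra. First, I will produce a unitary basis of $L$ by a complex Gram--Schmidt process. Second, I will read off the required decomposition from the orthogonal splitting \eqref{e:bigde} of Lemma~\ref{l:spaces}. No analytic input is needed.

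\emph{Step 1: the basis.} The form $h(x,y)=\langle x,y\rangle+i\,\langle x,Jy\rangle$ is a Hermitian inner product on $\C^N=\R^{2N}$, because $J$ is orthogonal with $J^2=-1$, so $\langle x,Jx\rangle=0$. Its restriction to the $J$-invariant space $L$ makes $L$ a $q$-dimensional complex inner product space. I will pick any unit $v_1\in L$. Then $\langle v_1,Jv_1\rangle=0$ and $|Jv_1|=|v_1|=1$, so $\{v_1,Jv_1\}$ is orthonormal. Its real span $L_1=\mathrm{span}\{v_1,Jv_1\}$ is $J$-invariant. Because $J$ is orthogonal and preserves $L_1$, it also preserves the real orthogonal complement $L\cap L_1^\perp$. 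That complement is a $J$-invariant space of real dimension $2q-2$. Inducting on $q$ then gives an orthonormal basis $v_1,Jv_1,\dots,v_q,Jv_q$ of $L$.

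\emph{Step 2: the decomposition.} Lemma~\ref{l:spaces} states that $\C^N$ is the orthogonal direct sum of $W$, $\Pi_{\C}$, $E$, $JE$ and the $W_j$. Hence each $v_i$ splits uniquely as $v_i=v_i^W+v_i^{\Pi_{\C}}+v_i^E+v_i^{JE}+\sum_j v_i^{W_j}$, with mutually orthogonal summands obtained by the orthogonal projections onto these subspaces. I will also record the decomposition of $Jv_i$, since that is what later applications to $\mu_\Pin(L)$ will use. By Lemma~\ref{l:spaces}, $J$ preserves $W$, $\Pi_{\C}$ and each $W_j$. It maps $E$ onto $JE$ and $JE$ onto $E$, using $J^2=-1$. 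Therefore the components of $Jv_i$ are
\[
(Jv_i)^W=Jv_i^W,\quad (Jv_i)^{\Pi_{\C}}=Jv_i^{\Pi_{\C}},\quad (Jv_i)^{JE}=Jv_i^E,\quad (Jv_i)^{E}=Jv_i^{JE},\quad (Jv_i)^{W_j}=Jv_i^{W_j}.
\]

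The only point requiring any care is that $J$ preserves the orthogonal complement inside $L$ in Step 1. This follows from orthogonality of $J$ together with $J^{-1}=-J$. I do not expect a genuine obstacle: the content of the lemma lies in Lemma~\ref{l:spaces}, which is taken as given.
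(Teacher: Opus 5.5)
Your proposal is correct and follows essentially the same route as the paper. The paper also chooses unit vectors $v_i\in L$ inductively, each orthogonal to the previously chosen $v_j, Jv_j$; your observation that $J$ preserves $L\cap L_1^\perp$ is exactly its check that $\langle Jv_2,v_1\rangle=-\langle v_2,Jv_1\rangle=0$ and $\langle Jv_2,Jv_1\rangle=\langle v_2,v_1\rangle=0$. It then reads off the decomposition from the orthogonal sum \eqref{e:bigde} of Lemma~\ref{l:spaces}, as you do. Your extra bookkeeping for the components of $Jv_i$, with $E$ and $JE$ swapped, is also correct, and it is precisely what the paper uses later in proving part (i) of Theorem~\ref{t:higher-dimensional}.
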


\begin{proof}
We construct the basis inductively.  First choose any unit vector $v_1 \in L$.  Since $L$ is complex, $J\,v_1$ is also in $L$; since $J$ is skew
and orthogonal, $J\, v_1$ is a unit vector perpendicular to $v_1$.  Next, choose  $v_2 \in L$ to be perpendicular to both $v_1$ and $J\, v_1$.
As before $J\, v_2$ is another unit vector in $L$ that is orthogonal to $v_2$. We must verify that $J\, v_2$ is also orthogonal to $v_1$ and $J\, v_1$. Both claims follow from the properties of $J$ 
\begin{align}
	\langle J \, v_2 , v_1 \rangle = - \langle v_2 , J \, v_1 \rangle = 0 \, , \\
	\langle J \, v_2 , J \, v_1 \rangle = \langle v_2 , v_1 \rangle = 0 \, .
\end{align}
Proceeding similarly gives the desired basis.  The orthogonal decomposition follows immediately from Lemma \ref{l:spaces}.
\end{proof}

The following is the analogue of Corollary \ref{c:lines}, with $\theta$ replaced by
$\theta_{q}$.  

\begin{lemma}\label{lem:Lplus}
There are $J$-invariant subspaces $L_{\pm}\subset\C^{N}$ with
\[
   \C^{N}=L_{-}\oplus L_{+}\ \ \text{orthogonally},\qquad
   \dim_{\C}L_{-}=q,\qquad \dim_{\C}L_{+}=N-q,
\]
such that
\begin{align}
   |\Pbot x|^{2}&\ \ge\ \cos^{2}\tfrac{\theta_{q}}{2}\,|x|^{2}
   \qquad\text{for all } x\in L_{+},\label{eq:Lplus}\\
   |\Pbot x|^{2}&\ \le\ \sin^{2}\tfrac{\theta_{q}}{2}\,|x|^{2}
   \qquad\text{for all } x\in L_{-}.\label{eq:Lminus}
\end{align}
Consequently
\begin{enumerate}
\item[(a)] if $\varepsilon<\cot\frac{\theta_{q}}{2}$, then
$L_{+}\cap \Cone{\varepsilon}=\{0\}$;
\item[(b)] if $\varepsilon\ge\tan\frac{\theta_{q}}{2}$, then
$L_{-}\subset \Cone{\varepsilon}$, so the complex $q$-plane $L_{-}$ shows that the
maps in Theorem \ref{t:higher-dimensional}(ii) exist.
\end{enumerate}
\end{lemma}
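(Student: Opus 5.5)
The plan is to build $L_\pm$ block by block from the orthogonal decomposition \eqref{e:bigde} of Lemma~\ref{l:spaces}, applying the four-dimensional result Lemma~\ref{l:mu} in each block, and then to add up the pointwise bounds.

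\textbf{Why the bounds add.} The point is that $P^\perp$ maps each summand of \eqref{e:bigde} into itself and the summands are mutually orthogonal. So if $x=\sum x_\alpha$ is decomposed along a $J$-invariant refinement of \eqref{e:bigde}, then $|P^\perp x|^2=\sum_\alpha|P^\perp x_\alpha|^2$. It therefore suffices to choose, inside each $J$-invariant summand, an orthogonal splitting into a ``minus'' part and a ``plus'' part satisfying \eqref{eq:Lminus} and \eqref{eq:Lplus} respectively. We then let $L_\mp$ be the orthogonal sums of these parts.

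\textbf{The blocks $W_i$ with $0<\theta_i<\pi/2$.} First I will check that $\Pin\cap W_i=\Pi_i$. This holds because $\Pin=\Pi_{\C}\oplus E\oplus\bigoplus_i\Pi_i$, and these pieces are orthogonal to $W_i$ except for $\Pi_i$. Hence $P|_{W_i}$ is the orthogonal projection of $W_i\cong\C^2$ onto the two-plane $\Pi_i$. Next I will compute the Kähler angle of $\Pi_i$ inside $W_i$. Take a unit $v\in\Pi_i$, so $Qv/|Qv|$ is a unit vector in $\Pi_i$ orthogonal to $v$. Then
\[
\omega\bigl(v,Qv/|Qv|\bigr)=\langle Jv,Qv\rangle/|Qv|=\langle PJv,Qv\rangle/|Qv|=|Qv|=\cos\theta_i .
\]
So Lemma~\ref{l:mu}(i) gives orthogonal complex lines $L^i_\pm\subset W_i$ with $|P^\perp x|^2=\sin^2\frac{\theta_i}{2}|x|^2$ on $L^i_-$ and $|P^\perp x|^2=\cos^2\frac{\theta_i}{2}|x|^2$ on $L^i_+$. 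By monotonicity in $\theta_i\le\theta_q$, these satisfy \eqref{eq:Lminus} and \eqref{eq:Lplus}.

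\textbf{The remaining summands.} On $\Pi_{\C}$ we have $P^\perp=0$, so we put $\Pi_{\C}$ into $L_-$. On $W$ we have $P^\perp=\mathrm{id}$, so we put $W$ into $L_+$.

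For $E\oplus JE$, the eigenvalue $0$ of $-Q^2$ has even multiplicity, so $\dim_\R E$ is even. I choose an orthonormal basis $e_1,e_2,\dots$ of $E$ and group it into pairs. Each $\mathrm{span}\{e_1,e_2,Je_1,Je_2\}$ is $J$-invariant and $P^\perp$-invariant, since $P^\perp$ vanishes on $E$ and is the identity on $JE$. In it, $\mathrm{span}\{e_1,e_2\}$ is a Lagrangian plane, so Lemma~\ref{l:mu} with $\theta=\pi/2$ gives two orthogonal complex lines on which $|P^\perp x|^2=\frac12|x|^2$. Since $\sin^2\frac{\theta_q}{2}\ge\frac12\ge\cos^2\frac{\theta_q}{2}$ when $\theta_q=\pi/2$, which is the case whenever $E\neq0$, one line can go into $L_-$ and the other into $L_+$.

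\textbf{Dimension count.} $L_-$ receives $\dim_{\C}\Pi_{\C}$ from $\Pi_{\C}$, one complex line per $W_i$, and one per pair in $E$. This totals $\frac12\dim_\R\Pin=q$. Then $L_+$ is the orthogonal complement, of complex dimension $N-q$.

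\textbf{Consequences (a) and (b).} These follow exactly as in Corollary~\ref{c:lines}: a unit vector $x$ lies in $C_\varepsilon(\Pin)$ iff $|P^\perp x|^2\le\varepsilon^2/(1+\varepsilon^2)$. We also have $\cos^2\frac{\theta_q}{2}>\varepsilon^2/(1+\varepsilon^2)\iff\varepsilon<\cot\frac{\theta_q}{2}$ and $\sin^2\frac{\theta_q}{2}\le\varepsilon^2/(1+\varepsilon^2)\iff\varepsilon\ge\tan\frac{\theta_q}{2}$. For (b), the inclusion of the linear complex $q$-plane $L_-$ is a proper holomorphic immersion into the cone.

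\textbf{Main obstacle.} The step needing care is the block reduction: verifying that $P$ restricted to $W_i$ really is projection onto $\Pi_i$, and that the Kähler angle of $\Pi_i$ in $W_i$ is $\theta_i$. Only then does Lemma~\ref{l:mu} transfer verbatim. The invariance of each summand under both $J$ and $P^\perp$, supplied by Lemma~\ref{l:spaces}, is what makes the cross terms vanish.
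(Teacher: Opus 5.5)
Your proposal is correct and follows the paper's proof. Both build $L_\pm$ blockwise from the decomposition of Lemma~\ref{l:spaces}: $\Pi_{\C}$ goes into $L_-$, $W$ into $L_+$, and Lemma~\ref{l:mu} splits each $W_i$ and $E\oplus JE$. Your write-up adds details the paper leaves implicit: that $P|_{W_i}$ is projection onto $\Pi_i$ with K\"ahler angle $\theta_i$, the pairing of a basis of $E$ (using that $\dim E$ is even), the dimension count, and the derivation of (a) and (b).
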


\begin{proof}
We construct $L_{\pm}$ blockwise using Lemma \ref{l:spaces}.  The space $E$ corresponds to $\theta =\frac{\pi}{2}$ and $E \oplus JE$ contains complex subspaces $L^E_+$ and $L^E_-$ just like the $W_j$'s.  We then define 
\begin{align}
	L_- = \Pi_{\C} \oplus L^E_- \oplus_{j} L^j_- 
	{\text{ and }} L_+ = W \oplus L^E_+ \oplus_{j} L^j_+ \, .
\end{align}
  Lemma \ref{l:spaces} gives the desired properties.
\end{proof}

Let $\pi_-$ and $\pi_+$ denote the orthogonal projections of $\C^{N}$ onto
$L_{-}$ and $L_{+}$, respectively.  Since $L_{\pm}$ are $J$-invariant, $\pi_{\pm}$ are
complex linear, and
\begin{equation}\label{eq:splitnorm}
   |z|^{2}=|\pi_- z|^{2}+|\pi_+ z|^{2}\qquad (z\in\C^{N}).
\end{equation}
We identify $L_{-}\cong\C^{q}$ once and for all.  The next lemma is the analogue
of Lemma \ref{l:zminus}; $\pi_-$ plays the role of the coordinate $z_{-}$.

\begin{lemma}\label{lem:pi}
Suppose $\tan\frac{\theta_{q}}{2}\le \varepsilon<\varepsilon'<\cot\frac{\theta_{q}}{2}$.
Then
\begin{enumerate}
\item There is $c=c(\varepsilon',\theta_{q})\in(0,1]$ so that
\begin{equation}\label{eq:lower}
   c\,|z|\le |\pi_-(z)|\quad\text{and hence}\quad
   |\pi_+(z)|\le c^{-1}|\pi_- (z)|
   \qquad\text{for all } z\in \Cone{\varepsilon'} .
\end{equation}
\item Given $K<\infty$ there is $R_{0}=R_{0}(\varepsilon,\varepsilon',K)$ so that
$\Slab{\varepsilon}{K}\setminus B_{R_{0}}\subset \Cone{\varepsilon'}$.
\end{enumerate}
\end{lemma}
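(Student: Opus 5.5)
The plan is to imitate the proof of Lemma \ref{l:zminus}, with Lemma \ref{lem:Lplus}(a) replacing Corollary \ref{c:lines}(iii).

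\emph{Part (1).} Since $\varepsilon'<\cot\frac{\theta_q}{2}$, Lemma \ref{lem:Lplus}(a) gives $L_+\cap \Cone{\varepsilon'}=\{0\}$. The map $\pi_-$ is complex linear and its kernel is exactly $L_+$. Hence the continuous function $z\mapsto|\pi_-(z)|$ has no zero on the compact set $\Cone{\varepsilon'}\cap\partial B_1$; this set is closed, because $\Cone{\varepsilon'}$ is defined by a non-strict inequality. So $|\pi_-|$ attains a positive minimum $c$ there, and $c\le1$ because $\pi_-$ is an orthogonal projection. The cone is invariant under dilations, and homogeneity of $\pi_-$ then gives $c|z|\le|\pi_-(z)|$ for all $z\in\Cone{\varepsilon'}$. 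For the second inequality in \eqref{eq:lower}, combine this with \eqref{eq:splitnorm}:
\[
|\pi_+(z)|\le|z|\le c^{-1}|\pi_-(z)|.
\]
The constant depends only on $\varepsilon'$ and the plane $\Pin$. To get dependence on $\theta_q$ alone, one can use the explicit bound \eqref{eq:Lplus}, but this is not needed for the application.

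\emph{Part (2).} This uses only the elementary argument of Lemma \ref{l:zminus}(2), which never used the dimension. Take $x\in\Slab{\varepsilon}{K}$ with $|x|\ge R_0$. From
\[
|\Pbot x|^2\le(\varepsilon|Px|+K)^2\le2\varepsilon^2|Px|^2+2K^2,
\]
we get $R_0^2\le(1+2\varepsilon^2)|Px|^2+2K^2$, so $|Px|\to\infty$ uniformly as $R_0\to\infty$. Then
\[
|\Pbot x|\le\varepsilon|Px|+K\le\varepsilon'|Px|
\]
as soon as $|Px|\ge K/(\varepsilon'-\varepsilon)$. Choosing $R_0$ with $(R_0^2-2K^2)/(1+2\varepsilon^2)\ge K^2/(\varepsilon'-\varepsilon)^2$ therefore suffices.

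No serious obstacle is expected. The only point needing care is closedness of the cone in part (1), which is what makes the compactness argument work.
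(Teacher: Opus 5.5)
Your proof is correct and is essentially the paper's argument: the paper only says that both claims follow as in Lemma \ref{l:zminus}, with Lemma \ref{lem:Lplus} used for the first, and that is the compactness-plus-homogeneity argument and the elementary slab estimate you carry out. Your side remark is also apt: the compactness constant a priori depends on $\Pin$, and writing $z=\pi_-z+\pi_+z$ and applying \eqref{eq:Lplus} with the triangle inequality is what actually makes $c$ depend only on $\varepsilon'$ and $\theta_q$.
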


\begin{proof}
Both claims follow as in Lemma \ref{l:zminus} (using
 Lemma \ref{lem:Lplus} in the first).
\end{proof}

\subsection{The proof of (i) and (iii) in Theorem \ref{t:higher-dimensional}}

Arguing as in Lemma \ref{l:minimal-bochner}, 
if $\Sigma$ is minimal and $2\,q$-dimensional, then 
$\rho_{\kappa} (x) = |P\, x|^2 - \kappa \, |P^{\perp} \, x|^2$  satisfies
\begin{align}	\label{e:deltarhok}
	\Delta_{\Sigma} \, \rho_{\kappa} = 4\, q \, \left( 1 - (1+\kappa) \, E_{\Pin} (T\Sigma) \right) \, , 
\end{align}
 where the tilt-excess $ E_{\Pin} (T\Sigma)$ is    the trace of the mapping from $ v \in T\Sigma$ to 
 $\frac{1}{2\,q} \, \left| P^{\perp} \, v \right|^2$.

\begin{proof}[Proof of (i) and (iii) in Theorem \ref{t:higher-dimensional}]
\vskip1mm
{\bf{Part (i)}}: We claim that for each point in the image, we have the lower bound
\begin{align}	\label{e:claimi}
	E_{\Pin} (T\Sigma) \geq \sin^2  \frac{\theta_1}{2} \, .
\end{align}
Since $L = T\Sigma$ is a $2q$-plane that is preserved by $J$ (the map is holomorphic), Lemma 
\ref{l:Ldecomp} gives 
  an orthonormal basis of the form
$v_1 , J \, v_1 , \dots , v_q, J \, v_q$ so that each $v_i$ has the orthogonal decomposition
\begin{align}
	v_i &= v_i^W + v_i^E + v_i^{JE} + \sum_{j} v_i^{W_j}
	  \, ,
\end{align}
where we have that $ v_i^{\Pi_{\C}} =0$ since $\theta_1 > 0$. 
This gives $P^{\perp}$ on $v_i$ and $J\, v_i$ as orthogonal sums 
\begin{align}
	P^{\perp} \, v_i &= v_i^W   + v_i^{JE} + \sum_{j} P^{\perp} \, v_i^{W_j} \, , \\
	P^{\perp} \, (J\, v_i) &= J \, v_i^W   + J \, v_i^{E} + \sum_{j} P^{\perp} \, (J \, v_i^{W_j}) \, .
\end{align}
Squaring these and summing gives that
\begin{align}
	\left| 	P^{\perp} \, v_i \right|^2 + \left| 	P^{\perp} \, J\,  v_i \right|^2 &= 2\, \left|  v_i^W    \right|^2 + \left|  v_i^{E} \right|^2 
	+ \left|  v_i^{JE} \right|^2  + \sum_{j} \left( \left| P^{\perp} \, v_i^{W_j}\right|^2 +  \left| P^{\perp} \, (J \, v_i^{W_j}) \right|^2 \right) \notag \\
	&\geq
	2\, \left|  v_i^W    \right|^2 + \left|  v_i^{E} \right|^2 
	+ \left|  v_i^{JE} \right|^2  + 2\, \sin^2 \frac{\theta_1}{2} \, 
	 \sum_{j}   \left|   v_i^{W_j}\right|^2 	\geq 2\, \sin^2 \frac{\theta_1}{2} \, , 
\end{align}
where the second to last inequality used Lemma \ref{l:mu} applied to each $W_j$ (this applies  since $J$   and 
$P^{\perp}$ preserve $W_j$).  This gives (\ref{e:claimi}).  Summing over $i$ and combining this with (\ref{e:deltarhok}) gives
\begin{align}	\label{e:deltarhok2}
	\Delta_{\Sigma} \, \rho_{\kappa} = 4\, q \, \left( 1 - (1+\kappa) \, E_{\Pin} (T\Sigma) \right)
	\leq 2\, q \, \left( 1 -  (1+\kappa) \, \sin^2 \frac{\theta_1}{2} \right) 
	 \, . 
\end{align}
If we choose $\varepsilon_0 \in (\varepsilon, \tan \frac{\theta_1}{2})$ and set $\rho =\rho_{\varepsilon_0^{-2}}$, then it follows that
$\Delta_{\Sigma} \, \rho \leq 0$.   Claim (i) now follows from the maximum principle as in the proof of Proposition \ref{p:tangent-obstruction}.

\vskip1mm
{\bf{Part (iii)}}:
The hypothesis of \textup{(iii)} implies that $\theta_1>0$, so  $\bar{W}=\Pin+J\Pin=W_1\oplus\cdots\oplus W_q$ is a complex $2q$-plane in $\C^N$, where $W_j$ corresponds to $\theta_j$ and thus is either one of the $W_j$'s from 
Lemma \ref{l:spaces} (when $\theta_j < \frac{\pi}{2}$) or part of $E \oplus JE$ (when $\theta_j = \frac{\pi}{2}$).
 Since
$\theta_1\leq\theta_j$, we have
\[
 \varepsilon>\cot\frac{\theta_1}{2}\geq\cot\frac{\theta_j}{2}
 \qquad (1\leq j\leq q).
\]
Part (iii) in Theorem \ref{t:angleversion}, applied in $W_j\cong\C^2$, gives a complete proper holomorphic
immersion $G_j:\D\to W_j$ with image in $C_\varepsilon(\Pin_j)$ and
\[
 \frac{A_{G_j}(R)}{R^2\log R}\longrightarrow\infty.
\]
Define
\[
 G(\zeta_1,\ldots,\zeta_q)
   =G_1(\zeta_1)+\cdots+G_q(\zeta_q).
\]
The orthogonal decompositions of $\Pin$ and $\Pin^\perp$ show that
$G(\D^q)\subset C_\varepsilon(\Pin)$.  The map is proper as a map into the closed complex
subspace $\bar{W}\subset\C^N$, and its product metric is complete.  Moreover, the volume of  $G^{-1}(B_R)$
in the induced (product) metric is at least
\[
  \prod_{j=1}^q A_{G_j}\!\left(\frac{R}{\sqrt q}\right),
\]
which gives the asserted volume growth.   
\end{proof}

\subsection{The proof of  (ii) in Theorem \ref{t:higher-dimensional}}

The argument is modeled on the case of curves with some differences in higher dimensions.


From now on we fix $\varepsilon<\cot\frac{\theta_{q}}{2}$ and $K<\infty$, choose
$\varepsilon'$ with $\max\{\varepsilon,\tan\frac{\theta_{q}}{2}\} < \varepsilon'
<\cot\frac{\theta_{q}}{2}$ (replacing $\varepsilon$ by
$\tan\frac{\theta_{q}}{2}$ if $\varepsilon<\tan\frac{\theta_{q}}{2}$, which only
enlarges the cone), and let $c$ and $R_{0}$ be given by Lemma \ref{lem:pi}.  Define holomorphic maps
\begin{equation}\label{eq:hg}
   h=\pi_- \circ F:\Sigma \to\C^{q},\qquad g=\pi_+\circ F:\Sigma \to L_{+} \, .
\end{equation}

\begin{corollary}\label{cor:proper}
If $F(\Sigma)\subset\Slab{\varepsilon}{K}$, then
\begin{equation}\label{eq:sandwich}
   |h|\ \le\ |F|\ \le\ c^{-1}|h|+R_{0}\qquad\text{on } \Sigma .
\end{equation}
In particular 
$h:\Sigma\to\C^{q}$ is a proper holomorphic map. Moreover, $h$ is a finite branched covering with a
well defined degree $D<\infty$.
\end{corollary}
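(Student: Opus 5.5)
The sandwich \eqref{eq:sandwich} should follow directly from Lemma \ref{lem:pi}. The first inequality $|h|\le|F|$ comes from \eqref{eq:splitnorm}, since $|\pi_- F|^2\le|\pi_-F|^2+|\pi_+F|^2=|F|^2$. For the second, I would split $\Sigma$ according to whether $|F|<R_0$ or $|F|\ge R_0$. On the first set the bound $|F|\le R_0$ is trivial. On the second, Lemma \ref{lem:pi}(2) places $F(p)$ in $\Cone{\varepsilon'}$, and then Lemma \ref{lem:pi}(1) gives $c|F|\le|\pi_-F|=|h|$. In both cases $|F|\le c^{-1}|h|+R_0$.

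Properness of $h$ follows next. $h$ is holomorphic, being the composition of the complex-linear map $\pi_-$ with $F$. Given a compact $B\subset\C^q$ with $B\subset\{|w|\le r\}$, the sandwich gives $h^{-1}(B)\subset F^{-1}(\overline{B}_{c^{-1}r+R_0})$. The latter set is compact because $F$ is proper, and $h^{-1}(B)$ is closed, so it is compact.

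For the covering statement I would argue as follows. Since $\dim_\C\Sigma=q=\dim_\C\C^q$ and $h$ is proper, the fibers $h^{-1}(w)$ are compact analytic subsets of $\Sigma$. A compact analytic subset of a Stein-like setting need not be finite in general, so I would argue via $F$. The fiber $h^{-1}(w)$ maps under $F$ into the compact analytic set $F(\Sigma)\cap\pi_-^{-1}(w)$. This set lies in an affine copy of $L_+$, where the coordinate functions are holomorphic. By the maximum principle, a compact analytic subset of $\C^N$ is finite. Since $F$ is an immersion and proper, its fibers are discrete and compact, hence finite, so $h^{-1}(w)$ is finite. A proper holomorphic map between equidimensional connected complex manifolds with finite fibers is then open and surjective. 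It is a finite branched covering, unbranched off the proper analytic subset $h(\{\det dh=0\})$, which is analytic by Remmert's proper mapping theorem \cite{Dem}. Its complement in $\C^q$ is connected, since it is the complement of a proper analytic set, so the number of preimages is constant there. That constant is the degree $D<\infty$.

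The main obstacle is the finiteness of fibers and the existence of the degree. I would handle this with the maximum-principle argument above, together with Remmert's theorem to ensure that the branch locus has connected complement. Some care is needed to check that $\det dh\not\equiv0$. This holds because the fibers are finite, so $h$ has generic rank $q$.
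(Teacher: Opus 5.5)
Your proof is correct and follows the paper's argument. The sandwich comes from Lemma~\ref{lem:pi} by splitting according to whether $|F|\ge R_0$, properness of $h$ follows from the sandwich, and finiteness of fibers comes from compact analytic subsets of $\C^N$ being finite together with $F$ being a proper immersion; the only difference is that where the paper cites the Grauert--Remmert covering lemma, you sketch its proof (openness, Remmert's theorem for the image of $\{\det dh=0\}$, and connectedness of the complement of a proper analytic set), which is fine.
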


\begin{proof}
The left inequality in \eqref{eq:sandwich} is immediate from \eqref{eq:splitnorm}.   
 If $|F(p)|\ge R_{0}$, then $F(p)\in \Cone{\varepsilon'}$ by Lemma
\ref{lem:pi}(2) and hence $|F(p)|\le c^{-1}|h(p)|$ by \eqref{eq:lower}; otherwise
$|F(p)|<R_{0}$.  This gives the second inequality, also implying properness of $h$.

The  image of $F$ that projects to a fixed point $w$  in $L_-$ is discrete because it is an analytic subset of 
  $(w+L_{+}) \subset  \Cone{\varepsilon'}\cup
B_{R_{0}}$ and a compact analytic
subset of a complex linear space must be finite.   Since $F$ is a proper immersion, it follows
that $h$ has discrete fibers.
Finally, 
a map between
equidimensional complex manifolds that is proper and has discrete fibers is a
finite branched covering of constant degree by the covering lemma in chapter $7$ of \cite{GR}. (If $F$ has
generic rank $q$ but is not injective on a divisor, $D$ is the generic fiber
count, which is all that is used below.)
\end{proof}

We use $d=\partial+\bar\partial$ and $d^{c}=\frac{i}{2}(\bar\partial-\partial)$, so that
$\ddc=i\,\partial\bar\partial$ and $du\wedge d^{c}u=i\,\partial u\wedge\bar\partial u$
for real $u$.  We write $\omega_{2q}=\pi^{q}/q!$ for the volume of the unit ball in
$\R^{2q}$, $\{|h|<r\}$ for $\{p\in\Sigma:|h(p)|<r\}$, and
$V_F(R)=\Vol\bigl(F^{-1}(B_R)\bigr)$; volumes on $\Sigma$ are computed in the induced
metric and hence counted with multiplicity.

Introduce the semipositive $(1,1)$-forms on $\C^{N}$ and their pullbacks,
\begin{equation}\label{eq:forms}
   \alpha=\tfrac12 \ddc |\pi_- z|^{2},\qquad
   \gamma=\tfrac12\ddc|\pi_+ z|^{2},\qquad \beta=\alpha+\gamma=\tfrac12\ddc|z|^{2},
\end{equation}
\[
   \alpha_{F}=F^{*}\alpha=\tfrac12\ddc |h|^{2},\qquad
   \gamma_{F}=F^{*}\gamma=\tfrac12\ddc|g|^{2},\qquad
   \beta_{F}=\alpha_{F}+\gamma_{F} .
\]
Here $\beta$ is the standard K\"ahler form of $\C^{N}$, and $\alpha$ restricts to the
standard K\"ahler form of $L_-\cong\C^q$.  By Wirtinger's theorem \cite{W}, for every
Borel set $U\subset \Sigma$,
\begin{equation}\label{eq:wirtinger}
   \Vol (U)=\frac{1}{q!}\int_{U}\beta_{F}^{\,q}.
\end{equation}
Pullbacks of semipositive $(1,1)$-forms under holomorphic maps are semipositive, and
wedge products of semipositive $(1,1)$-forms are positive.  Hence
$\alpha_{F}^{\,k}\wedge\gamma_{F}^{\,m}\ge 0$ as a measure whenever $k+m=q$, and in
particular $\alpha_F^{\,q}\le\beta_F^{\,q}$, since
$\beta_F^{\,q}=\sum_{m=0}^{q}\binom{q}{m}\alpha_F^{\,q-m}\wedge\gamma_F^{\,m}$.

The substitute for the constancy \eqref{e:e36} of the flux of $\log|h|$ is the following
degree identity.  By Corollary \ref{cor:proper}, $h$ is a proper holomorphic map of
degree $D$ onto $\C^{q}$, and $\alpha_F^{\,q}/q!$ is the pullback under $h$ of the
Euclidean volume form of $L_-\cong\C^q$.  The area formula therefore gives, for every
$r>0$,
\begin{equation}\label{eq:degree}
   \frac{1}{q!}\int_{\{|h|<r\}}\alpha_{F}^{\,q}
   = D\,\omega_{2q}\,r^{2q}.
\end{equation}
   If $R_1\ge R_0$, then \eqref{eq:sandwich} and $c\le1$ give
$|F|<c^{-1}R_1+R_0\le 2 \, c^{-1} \, R_1$ on $\{|h|<R_1\}$, so
$\{|h|<R_1\}\subset F^{-1}(B_{ 2 \, c^{-1} \, R_1})$.  Combining this with \eqref{eq:degree},
$\alpha_{F}^{\,q}\le \beta_{F}^{\,q}$ and \eqref{eq:wirtinger} bounds the degree by the
volume at a single scale:
\begin{equation}\label{eq:degreebound}
   D\ \le\ \frac{1}{\omega_{2q}R_{1}^{2q}}\,\Vol\{|h|<R_{1}\}
     \ \le\ \frac{1}{\omega_{2q}R_{1}^{2q}}\,V_{F}( 2 \, c^{-1} \, R_{1})
     \qquad\text{for } R_1\ge R_0 .
\end{equation}

Fix once and for all a cutoff $\chi\in C^{\infty}([0,\infty),[0,1])$ with
$\chi\equiv1$ on $[0,1]$ and $\chi\equiv 0$ on $[2,\infty)$, and set
\begin{equation}\label{eq:cutoff}
   \eta_{r}=\chi\!\left(\frac{|h|^{2}}{r^{2}}\right),\qquad
   C_{\chi}=2\|\chi'\|_{\infty}+4\|\chi''\|_{\infty} .
\end{equation}
Then $\eta_r=1$ on $\{|h|<r\}$, and $\eta_r$ is supported in
$\{|h|\le\sqrt2\,r\}$, which is compact by Corollary \ref{cor:proper}.  Since $\chi'$
and $\chi''$ vanish outside $(1,2)$, the forms $d\eta_r$ and $\ddc\eta_r$ vanish outside
\[
   U_r=\{r<|h|<\sqrt2\,r\}.
\]
The following lemma is the heart of the matter; it plays the role that
Lemma \ref{l:twofcts} played for $q=1$.

\begin{lemma}[Transfer of $\gamma_{F}$ into $\alpha_{F}$]\label{lem:transfer}
If $F(\Sigma)\subset\Slab{\varepsilon}{K}$, 
$k+m=q$ with $m\ge 1$ and $r\ge R_{0}$, then
\begin{equation}\label{eq:transfer}
   \int_{\{|h|<r\}}\alpha_{F}^{\,k}\wedge\gamma_{F}^{\,m}
   \ \le\  3C_\chi \,c^{-2}\int_{\{|h|<\sqrt2\,r\}}
           \alpha_{F}^{\,k+1}\wedge\gamma_{F}^{\,m-1}.
\end{equation}
\end{lemma}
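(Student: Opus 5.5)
The plan is to integrate by parts once, moving one factor of $\gamma_F=\tfrac12\ddc|g|^2$ onto the cutoff $\eta_r$. Then I use the cone bound $|g|\le c^{-1}|h|+R_0$ to control $|g|^2$ on $U_r$ by $r^2$. Finally I convert the resulting $d|h|^2$ terms back into $\alpha_F$. Write $T=\alpha_F^{\,k}\wedge\gamma_F^{\,m-1}$, which is a closed positive $(q-1,q-1)$-form. Since $\eta_r\ge0$ equals $1$ on $\{|h|<r\}$ and has compact support (Corollary \ref{cor:proper}),
\[
\int_{\{|h|<r\}}\alpha_F^{\,k}\wedge\gamma_F^{\,m}\le\int_\Sigma\eta_r\,\gamma_F\wedge T=\tfrac12\int_\Sigma \eta_r\,\ddc|g|^2\wedge T=\tfrac12\int_\Sigma |g|^2\,\ddc\eta_r\wedge T ,
\]
where the last equality is Stokes' theorem applied twice; this is legitimate because $T$ is closed and $\eta_r$ is compactly supported. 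The integrand on the right is supported in $U_r$.

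Next, compute $\ddc\eta_r$ with $u=|h|^2/r^2$:
\[
\ddc\eta_r=\chi'(u)\,\ddc u+\chi''(u)\,du\wedge d^cu,\qquad \ddc u=\tfrac{2}{r^2}\alpha_F,\qquad du\wedge d^cu=\tfrac{1}{r^4}\,i\,\partial|h|^2\wedge\bar\partial|h|^2 .
\]
By Cauchy--Schwarz applied to $\partial|h|^2=\sum_j\bar h_j\,\partial h_j$, we have $i\,\partial|h|^2\wedge\bar\partial|h|^2\le |h|^2\, i\sum_j\partial h_j\wedge\bar\partial h_j=2|h|^2\alpha_F$ as $(1,1)$-forms. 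On $U_r$ we have $|h|^2\le 2r^2$, so $du\wedge d^cu\le \tfrac{4}{r^2}\alpha_F$. Wedging with the positive form $T$ preserves these inequalities. The sign of $\chi',\chi''$ is not controlled, so I bound absolutely:
\[
\pm\,\ddc\eta_r\wedge T\le \tfrac{1}{r^2}\bigl(2\|\chi'\|_\infty+4\|\chi''\|_\infty\bigr)\alpha_F\wedge T\cdot\mathbf 1_{U_r}=\tfrac{C_\chi}{r^2}\,\mathbf 1_{U_r}\,\alpha_F^{\,k+1}\wedge\gamma_F^{\,m-1}.
\]
This absolute-value step needs care. A real $(1,1)$-form bounded by $\pm$ a semipositive form gives a signed measure, after wedging with $T$, whose absolute value is dominated. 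I will justify it by writing $\ddc\eta_r=\sigma_+-\sigma_-$ with $0\le\sigma_\pm\le \tfrac{C_\chi}{r^2}\alpha_F$: the $\chi'$ term is a multiple of $\alpha_F$, and the $\chi''$ term is a multiple of the semipositive form $du\wedge d^cu$. Each piece is then handled separately.

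Finally, bound $|g|^2$ on $U_r$. From Lemma \ref{lem:pi} and $r\ge R_0$: wherever $|F|\ge R_0$ we get $|g|\le c^{-1}|h|$, and otherwise $|g|<R_0\le r\le c^{-1}|h|$ since $c\le1$. Hence $|g|^2\le c^{-2}|h|^2\le 2c^{-2}r^2$ on $U_r$. Combining the three steps gives
\[
\int_{\{|h|<r\}}\alpha_F^{\,k}\wedge\gamma_F^{\,m}\le \tfrac12\cdot 2c^{-2}r^2\cdot\tfrac{C_\chi}{r^2}\int_{U_r}\alpha_F^{\,k+1}\wedge\gamma_F^{\,m-1},
\]
which is within the stated constant $3C_\chi c^{-2}$; the slack absorbs any lost factor, for instance if I use $|g|^2\le 2c^{-2}|h|^2+2R_0^2$. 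I expect the main obstacle to be the rigorous handling of the integration by parts and of the sign issue, namely the smoothness of $|g|^2$ and $\eta_r$ (both are smooth) and the positivity of the wedge of the split pieces with $T$. Everything else is a routine calculation.
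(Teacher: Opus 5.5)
Your proof is correct and follows the paper's argument step for step: you integrate by parts against the closed semipositive form $\alpha_F^{\,k}\wedge\gamma_F^{\,m-1}$ to move $\tfrac12\ddc|g|^2$ onto $\eta_r$, you dominate $\pm\ddc\eta_r$ by $C_\chi r^{-2}\alpha_F$ on $U_r$ via the Cauchy--Schwarz inequality $i\,\partial|h|^2\wedge\bar\partial|h|^2\le 2|h|^2\alpha_F$, and you bound $|g|^2$ on $U_r$ using the cone. The only difference is in the last bound: you get $|g|\le c^{-1}|h|$ on $U_r$ directly from Lemma~\ref{lem:pi}, correctly handling points with $|F|<R_0$ via $|g|<R_0\le r<|h|$, whereas the paper uses $|g|\le|F|\le c^{-1}|h|+R_0$ to get $|g|^2\le 6c^{-2}r^2$; your route gives the slightly better constant $C_\chi c^{-2}$ in place of $3C_\chi c^{-2}$.
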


\begin{proof}
All forms below are pullbacks under $F$ of smooth forms on $\C^{N}$, so they are
smooth on $\Sigma$, and integrals over $\Sigma$ count multiplicity automatically.  Put
$\Theta=\alpha_{F}^{\,k}\wedge\gamma_{F}^{\,m-1}$, a closed semipositive
$(q-1,q-1)$-form.  If $\phi$ is a compactly supported smooth function and $u$ a smooth
real function on $\Sigma$, then
$d\phi\wedge d^cu\wedge\Theta=du\wedge d^c\phi\wedge\Theta$, since only the
$(1,1)$-parts of $d\phi\wedge d^cu$ and $du\wedge d^c\phi$ contribute and these are equal.  Stokes'
theorem therefore gives
$\int_\Sigma\phi\,\ddc u\wedge\Theta=\int_\Sigma u\,\ddc\phi\wedge\Theta$.  Applying
this with $\phi=\eta_r$ and $u=|g|^2$, and using $\gamma_{F}=\frac12\ddc|g|^{2}$,
\begin{equation}\label{eq:ibp}
   \int_{\Sigma}\eta_{r}\,\alpha_{F}^{\,k}\wedge\gamma_{F}^{\,m}
   =\frac12\int_{U_r}|g|^{2}\ \ddc\eta_{r}\wedge\Theta .
\end{equation}
Two estimates finish the proof.

\smallskip\noindent
\emph{The cone bound.}  This is where the hypothesis
$F(\Sigma)\subset\Slab{\varepsilon}{K}$ enters, through \eqref{eq:sandwich}.  On $U_r$
we have $|h|<\sqrt2\,r$, so \eqref{eq:sandwich} and $R_{0}\le r\le c^{-1}r$ give
\begin{equation}\label{eq:conebound}
   |g|^{2}\le |F|^{2}\le \big(c^{-1}\sqrt2\,r+R_{0}\big)^{2}
   \le \big(1+\sqrt 2\big)^{2}c^{-2}r^{2}\le 6\,c^{-2}r^{2} .
\end{equation}

\smallskip\noindent
\emph{The cutoff bound.}  Writing $t=|h|^{2}/r^{2}$,
\[
   \ddc \eta_{r}=\chi'(t)\,\frac{\ddc|h|^{2}}{r^{2}}
   +\chi''(t)\,\frac{d|h|^{2}\wedge d^{c}|h|^{2}}{r^{4}} .
\]
Since $h$ is holomorphic, $\partial|h|^{2}=\sum_{j}\bar h_{j}\,dh_{j}$ and
$\ddc|h|^{2}=i\sum_{j}dh_{j}\wedge d\bar h_{j}$.  For every $(1,0)$-vector $\xi$, the
Cauchy--Schwarz inequality
$\big|\sum_{j}\bar h_{j}\,dh_{j}(\xi)\big|^{2}\le |h|^{2}\sum_{j}|dh_{j}(\xi)|^{2}$
therefore gives the inequality of semipositive $(1,1)$-forms
\begin{equation}\label{eq:CS}
   d|h|^{2}\wedge d^{c}|h|^{2}=i\,\partial |h|^{2}\wedge\bar\partial|h|^{2}
   \ \le\ |h|^{2}\ \ddc |h|^{2}=2|h|^{2}\alpha_{F} .
\end{equation}
Since $|h|^{2}\le 2r^{2}$ on $U_r$ and $\ddc\eta_r=0$ outside $U_r$, \eqref{eq:CS}
yields, on all of $\Sigma$,
\begin{equation}\label{eq:cutoffbound}
   \pm\,\ddc\eta_{r}\ \le\ \frac{C_{\chi}}{r^{2}}\,\alpha_{F} .
\end{equation}

\smallskip\noindent
\emph{Conclusion.}  Wedging the inequality \eqref{eq:cutoffbound} of $(1,1)$-forms with
the semipositive form $\Theta$ preserves it, and $|g|^2\ge0$.  Since $\eta_r\ge0$,
$\eta_r=1$ on $\{|h|<r\}$ and $\alpha_F^{\,k}\wedge\gamma_F^{\,m}\ge0$,
\eqref{eq:ibp}, \eqref{eq:conebound} and \eqref{eq:cutoffbound} give
\begin{align*}
   \int_{\{|h|<r\}}\alpha_{F}^{\,k}\wedge\gamma_{F}^{\,m}
   &\le\int_{\Sigma}\eta_{r}\,\alpha_{F}^{\,k}\wedge\gamma_{F}^{\,m}
   \le\frac12\cdot 6\,c^{-2}r^{2}\cdot\frac{C_\chi}{r^{2}}
      \int_{U_r}\alpha_{F}^{\,k+1}\wedge\gamma_{F}^{\,m-1}\\
   &\le 3\,C_\chi\,c^{-2}\int_{\{|h|<\sqrt2\,r\}}\alpha_{F}^{\,k+1}\wedge\gamma_{F}^{\,m-1}.
\end{align*}
\end{proof}

\begin{proposition}[Euclidean volume growth]\label{prop:vol}
Suppose that $F(\Sigma)\subset\Slab{\varepsilon}{K}$.  With $c$ and $R_{0}$ as in
Lemma \ref{lem:pi},     and
\begin{equation}\label{eq:C1}
   C_{1}=\omega_{2q}\big(1+2^{q} \, 3 \, C_{\chi} \,  c^{-2}\big)^{q},
\end{equation}
we have for all $R\ge R_{1}\ge R_{0}$
\begin{equation}\label{eq:volgrowth}
   V_{F}(R)\ \le\ \Vol\{|h|<R\}\ \le\ C_{1}\,D\,R^{2q}
   \ \le\ \frac{C_{1}}{\omega_{2q}}\ \frac{R^{2q}}{R_{1}^{2q}}\ V_{F}( 2 \, c^{-1} \, R_{1}).
\end{equation}
\end{proposition}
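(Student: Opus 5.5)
The plan is to prove the three inequalities in \eqref{eq:volgrowth} from left to right.

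\emph{First inequality.} Since $|h|\le|F|$ by \eqref{eq:sandwich}, we have $F^{-1}(B_R)\subset\{|h|<R\}$. Hence $V_F(R)\le\Vol\{|h|<R\}$.

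\emph{Middle inequality.} By Wirtinger \eqref{eq:wirtinger} and the binomial expansion of $\beta_F^{\,q}$,
\[
\Vol\{|h|<R\}=\frac1{q!}\sum_{m=0}^q\binom qm\int_{\{|h|<R\}}\alpha_F^{\,q-m}\wedge\gamma_F^{\,m}.
\]
For each $m\ge1$, I would apply Lemma~\ref{lem:transfer} $m$ times in succession, with radii $R,\sqrt2R,\dots,2^{(m-1)/2}R$, all of which are $\ge R_0$. Each application trades one factor of $\gamma_F$ for one factor of $\alpha_F$ at the cost of $3C_\chi c^{-2}$ and enlarges the radius by $\sqrt2$. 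This gives
\[
\int_{\{|h|<R\}}\alpha_F^{\,q-m}\wedge\gamma_F^{\,m}\le(3C_\chi c^{-2})^m\int_{\{|h|<2^{m/2}R\}}\alpha_F^{\,q}.
\]
The degree identity \eqref{eq:degree} evaluates the right-hand side exactly: it equals $(3C_\chi c^{-2})^m\,q!\,D\,\omega_{2q}\,2^{mq}R^{2q}$. Summing over $m$ then gives
\[
\Vol\{|h|<R\}\le D\,\omega_{2q}R^{2q}\sum_m\binom qm\bigl(2^q\,3C_\chi c^{-2}\bigr)^m=D\,\omega_{2q}\bigl(1+2^q3C_\chi c^{-2}\bigr)^qR^{2q},
\]
using $2^{mq}\le(2^q)^m$. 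This is exactly $C_1DR^{2q}$.

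\emph{Last inequality.} This is the degree bound \eqref{eq:degreebound} applied at radius $R_1\ge R_0$, namely $D\le V_F(2c^{-1}R_1)/(\omega_{2q}R_1^{2q})$, multiplied by $C_1R^{2q}$.

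The main obstacle is the bookkeeping in the iterated transfer. One has to check that each intermediate radius stays $\ge R_0$, which holds since the radii increase from $R\ge R_0$. One also has to track how the constant grows with the radius factor $2^{m/2}$, whose $2q$-th power is $2^{mq}$, so that it matches \eqref{eq:C1}. Compactness of the relevant sublevel sets of $|h|$ is already supplied by Corollary~\ref{cor:proper}, so the transfer lemma applies at every scale.
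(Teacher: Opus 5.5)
Your proposal is correct and follows the paper's proof essentially step for step. You use the sandwich bound for the first inequality, then $m$ iterated applications of Lemma~\ref{lem:transfer} at radii $R,\sqrt2R,\dots,2^{(m-1)/2}R$ followed by the degree identity \eqref{eq:degree} and the binomial sum for the middle one, and finally the degree bound \eqref{eq:degreebound} for the last. The only quibble is that $2^{mq}=(2^q)^m$ holds with equality, so no inequality is needed there.
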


\begin{proof}
Set $A_{k,m}(r)=\int_{\{|h|<r\}}\alpha_{F}^{\,k}\wedge\gamma_{F}^{\,m}\ge 0$ for
$k+m=q$.  Let $r\ge R_0$.  Applying Lemma \ref{lem:transfer} $m$ times, at the radii
$r,\sqrt2\,r,\dots,2^{(m-1)/2}r$, all of which are at least $R_0$, and then the degree
identity \eqref{eq:degree} at the radius $2^{m/2}r$ gives
\begin{equation}\label{eq:iterate}
   A_{q-m,m}(r)\ \le\ \big( 3 \, C_{\chi} \, c^{-2}\big)^{m}A_{q,0}\big(2^{m/2}r\big)
   \ =\ \big(  3 \, C_{\chi} \, c^{-2}\big)^{m}\,q!\,D\,\omega_{2q}\,2^{mq}\,r^{2q}
   \qquad(0\le m\le q).
\end{equation}
Expanding $\beta_{F}^{\,q}$ as above and using \eqref{eq:wirtinger} and
\eqref{eq:iterate},
\begin{equation}\label{eq:sum}
   \Vol\{|h|<r\}=\frac{1}{q!}\sum_{m=0}^{q}\binom{q}{m}A_{q-m,m}(r)
   \le D\,\omega_{2q}\,r^{2q}\sum_{m=0}^{q}\binom{q}{m}
   \big(2^{q} \, 3 \, C_{\chi} \,  c^{-2}\big)^{m}
   = C_{1}\,D\,r^{2q} .
\end{equation}
The first inequality in \eqref{eq:volgrowth} holds because
$F^{-1}(B_R)\subset\{|h|<R\}$ by \eqref{eq:sandwich}, the second is \eqref{eq:sum}
with $r=R$, and the third is the degree bound \eqref{eq:degreebound}.
\end{proof}

\begin{proof}[Proof of (ii) in Theorem \ref{t:higher-dimensional}]
If $\tan\frac{\theta_{q}}{2}\le\varepsilon<\cot\frac{\theta_{q}}{2}$, then
Lemma \ref{lem:Lplus}(b) gives $L_{-}\subset\Cone{\varepsilon}$, so the inclusion of the
complex $q$-plane $L_-$ is such a map.  Now let $F$ be any such map.  By
Proposition \ref{prop:vol} with $R_1=R_0$, we have $V_{F}(R)\le C_1D\,R^{2q}$ for all
$R\ge R_0$, so $F$ has Euclidean volume growth.  Since $F$ is proper, the proper mapping
theorem (Theorem 8.8 in \cite{Dem}) shows that $F(\Sigma)$ is a closed analytic subset
of $\C^N$ of pure dimension $q$, and its $2q$-dimensional volume in $B_R$ is at most
$V_F(R)$.  By the theorem of Bishop and Stoll \cite{Bishop,S}, $F(\Sigma)$ is an affine
algebraic variety.
\end{proof}

\end{document}